\documentclass[12pt,centertags]{amsart}
\usepackage{amsmath,amstext,amsthm,a4,amssymb,amscd}
\usepackage[mathscr]{eucal}
\usepackage{mathrsfs}
\usepackage{epsf}
\textwidth 15.8cm
\textheight 23.5cm
\topmargin -0cm
\oddsidemargin 0.1cm
\evensidemargin 0.1cm
\parskip 0.0cm

\numberwithin{equation}{section}

\def\mE{\mathcal{E}}
\def\mF{\mathcal{F}}

\def\mM{\mathcal{M}}
\def\mN{\mathcal{N}}

\newtheorem{thm}{Theorem}[section]
\newtheorem{lemma}[thm]{Lemma}
\newtheorem{prop}[thm]{Proposition}
\newtheorem{cor}[thm]{Corollary}
\theoremstyle{definition}
\newtheorem{rem}[thm]{Remark}
\theoremstyle{definition}
\newtheorem{ex}[thm]{Example}
\theoremstyle{definition}
\newtheorem{defn}[thm]{Definition}
\newcommand{\be}{\begin{eqnarray}}
\newcommand{\ee}{\end{eqnarray}}

\newcommand{\comment}[1]{}

\begin{document}

\title{Positive scalar curvature on foliations}
 
\author{Weiping Zhang}

\address{Chern Institute of Mathematics \& LPMC, Nankai
University, Tianjin 300071, P.R. China\\  
{\it and}\\
Center for Applied Mathematics,
Tianjin University,
Tianjin 300072,
P. R. China }
\email{weiping@nankai.edu.cn}

\begin{abstract}  We generalize classical theorems due to Lichnerowicz and Hitchin on the existence of Riemannian metrics of  positive scalar curvature on spin manifolds to the case of foliated spin manifolds.  As a consequence,  we show that  there is no foliation  of positive leafwise scalar curvature on any torus, which generalizes the famous theorem of Schoen-Yau and Gromov-Lawson on the non-existence of metrics of positive scalar curvature on torus  to the case of foliations. 
￼
Moreover, our method, which is partly inspired by the analytic localization techniques of Bismut-Lebeau,  
also applies to give a new proof of the celebrated Connes vanishing theorem without using noncommutative geometry. 
￼
\end{abstract}

\maketitle
\tableofcontents

\setcounter{section}{-1}

\section{Introduction} \label{s0}

It has been an important subject in differential  geometry  to study when a smooth manifold carries a Riemannian metric of positive scalar curvature (cf. \cite[Chap. IV]{LaMi89} and \cite{Gr96}).  In this paper, we   study related problems on foliations.

Let $F $ be an
integrable subbundle of the tangent vector bundle $TM$ of a smooth manifold $M$. For any
  Euclidean metric $g^F$ on $F$, let
$k^F\in C^\infty(M)$, which will be called the leafwise scalar curvature associated to $g^F$, be defined as follows: 
  for any $x\in M$,   the integrable subbundle
$F$ determines a leaf ${\mathcal F}_x$ passing through $x$ such
that $F|_{{\mathcal F}_x}=T{\mathcal F}_x$. Then, $g^F$ determines
a Riemannian metric on ${\mathcal F}_x$. Let $k^{{\mathcal F}_x}$
denote the scalar curvature of this Riemannian metric. We
define 
\begin{align}\label{001}
k^F(x)=k^{{\mathcal F}_x}(x).
\end{align}

For a closed spin manifold $M$, let
$\widehat{\mathcal A}(M)$ be the canonical $KO$-characteristic number of $M$ defined by that if $\dim M=8k+4i$
with $i=0$ or $1$, then $\widehat{\mathcal
A}(M)=\frac{3+(-1)^{i}}{4}\widehat{A}(M)$;\footnote{Cf. \cite[pp. 13]{Z01} for a
definition of the Hirzebruch $\widehat A$-genus $\widehat A(M)$.}  if $\dim M=8k+i$ with
$i=1$ or $2$, then $\widehat{\mathcal A}(M)\in {\bf Z}_2$ is the
Atiyah-Milnor-Singer $\alpha$ invariant;\footnote{Cf. \cite[\S
2.7]{LaMi89} for a definition.} while in other dimensions one
takes  $\widehat{\mathcal A}(M)=0$.

The main result of this paper can be stated as follows.

\begin{thm}\label{t0.2} Let $F $ be an  integrable subbundle  of the
tangent  bundle   of   a closed spin manifold $M$. If   $F$  
carries a metric of positive leafwise scalar curvature,  then
$\widehat{\mathcal A}(M)=0$.
\end{thm}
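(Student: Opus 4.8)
The plan is to attach to the foliation a Dirac-type operator whose (possibly $KO$-valued) index is $\widehat{\mathcal A}(M)$ and to show, using the leafwise positivity, that its kernel vanishes; the two facts together force $\widehat{\mathcal A}(M)=0$. Fix a metric $g^F$ of positive leafwise scalar curvature on $F$, a complement $F^\perp\subset TM$ with a metric $g^{F^\perp}$, and put $g^{TM}=g^F\oplus g^{F^\perp}$; by compactness $\mu_0:=\inf_M k^F>0$. Assume first $\dim M$ even, so (as $w_2(TM)=0$) the spinor bundle $S(TM)$ exists. On $S(TM)$ I would use not the Atiyah--Singer operator of the Levi--Civita connection but the Dirac operator $D$ built from the \emph{split connection} $\nabla=\nabla^F\oplus\nabla^{F^\perp}$, where $\nabla^F$ is a metric connection on $F$ restricting to the leafwise Levi--Civita connection on every leaf (for instance the tangential projection of $\nabla^{g^{TM}}$) and $\nabla^{F^\perp}$ is a metric connection on $F^\perp$. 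Since the index of a Dirac operator on the Clifford module $S(TM)$ is independent of the chosen Clifford connection, $\Ind D=\langle\widehat A(TM),[M]\rangle=\widehat A(M)$, which is $\widehat{\mathcal A}(M)$ when $\dim M\equiv 0\ (\mathrm{mod}\ 8)$ and a multiple of it when $\dim M\equiv 4\ (\mathrm{mod}\ 8)$.

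The analytic heart is the Weitzenb\"ock--Lichnerowicz formula for $D$. The split connection has torsion $T$ --- torsion-free along the leaves, by integrability of $F$, but with transverse components encoding the second fundamental form of the leaves and the non-integrability of $F^\perp$ --- so the formula reads schematically $D^{2}=\nabla^{*}\nabla+\tfrac14 k^{F}+\mathcal R$, where $\mathcal R$ is a bundle endomorphism built from $T$, its covariant derivative, and the curvature of $\nabla^{F^\perp}$. The goal is to make $\mathcal R$ negligible. For this I would introduce the adiabatic family $g_\varepsilon=g^F\oplus\varepsilon^{-2}g^{F^\perp}$ with its Dirac operators $D_\varepsilon$; a bookkeeping of $\varepsilon$-powers shows that the purely transverse curvature contribution and the parts of $\mathcal R$ carrying the non-integrability of $F^\perp$ are $O(\varepsilon)$, so they drop out in the limit while the leafwise term $\tfrac14 k^{F}$ is untouched.

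Here is the main obstacle. The pieces of $\mathcal R$ produced by the second fundamental form of the leaves --- equivalently, by the deviation of $\nabla^{F^\perp}$ from the Bott connection on $TM/F$, a deviation that cannot be removed unless the foliation is Riemannian --- are \emph{not} small in the adiabatic limit; indeed $k^{g_\varepsilon}$ itself tends to $-\infty$ at points where the leaves fail to be minimal, so brute-force Lichnerowicz on $(M,g_\varepsilon)$ cannot work. This is precisely the metric-versus-Bott tension behind Connes' theorem, and overcoming it is where the real effort must go. I expect the resolution to combine two ideas: (i) a judicious choice of $\nabla^F$ and $\nabla^{F^\perp}$ making the scalar and the Clifford-cubic parts of the dangerous terms cancel identically --- a mechanical but delicate computation in the spirit of the torsion Lichnerowicz formulas used in adiabatic-limit geometry --- so that $\mathcal R$ shrinks to $O(\varepsilon)$; and/or (ii) a further deformation of $D_\varepsilon$ that localizes the problem in the transverse directions near the leaves, in the manner of Bismut--Lebeau, producing a positive transverse model operator of harmonic-oscillator type which dominates whatever remains. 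Granting this, $\|D_\varepsilon s\|^{2}\ge(\tfrac14\mu_0-C\varepsilon)\|s\|^{2}$ for all $s$ and all small $\varepsilon$, hence $\Ker D_\varepsilon=0$, hence $\widehat A(M)=\Ind D_\varepsilon=0$; thus $\widehat{\mathcal A}(M)=0$ in the dimensions $\equiv 0,4\ (\mathrm{mod}\ 8)$.

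It remains to deal with $\dim M\equiv 1,2\ (\mathrm{mod}\ 8)$, where $\widehat{\mathcal A}(M)$ is the ${\bf Z}_2$-valued $\alpha$-invariant (in all other dimensions $\widehat{\mathcal A}(M)=0$ by definition, so nothing is needed). There I would run the same construction with the $\mathrm{Cl}_n$-linear Dirac operator ($n=\dim M$) of Atiyah--Bott--Shapiro and Hitchin: the Weitzenb\"ock estimate above is insensitive to the real/quaternionic refinement, so the kernel of the $\mathrm{Cl}_n$-linear deformation of $D_\varepsilon$ still vanishes for small $\varepsilon$, and its $KO_n(\mathrm{pt})$-valued index, which by the Clifford-linear index theorem equals $\widehat{\mathcal A}(M)$, therefore vanishes too.
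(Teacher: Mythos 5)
Your proposal correctly identifies the central obstruction --- the deviation of $\nabla^{F^\perp}$ from the Bott connection, i.e., the second fundamental form of the leaves, produces terms in the Lichnerowicz formula that blow up (rather than vanish) in the naive adiabatic limit on $M$, unless the foliation happens to be Riemannian --- but it stops short of resolving it, and the two mechanisms you gesture at cannot work on $M$ itself. Mechanism (i), a ``judicious choice of connections making the dangerous terms cancel identically,'' would in effect require $(M,F)$ to carry a transverse Riemannian structure; whether $k^F>0$ forces the existence of a positive scalar curvature metric on $M$ is precisely the open question the paper records in Remark \ref{t002}, so (i) cannot be the basis of a proof in general. Mechanism (ii), a Bismut--Lebeau deformation that ``localizes in the transverse directions near the leaves,'' has no meaning on $M$, because the leaves foliate all of $M$: there is no compact exhaustion transverse to the foliation to localize to, and there is no model operator of harmonic-oscillator type in sight.

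The missing idea is Connes' fibration $\pi:\mM\to M$, whose fiber over $x$ is the (noncompact) symmetric space of Euclidean metrics on $T_xM/F_x$. Lifting $F$ by the Bott connection to an integrable subbundle $\mF\subset T\mM$, one obtains a foliation that \emph{is} almost isometric in the sense of Definition \ref{t1.1} (together with Condition (C)); only for such foliations does the two-parameter rescaling $g^{T\mM}_{\beta,\varepsilon}$ give the clean estimate $k^{T\mM,\beta,\varepsilon}=\beta^{-2}k^{\mF}+O(1+\varepsilon^2/\beta^2)$ of Proposition \ref{t12.1}, which is exactly what your step ``the transverse curvature contribution is $O(\varepsilon)$'' requires but cannot justify on $M$. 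The price is that $\mM$ is noncompact, which forces the rest of the paper's work: the sub-Dirac operator $D_{\mF\oplus\mF_1^\perp,\beta,\varepsilon}$ of Section \ref{s1.4} (so that only $TM$, not $\mM$, need be spin); the deformation by $\beta^{-1}f(\rho/R)\widehat c(\sigma)$ using the fiberwise radial vector field $\sigma$ and the estimates of Lemma \ref{t2.1}, which together give the invertibility (\ref{4.1})--(\ref{4.2}) on compact pieces $\mM_R$; and the Braverman/Ma--Zhang device of Section \ref{s2.4}, which glues $\mM_R$ into a closed manifold $\widetilde\mN_R$, builds a zeroth-order pseudodifferential operator whose index equals $\widehat A(M)$, and deforms it to an invertible one. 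Your treatment of the mod-$2$ cases via the Clifford-linear operator is in the right spirit, but in the paper it is again carried out on the Connes fibration with the skew-adjoint modification $\widehat\tau\,\tau_{\beta,\varepsilon}D_{\mF\oplus\mF_1^\perp,\beta,\varepsilon}$ and a mod-$2$ index computation there. In short: the estimate you need to kill $\mathcal R$ is simply false on $(M,g_\varepsilon)$, and supplying the Connes fibration, the almost-isometric structure, and the attendant noncompact analysis is the actual content of the proof, not a routine computation.
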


When   $F=TM$, one recovers the classical theorems due to  Lichnerowicz \cite{L63} (for the case of $\dim M=4k$)  and Hitchin \cite{Hi74} (for the cases of $\dim M=8k+1$ and $8k+2$).

\begin{ex}   
Take 
any $8k + 1$ dimensional closed spin manifold $M$ such that $\widehat {\mathcal A}(M)\neq 0$. By a result of Thurston \cite{T}, there always exists a codimension one foliation on $M$. However, by our result, there is no metric  of positive leafwise scalar curvature
on the associated integrable subbundle of $TM$.
\end{ex}

\begin{rem}\label{t002}
It  is a longstanding open question in foliation theory (cf. \cite[Remark
C14]{MS88}) that whether the existence of $g^F$ with $k^F>0$
implies the existence of $g^{TM}$ with $k^{TM}>0$.    This question admits an easy positive answer in the case where
$(M,F) $ carries a transverse  Riemannian structure (when such a transverse Riemannian structure exists,   $(M,F)$ is called a Riemannian foliation).  An approach  to this question for
codimension one foliations is outlined in the long paper of Gromov \cite[page 193]{Gr96}.
\end{rem}

Combining Theorem \ref{t0.2} with the well-known  results  of
Gromov-Lawson 
\cite{GL80a} 
and Stolz \cite{S92}, one gets the following consequence which provides a positive answer to the above question for simply connected manifolds of dimension greater than or equal to five.  

\begin{cor}\label{t0.6}  Let $F$ be an   integrable subbundle of the tangent bundle of a closed simply connected
manifold $M$ with $\dim M\geq 5$. If  $F$  carries a metric   of positive leafwise scalar curvature,  then $M$ admits  a Riemannian
metric of positive scalar curvature.
\end{cor}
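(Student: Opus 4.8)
The plan is to feed the conclusion of Theorem \ref{t0.2} into the known classification of positive scalar curvature metrics on simply connected manifolds, splitting into two cases according to whether $M$ is spin. Since $M$ is simply connected we have $H^1(M;\Z_2)=0$, hence $w_1(M)=0$ and $M$ is orientable; thus exactly one of the following holds: either $M$ is spin, or $w_2(M)\neq 0$ and $M$ is totally nonspin. I would treat the two cases separately, and observe at the outset that the leafwise hypothesis will be used only in the spin case.

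Suppose first that $M$ is spin. Since $F$ carries a metric of positive leafwise scalar curvature, Theorem \ref{t0.2} gives $\widehat{\mathcal A}(M)=0$. The next step is to identify this vanishing with the vanishing of the full $KO$-valued index obstruction $\alpha(M)\in KO_{n}(\mathrm{pt})$, $n=\dim M$: when $n\not\equiv 0\pmod 4$ and $n\not\equiv 1,2\pmod 8$ the group $KO_n(\mathrm{pt})$ vanishes and there is nothing to check; when $n\equiv 0\pmod 8$ one has $\alpha(M)=\widehat A(M)$, when $n\equiv 4\pmod 8$ one has $\alpha(M)=\tfrac12\widehat A(M)\in\Z$, and when $n\equiv 1,2\pmod 8$ one has $\alpha(M)=\widehat{\mathcal A}(M)\in\Z_2$ by definition — so in every case $\widehat{\mathcal A}(M)=0$ is equivalent to $\alpha(M)=0$. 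Then I would invoke Stolz's theorem \cite{S92}, the resolution of the Gromov-Lawson-Rosenberg conjecture for closed simply connected manifolds, which asserts that a closed simply connected spin manifold of dimension $\geq 5$ with $\alpha(M)=0$ admits a Riemannian metric of positive scalar curvature. Hence $M$ does.

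Suppose now that $M$ is not spin. Here no foliation input is needed: by the theorem of Gromov-Lawson \cite{GL80a}, every closed simply connected manifold of dimension $\geq 5$ that is not spin carries a Riemannian metric of positive scalar curvature. Combining the two cases proves the corollary. I do not expect a serious obstacle in this argument: all of the analytic content sits in Theorem \ref{t0.2}, and what remains is bookkeeping — matching the $KO$-theoretic normalisation of $\alpha(M)$ with the definition of $\widehat{\mathcal A}(M)$ recalled in the introduction, and noting that the hypothesis $\dim M\geq 5$ (together with simple connectivity) is exactly what makes the surgery arguments of Gromov-Lawson and Stolz applicable. The only point that requires a line of care is this identification of $\widehat{\mathcal A}(M)$ with $\alpha(M)$ in the quaternionic dimensions $n\equiv 4\pmod 8$, where the factor $\tfrac12$ in the definition of $\widehat{\mathcal A}$ must be accounted for.
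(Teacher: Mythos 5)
Your proposal is correct and matches the paper's intended argument: the paper gives no detailed proof beyond the phrase ``combining Theorem~\ref{t0.2} with the well-known results of Gromov-Lawson \cite{GL80a} and Stolz \cite{S92},'' and your case split (non-spin handled by Gromov-Lawson, spin handled by Theorem~\ref{t0.2} plus Stolz after identifying $\widehat{\mathcal A}(M)=0$ with $\alpha(M)=0$) is exactly that combination, carefully spelled out. The normalization check in dimensions $\equiv 4 \pmod 8$ is the right thing to flag and you resolve it correctly.
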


For non-simply connected manifolds,  recall that a famous result due to Schoen-Yau \cite{SY79} and Gromov-Lawson \cite{GL80}    states that there is no metrics of positive scalar curvature on any torus. 
By combining Theorem \ref{t0.2} with the techniques
of Lusztig \cite{Lu71} and Gromov-Lawson \cite{GL80}, one obtains
the following   generalization to   the case
of foliations.

 \begin{cor}\label{t0.8}  There exists no foliation $(T^n,F)$ on any torus $T^n$ such
 that the integrable subbundle $F$ of $T(T^n)$ carries a metric of positive leafwise scalar curvature.
\end{cor}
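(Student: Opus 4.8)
The plan is to carry the Lusztig--Gromov--Lawson argument against positive scalar curvature on tori into the foliated setting, feeding a family of flat line bundles into a twisted form of Theorem~\ref{t0.2}. Suppose, for contradiction, that $(T^n,F)$ carries a leafwise metric $g^F$ with $k^F>0$. Let $\widehat{T}^n=H^1(T^n;\R)/H^1(T^n;\Z)$ be the dual torus and let $\mathcal{L}\to T^n\times\widehat{T}^n$ be the Poincar\'e line bundle, normalized so that $\mathcal{L}|_{T^n\times\{\beta\}}$ is the flat Hermitian line bundle $L_\beta$ determined by $\beta$. Put $\widehat{M}=T^n\times\widehat{T}^n$ and $E=\mathcal{L}$; if $\dim\widehat{M}=2n\not\equiv 0\bmod 4$ (i.e.\ $n$ odd) replace $\widehat{M}$ by $\widehat{M}\times T^2$ and $E$ by $\mathcal{L}\boxtimes L'$ with $L'$ a line bundle of degree one on $T^2$. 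In all cases $\widehat{M}$ is a closed spin manifold with $\dim\widehat{M}\equiv 0\bmod 4$ and trivial tangent bundle, so that $\widehat{A}(T\widehat{M})=1$.

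On $\widehat{M}$ consider the subbundle $\widehat{F}:=F\oplus 0\subset T\widehat{M}$, whose leaves are the products $\mathcal{F}_x\times\{\mathrm{pt}\}$; these are honest submanifolds, so $\widehat{F}$ is integrable, and the leafwise metric $g^{\widehat{F}}$ induced from $g^F$ makes each leaf isometric to $(\mathcal{F}_x,g^F)$. Hence $k^{\widehat{F}}=k^F>0$: the foliation $(\widehat{M},\widehat{F})$ again has positive leafwise scalar curvature. The crucial point is that the canonical Hermitian connection on $E$ has curvature vanishing on $\widehat{F}$: restricted to any leaf $\mathcal{F}_x\times\{\mathrm{pt}\}$, which sits inside $T^n\times\{\mathrm{pt}\}$, the bundle $E$ is pulled back from the flat bundle $L_\beta$ and is therefore flat. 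Thus $E$ is flat \emph{along the leaves} of $\widehat{F}$, even though it is not flat on $\widehat{M}$.

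Next I would establish the twisted version of Theorem~\ref{t0.2} that the argument needs: if $(M,F)$ is as in that theorem with $\dim M\equiv 0\bmod 4$, and $(E,\nabla^E)$ is a Hermitian bundle whose curvature restricts to $0$ on $F$, then the existence of a metric of positive leafwise scalar curvature forces $\langle\widehat{A}(TM)\,\ch(E),[M]\rangle=0$. I expect the proof of Theorem~\ref{t0.2} to go through essentially unchanged for the leafwise Dirac operator twisted by $E$: its Lichnerowicz--Weitzenb\"ock identity acquires only the term $k^F/4$ together with the Clifford action of the curvature of $E$ along $F$, which vanishes here, so the twisted leafwise operator stays positive, while the analytic localization computing its index merely picks up the extra factor $\ch(E)$. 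Applying this to $\widehat{M}$, $\widehat{F}$ and $E$, and using $\widehat{A}(T\widehat{M})=1$, we obtain $\int_{\widehat{M}}\ch(E)=0$.

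Finally one computes this integral following Lusztig \cite{Lu71}. Choosing dual integral bases $e_1,\dots,e_n$ of $H^1(T^n)$ and $f_1,\dots,f_n$ of $H^1(\widehat{T}^n)$, the Poincar\'e bundle satisfies $c_1(\mathcal{L})=\sum_{i=1}^n e_i\smile f_i$; since $e_i^2=f_i^2=0$, the top-degree component of $\ch(\mathcal{L})=e^{c_1(\mathcal{L})}$ equals $\tfrac{1}{n!}\bigl(\sum_i e_i f_i\bigr)^n=\pm(e_1\cdots e_n)\otimes(f_1\cdots f_n)$, so $\int_{T^n\times\widehat{T}^n}\ch(\mathcal{L})=\pm 1$ (and the optional $T^2$ factor contributes $\int_{T^2}c_1(L')=1$). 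Hence $\int_{\widehat{M}}\ch(E)=\pm 1\neq 0$, contradicting the previous paragraph; this contradiction proves the corollary, the passage from $T^n$ to $\widehat{M}$ and the Poincar\'e-bundle bookkeeping being precisely the Gromov--Lawson adaptation \cite{GL80} of Lusztig's method. The one genuinely new ingredient, and the main thing to be checked, is the twisted form of Theorem~\ref{t0.2}, i.e.\ that the localization argument underlying it is insensitive to twisting by a bundle that is flat in the leaf directions.
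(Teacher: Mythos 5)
Your proposal is correct and fleshes out exactly what the paper intends by its one-line explanation that Corollary~\ref{t0.8} ``follows by combining Theorem~\ref{t0.2} with the techniques of Lusztig and Gromov--Lawson'': one passes to the product with the dual torus, twists by the Poincar\'e bundle, and invokes a twisted version of Theorem~\ref{t0.2}. Since the paper gives no further detail, the comparison is really with the implicit proof, and your reconstruction matches it.

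The one point you flag as ``the main thing to be checked'' --- the twisted form of Theorem~\ref{t0.2} for a Hermitian bundle $E$ whose curvature vanishes along $F$ --- does go through by the paper's mechanism, and it is worth being explicit about why, since your phrasing (``leafwise Dirac operator'') does not match the operator the paper actually uses. The relevant operator is the sub-Dirac operator on the (noncompact) Connes fibration $\pi:\mM\to \widehat{M}$, twisted by $\pi^*E$. The extra term in the Lichnerowicz identity~(\ref{1.47}) is $\tfrac{1}{2}\sum_{a,b}c(v_a)c(v_b)R^{\pi^*E}(v_a,v_b)$ over a $g^{T\mM}_{\beta,\varepsilon}$-orthonormal frame $\{v_a\}$. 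Because $R^{\pi^*E}$ is pulled back from $\widehat M$, it vanishes whenever one argument is vertical, i.e.\ in $\mF_2^\perp$; in the $\mF\times\mF$ block it scales as $\beta^{-2}R^E|_F$, which vanishes by leafwise flatness; and the remaining $\mF\times\mF_1^\perp$ and $\mF_1^\perp\times\mF_1^\perp$ blocks scale as $O(\varepsilon/\beta)$ and $O(\varepsilon^2)$. Thus the twisted version of~(\ref{2.39}) still reads $-\Delta+\tfrac{k^{\mF}}{4\beta^2}+O_R(\tfrac1\beta+\tfrac{\varepsilon^2}{\beta^2})$, Lemma~\ref{t4.1} and the argument of Section~\ref{s2.4} go through verbatim, and the index picks up the factor $\ch(E)$ as you claim. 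Your bookkeeping of $\int_{\widehat M}\ch(E)=\pm1$ is also correct, including the $T^2$ factor when $n$ is odd.
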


If $F$ is further  assumed to be spin,   then Corollaries \ref{t0.6} and \ref{t0.8} can  also be deduced from the following celebrated vanishing theorem of Connes, which provides another kind of generalization 
of the Lichnerowicz theorem \cite{L63} to the case of foliations.

 \begin{thm}\label{t0.1} {\bf (Connes \cite[Theorem 0.2]{Co86})}   Let  $F$ be a spin integrable subbundle of  the tangent   bundle of a  compact
oriented manifold $M$. If   $F$ carries a metric of positive leafwise scalar curvature, 
  then  $\widehat{A}(M)=0$.
\end{thm}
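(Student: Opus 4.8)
The plan is to prove this without invoking noncommutative geometry, using instead an adiabatic-limit / analytic-localization strategy inspired by Bismut–Lebeau, much as in the proof of Theorem 0.2. The starting point is to reduce to the case where $M$ is closed (replace $M$ by a closed manifold carrying the relevant data via a standard surgery/compactification argument) and where $\widehat A(M)$ is detected by a leafwise Dirac operator. Choose a splitting $TM = F \oplus F^{\perp}$ with metric $g^{TM} = g^F \oplus g^{F^{\perp}}$, and for a parameter $\varepsilon > 0$ rescale the transverse directions by setting $g^{TM}_\varepsilon = g^F \oplus \varepsilon^{-2} g^{F^{\perp}}$. The leafwise spin structure gives a spinor bundle $S(F)$ along the leaves; twisting by the transverse exterior algebra bundle $\Lambda^\bullet (F^{\perp})^*$ (or a suitable $\mathbb{Z}_2$-graded auxiliary bundle built from $F^\perp$) produces a globally defined Dirac-type operator $D_\varepsilon$ on $M$ whose index computes $\widehat A(M)$ up to a universal nonzero constant, by the Atiyah–Singer index theorem applied after checking that the symbol is the expected one.

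The key step is a Lichnerowicz-type Bochner identity for $D_\varepsilon^2$ in the adiabatic limit. One computes $D_\varepsilon^2 = \Delta_\varepsilon + \tfrac14 k^F + R_\varepsilon$, where $\Delta_\varepsilon$ is a nonnegative Laplacian-type operator, $\tfrac14 k^F$ is the leafwise scalar curvature term (strictly positive and $\varepsilon$-independent by hypothesis), and $R_\varepsilon$ collects all the remaining curvature contributions — transverse curvature, mixed terms involving the second fundamental form and the integrability of $F$, and the Clifford action of the transverse curvature on $\Lambda^\bullet(F^\perp)^*$. The point is that every term in $R_\varepsilon$ carries a strictly positive power of $\varepsilon$ after the rescaling (the transverse curvature contributes $O(\varepsilon^2)$, the mixed/second-fundamental-form terms $O(\varepsilon)$), so for $\varepsilon$ small enough $\tfrac14 k^F + R_\varepsilon \geq c > 0$ uniformly on $M$. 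Here the integrability of $F$ is essential: it is what kills the would-be $O(1)$ cross term that a non-integrable distribution would produce. Then $D_\varepsilon^2 \geq \Delta_\varepsilon + c > 0$, so $\ker D_\varepsilon = 0$ and $\operatorname{index} D_\varepsilon = 0$, whence $\widehat A(M) = 0$.

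I expect the main obstacle to be the precise bookkeeping in the Bochner formula: verifying that, after the transverse rescaling, there is genuinely no $\varepsilon$-independent obstruction term — in other words, that the Clifford action of the transverse curvature $2$-form on $\Lambda^\bullet(F^{\perp})^*$, together with the O'Neill-type tensors measuring how the leaves sit inside $M$, all appear with positive powers of $\varepsilon$. This requires careful use of the integrability of $F$ (so that $[\,\cdot\,,\cdot\,]$ of leafwise vector fields stays leafwise) and a judicious choice of how to twist by the transverse data so that the twisting curvature is subordinate to the leafwise geometry. A secondary technical point is justifying the identification of $\operatorname{index} D_\varepsilon$ with $\widehat A(M)$ (independent of $\varepsilon$, by homotopy invariance of the index, so it suffices to compute the symbol class), and handling the compact-but-possibly-non-closed / orientable hypothesis by a reduction to the closed case. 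Once the positivity of $R_\varepsilon$'s lower bound is established, the rest is the standard vanishing argument.
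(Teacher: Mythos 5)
Your proposal has the right general flavor — adiabatic rescaling of the transverse metric, a Lichnerowicz formula, and a vanishing argument — but it skips the single most important obstacle, and the step where you claim ``every term in $R_\varepsilon$ carries a strictly positive power of $\varepsilon$'' is false for a general foliation.

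The issue is that integrability of $F$ is necessary but nowhere near sufficient to get the asymptotics you want. What you actually need is that the metric $g^{F^\perp}$ on $TM/F$ be (at least approximately) preserved by the leafwise holonomy — equivalently, that the Bott connection on $TM/F$ preserve $g^{F^\perp}$ — which is the defining property of a \emph{Riemannian} (or, in Connes' sense, almost isometric) foliation. For a general foliation no such transverse metric exists, the holonomy can distort $g^{F^\perp}$ arbitrarily, and the ``holonomy derivative'' terms in the Bochner/scalar-curvature expansion do not decay; in fact the mixed leaf--transverse curvatures contribute at order $O(\varepsilon^{-2})$ rather than $O(\varepsilon)$ or $O(\varepsilon^2)$. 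So the claimed inequality $\tfrac14 k^F + R_\varepsilon \geq c > 0$ simply fails on a generic foliated manifold, and the vanishing argument collapses. This is exactly the ``common difficulty for both Theorems 0.2 and 0.1'' flagged in the introduction: \emph{there might be no transverse Riemannian structure on the underlying foliated manifold}. Your appeal to integrability to kill the $O(1)$ cross term addresses a different issue (second-fundamental-form / O'Neill terms), not the holonomy term, which is the real culprit.

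The paper resolves this by first passing to the Connes fibration $\pi:\mathcal{M}\to M$ whose fibers are the spaces of Euclidean metrics on $T_xM/F_x$; on $\mathcal{M}$ the lifted foliation \emph{is} almost isometric, so the two-parameter rescaling of Section 1.2 yields the clean estimate $k^{T\mathcal{M},\beta,\varepsilon} = k^{\mathcal{F}}/\beta^2 + O(1 + \varepsilon^2/\beta^2)$ of Proposition 1.4. The price is that $\mathcal{M}$ is noncompact, and \emph{that} is where the Bismut--Lebeau ideas actually enter: one deforms the sub-Dirac operator by a term $f(\rho/R)\,\widehat{c}(\sigma)/\beta$ built from the fiberwise distance function (Section 2.2), proves an invertibility estimate on the compact pieces $\mathcal{M}_R$ (Lemma 2.7, which relies on the geometric estimates of Lemma 2.2), and then transfers the vanishing to an index computation on a closed manifold via the Braverman-style pseudodifferential trick. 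You mention Bismut--Lebeau but do not use the Connes fibration and do not deploy any localization, so the proposal cannot be repaired by just tightening the bookkeeping; it needs the fibration (or some substitute) as an essential ingredient, not an optional one.
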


Recall that the proof of Theorem
\ref{t0.1} outlined    in \cite{Co86}      makes use of  noncommutative geometry in an essential way. It is based on the
Connes-Skandalis longitudinal index theorem for foliations
\cite{CoS84} as well as the techniques of cyclic cohomology. Thus
it relies   on the spin structure on $F$, and we do not
see how to adapt it to prove Theorem \ref{t0.2}, where one assumes $TM$ being spin instead. 

On the other hand, while Theorem \ref{t0.2} is different from Connes' result and also covers the cases of $\dim M=8k+1$  and $8k+2$ where the Hirzebruch $\widehat A$-genus vanishes tautologically, a common difficulty  for both Theorems \ref{t0.2} and \ref{t0.1} is  that there might be no transverse Riemannian structure on the underlying foliated manifold. 

To overcome this difficulty, Connes \cite{Co86}  introduces  an important   geometric idea,  which reduces the original problem to  that on a
fibration\footnote{Which will be called a Connes fibration in what
follows.} over the foliation under consideration.  
The key advantage of this fibration is that the lifted (from the original) foliation is
almost isometric, i.e., very close to  Riemannian foliations. On the other hand, however,  this fibration is
noncompact. This makes the proof  of   Theorem 
\ref{t0.1}  in \cite{Co86}, which relies essentially on the noncommutative techniques, highly  nontrivial.

Our proof of Theorem \ref{t0.2} is     differential geometric and does not use  noncommutative geometry. It makes use of the sub-Dirac operators constructed in \cite[\S 2b)]{LZ01} on the Connes fibration, 
 as well as the adiabatic limit computations on foliations also considered  in \cite{LZ01}. 
The key point is that while Connes' noncommutative proof of Theorem \ref{t0.1} relies heavily on the analysis near the (fiberwise) infinity of the associated Connes fibration, our main concern is on a compact subset of the Connes fibration. To be more precise, inspired by \cite{BL91}, \cite{BZ93} and \cite{Co86}, we  introduce a specific   deformation of  the sub-Dirac operator on the Connes fibration    and show that the deformed operator is  ``invertible'' on  certain compact subsets of the Connes fibration  (cf. (\ref{2.151x})   in Section \ref{s2.3} for more details).

Moreover, by modifying the sub-Dirac operators mentioned above (see Section \ref{s1.4} for more details), 
our method   applies to give a purely  geometric proof of  Theorem \ref{t0.1}.  This new proof   provides a positive answer  to a longstanding   question
in index theory (cf. \cite[Page 5 of Lecture 9]{HR}).

We would  like to mention that the idea of constructing sub-Dirac
operators has also been used in \cite{LMZ01} to prove a
generalization of the Atiyah-Hirzebruch vanishing theorem for circle
actions \cite{AH} to the case of foliations.

This paper is organized as follows. In Section 1, we discuss the
case of almost isometric foliations and carry out the local
computations. We also introduce the sub-Dirac operator in this case
and prove Theorem \ref{t0.1} in the case where the underlying
foliation is compact. In Section 2, we work on   noncompact Connes
  fibrations and carry out the proofs of Theorems \ref{t0.2} and
\ref{t0.1}.  We also include some new results in the end of the paper.

\section{Adiabatic limit and almost isometric  foliations}\label{s1}

In this section, we discuss the geometry of almost isometric
foliations in the sense of Connes \cite{Co86}. We introduce for
this kind of foliations a rescaled  metric and show that 
the leafwise scalar curvature shows up
from the limit behavior of the rescaled  scalar curvature.
We also introduce in this setting the sub-Dirac operators inspired
by the original construction given in \cite{LZ01}.
Finally, by combining the above two procedures, we prove a vanishing
result when the almost isometric foliation under discussion is
compact.

This section is organized as follows. In Section \ref{s1.1}, we
recall the definition of the almost isometric foliation in the
sense of Connes. In Section \ref{s1.2} we introduce a rescaling of
the given metric on the almost isometric foliation  and study the
corresponding limit behavior of the scalar curvature. In Section
\ref{s1.3}, we study  Bott type connections on certain bundles
transverse  to the integrable subbundle. In Section \ref{s1.4}, we
construct the required  sub-Dirac operator  and compute the
corresponding Lichnerowicz type formula. In Section \ref{s1.5} we
prove a vanishing result when the almost isometric foliation is
compact and verifies the conditions in Theorem \ref{t0.1}.

\subsection{Almost isometric foliations
}\label{s1.1}

Let $(M,F)$ be a foliated manifold, where $F$ is an integrable
subbundle of the tangent vector bundle $TM$ of a smooth manifold $M$, i.e., for any smooth sections  $X,\ Y\in\Gamma(F)$, one has
\begin{align}\label{1.0}
[X,Y]\in \Gamma(F).
\end{align}

Take a splitting $TM=F\oplus TM/F$. 
Let $p^{TM/F}:TM=F\oplus TM/F\rightarrow TM/F$ be the canonical projection. Following \cite{Bo70}, we define the Bott connection to be any connection $\nabla^{TM/F}$ on $TM/F$ so that for any $X\in\Gamma(F)$ and $U\in\Gamma(TM/F)$, one has
\begin{align}\label{1.0a}
\nabla^{TM/F}_XU=p^{TM/F}[X,U]. 
\end{align} The key property of the Bott connection is that it is leafwise flat, that is, for any $X,\,Y\in \Gamma(F)$, one has (cf. \cite[Lemma 1.14]{Z01})
\begin{align}\label{1.0b}
\left(\nabla^{TM/F}\right)^2(X,Y)=0. 
\end{align}
However, it may happen that $\nabla^{TM/F}$ does not preserve any metric on $ TM/F$.

 Let $G$ be the holonomy groupoid of
$(M,F)$ (cf.
  \cite{W83}).

We make the assumption that
there is a proper subbundle $E$ of $TM/F$ and choose a splitting
\begin{align}\label{1.1} TM/F=E\oplus (TM/F)/E.
\end{align}
Let $q_1$, $q_2$ denote the ranks of $E$ and $(TM/F)/E$
respectively.

\begin{defn}\label{t1.1} {\bf (Connes \cite[Section 4]{Co86})} If there exists a metric $g^{TM/F}$ on
$TM/F$ with its restrictions to $E$ and $(TM/F)/E$ such that the
action of $G$ on $TM/F$ takes the form \begin{align}\label{1.2}
\left(
\begin{array}{cc}
O(q_1) & 0 \\
A & O(q_2)%
\end{array}%
\right),
\end{align}
where $O(q_1)$, $O(q_2)$ are orthogonal matrices of ranks $q_1$,
$q_2$ respectively, and $A$ is a $q_2\times q_1$ matrix, then we say
that $(M,F)$ carries an almost isometric structure.
\end{defn}

Clearly, the existence of the almost isometric structure does not
depend on the splitting (\ref{1.1}).
We assume from now on that $(M,F)$ carries an almost isometric structure as
above.

For simplicity, we denote $E$,
$(TM/F)/E$ by $F^\perp_1$, $F^\perp_2$
   respectively.

Let $g^F$ be a metric on $F$. Let
$g^{F^\perp_1}$, $g^{F^\perp_2}$ be the restrictions of
$g^{TM/F}$ to $F^\perp_1$, $F^\perp_2$.
Let $g^{TM}$ be a metric on $TM$ so that we have the orthogonal
splitting
\begin{align}\label{1.4}
       TM =  F\oplus F^\perp_1\oplus F^\perp_2,\ \ \ \ \ \ \
g^{TM} = g^{F}\oplus g^{F^\perp_1}\oplus
g^{F^\perp_2}.
\end{align}
Let $\nabla^{TM}$ be the Levi-Civita connection associated to
$g^{TM}$.

From the almost isometric condition (\ref{1.2}), one deduces that
for any $X\in \Gamma(F)$, $U_i,\, V_i\in\Gamma(F^\perp_i)$, $i=1,\
2$, the following identities, which may be thought of as infinitesimal versions of (\ref{1.2}),
 hold (cf. \cite[(A.5)]{LZ01}):
\begin{align}\label{1.5} \begin{split}
       \langle [X,U_i],V_i\rangle+\langle U_i,[X,V_i]\rangle =X\langle U_i,V_i\rangle,\\
\langle [X,U_2],U_1\rangle =0.\end{split}
\end{align}
Equivalently,
\begin{align}\label{1.6} \begin{split}
       \left\langle X,\nabla^{TM}_{U_i}V_i + \nabla^{TM}_{ V_i}U_i\right\rangle =0,\\
\left\langle \nabla^{TM}_XU_2 ,U_1\right\rangle+\left\langle
X,\nabla^{TM}_{U_2}U_1\right\rangle =0.\end{split}
\end{align}

In this paper, for simplicity, we also make the following assumption.
This assumption holds by the Connes   fibration to be dealt with in the next section.

\begin{defn}\label{t1.1c}  We call an almost isometric foliation as above verifies Condition ({C}) if  $F_2^\perp$ is also integrable. That is, for any $U_2,\, V_2\in\Gamma(F_2^\perp)$, one has
\begin{align}\label{1.6a}
\left[U_2,V_2\right]\in \Gamma\left(F_2^\perp\right).
\end{align}
\end{defn}

\subsection{Adiabatic limit and the scalar curvature
}\label{s1.2} In this subsection, we study the relationship between the leafwise scalar curvature and the scalar curvature on the total manifold of an almost isometric foliation. 
For convenience, we   recall the   formula for the
Levi-Civita connection (cf. \cite[(1.18)]{BeGeVe}) that for any $X,\ Y,\ Z\in\Gamma(TM)$,
\begin{multline}\label{1.7}
2\left\langle \nabla^{TM}_XY,Z\right\rangle =X\langle Y,Z\rangle
+Y\langle X,Z\rangle-Z\langle X,Y\rangle\\
+\langle [X,Y],Z\rangle -\langle [X,Z],Y\rangle-\langle
[Y,Z],X\rangle.
\end{multline}

 Recall that  by \cite[Proposition A.2]{LZ01}, if one rescales the metric $g^{F_1^\perp}$ to $\frac{1}{\varepsilon^2}g^{F_1^\perp}$ and takes $\varepsilon\rightarrow 0$, then  
the almost isometric foliation in the sense of Definition \ref{t1.1} becomes  an almost Riemannian foliation in
the sense of \cite[Definition 2.1]{LZ01}. In order to get information on the leafwise scalar curvature, one further rescales the metric $\frac{1}{\varepsilon^2}g^{F_1^\perp}\oplus g^{F_2^\perp}$ (standardly) to
$\frac{1}{\beta^2}(\frac{1}{\varepsilon^2}g^{F_1^\perp}\oplus g^{F_2^\perp})$  (compare with \cite[(1.4)]{LZ01} and \cite{LW09}), which is equivalent to rescaling $g^F$ to $\beta^2g^F$.  Putting these two rescaling procedures together, it is natural to introduce the following defomation of $g^{TM}$.

For any $\beta,\ \varepsilon>0$, let
$g_{\beta,\varepsilon}^{TM}$ be the rescaled  Riemannian metric on
$TM$ defined by
\begin{align}\label{1.8}
g^{TM}_{\beta,\varepsilon }= \beta^2 g^{F}\oplus \frac{1}{
\varepsilon^{2}} g^{F^\perp_1}\oplus    g^{F^\perp_2}.
\end{align}
We will always assume that $0<\beta,\ \varepsilon\leq 1$.

We will use the subscripts and/or superscripts ``$\beta$, $\varepsilon$'' to
decorate the geometric data associated to $g^{TM}_{\beta,\varepsilon }$.
For example, $\nabla^{TM,\beta,\varepsilon}$ will denote the Levi-Civita
connection associated to $g^{TM}_{\beta,\varepsilon} $. When the
corresponding notation does not involve ``$\beta,\ \varepsilon$'', we will
mean that it corresponds to the case of $\beta=\varepsilon=1$.

Let $p$, $p_1^\perp$, $p_2^\perp$ be the orthogonal projections from
$TM$ to $F$, $F_1^\perp$, $F^\perp_2$ with respect to the orthogonal
splitting (\ref{1.4}).
Let $\nabla^{F,\beta,\varepsilon}$, $\nabla^{F_1^\perp,\beta,\varepsilon}$,
$\nabla^{F_2^\perp,\beta,\varepsilon}$ be the Euclidean connections on
$F$, $F_1^\perp$, $F^\perp_2$ defined by
\begin{align}\label{1.9}
\nabla^{F,\beta,\varepsilon}=p \nabla^{TM,\beta,\varepsilon}p,\ \
\nabla^{F_1^\perp,\beta,\varepsilon}=p_1^\perp
\nabla^{TM,\beta,\varepsilon}p_1^\perp,\ \
\nabla^{F_2^\perp,\beta,\varepsilon}=p_2^\perp\nabla^{TM,\beta,\varepsilon}p_2^\perp.
\end{align}
In particular, one has
\begin{align}\label{1.9a}
\nabla^{F }=p \nabla^{TM }p,\ \
\nabla^{F_1^\perp }=p_1^\perp
\nabla^{TM }p_1^\perp,\ \
\nabla^{F_2^\perp }=p_2^\perp\nabla^{TM }p_2^\perp.
\end{align}

By (\ref{1.7})-(\ref{1.9a}) and the integrability of $F$, the
following identities hold for $X\in\Gamma(F)$:
\begin{align}\label{1.10}
\nabla^{F,\beta,\varepsilon}=\nabla^F ,\ \ p
\nabla^{TM,\beta,\varepsilon}_Xp_i^\perp=p \nabla^{TM }_Xp_i^\perp,\ \
i=1,\ 2,
\end{align}
$$p_1^\perp \nabla^{TM,\beta,\varepsilon}_X p=\beta^{2 }\varepsilon^{2} p_1^\perp \nabla^{TM }_X p,\ \ \
 p_2^\perp \nabla^{TM,\beta,\varepsilon}_X p=\beta^2   p_2^\perp \nabla^{TM }_X p. $$

 From (\ref{1.5})-(\ref{1.8}), we deduce that for
 $X\in\Gamma(F)$, $U_i,\, V_i\in\Gamma(F^\perp_i)$, $i=1,\, 2$,
\begin{align}\label{1.11}
 \left\langle \nabla^{TM,\beta,\varepsilon}_{U_1}V_1,X\right\rangle = \left\langle \nabla^{TM
 }_{U_1}V_1,X\right\rangle= \frac{1}{2 }  \left\langle \left[{U_1},V_1\right],X\right\rangle,
\end{align}
while
\begin{align}\label{1.11a}
 \left\langle \nabla^{TM,\beta,\varepsilon}_{U_2}V_2,X\right\rangle = \left\langle \nabla^{TM
 }_{U_2}V_2,X\right\rangle= \frac{1}{2 }  \left\langle \left[{U_2},V_2\right],X\right\rangle=0.
\end{align}
Equivalently, for any $U_i\in \Gamma(F^\perp_i)$, $i=1,\, 2$,
\begin{align}\label{1.12}
 p_1^\perp\nabla^{TM,\beta,\varepsilon}_{U_1}p= {\beta^{2 }\varepsilon^{2}}  p_1^\perp\nabla^{TM
 }_{U_1}p,\ \ \ p_2^\perp\nabla^{TM,\beta,\varepsilon}_{U_2}p=0.
\end{align}
Similarly, one verifies that
\begin{align}\label{1.13}
  \left\langle \nabla^{TM,\beta,\varepsilon}_{U_1}X,U_2\right\rangle
  =\frac{1}{2} \left\langle[U_1,X],U_2\right\rangle
  -\frac{\beta^2 }{2}\left\langle[U_1,U_2],X\right\rangle,
\end{align}
$$\left\langle \nabla^{TM,\beta,\varepsilon}_{U_2}X,U_1\right\rangle
  =\frac{\varepsilon^{2 }}{2 } \left\langle[U_1,X],U_2\right\rangle
  +\frac{\beta^{2 }\varepsilon^{2}}{2}\left\langle[U_1,U_2],X\right\rangle .$$

 For   convenience  of the  later computations, we collect the
 asymptotic behavior of various
covariant derivatives in the following lemma. These formulas can
be derived by applying (\ref{1.5})-(\ref{1.13}). The inner products
appear in the   lemma correspond to $\beta=\varepsilon=1$.

\begin{lemma}\label{tf} The following formulas hold for $X,\,
Y,\,Z\in \Gamma(F)$, $U_i,\,V_i,\, W_i\in \Gamma(F_i^\perp)$ with
$i=1,\, 2$, when $\beta>0$, $\varepsilon>0$ are small,
\begin{align}\label{f1}
 \left\langle\nabla^{TM,\beta,\varepsilon}_XY,Z\right\rangle  =O(1)
 , \ \
 \left\langle\nabla^{TM,\beta,\varepsilon}_XY,U_1\right\rangle=O\left(\beta^{2 }\varepsilon^{2}\right),\ \
\left\langle\nabla^{TM,\beta,\varepsilon}_XY,U_2\right\rangle=O\left(\beta^2\right),
\end{align}
\begin{align}\label{f2}
 \left\langle\nabla^{TM,\beta,\varepsilon}_XU_1,Y\right\rangle  =O\left(1\right)
 , \ \
 \left\langle\nabla^{TM,\beta,\varepsilon}_XU_1,V_1\right\rangle =O\left(1 \right),\ \
\left\langle\nabla^{TM,\beta,\varepsilon}_XU_1,U_2\right\rangle
=O\left(1\right),
\end{align}
\begin{align}\label{f3}
 \left\langle\nabla^{TM,\beta,\varepsilon}_XU_2,Y\right\rangle  =O\left(1\right)
 , \ \
 \left\langle\nabla^{TM,\beta,\varepsilon}_XU_2,U_1\right\rangle =O\left(  {\varepsilon^{2} } \right),\ \
\left\langle\nabla^{TM,\beta,\varepsilon}_XU_2,V_2\right\rangle =O\left(
{1} \right),
\end{align}
\begin{align}\label{f4}
 \left\langle\nabla^{TM,\beta,\varepsilon}_{U_1}X,Y\right\rangle  =O\left(1\right)
 , \ \
 \left\langle\nabla^{TM,\beta,\varepsilon}_{U_1}X,V_1\right\rangle =O\left( {\beta^{ 2}\varepsilon^{2}} \right),\ \
\left\langle\nabla^{TM,\beta,\varepsilon}_{U_1}X,U_2\right\rangle =O\left(
{1} \right),
\end{align}
\begin{align}\label{f5}
 \left\langle\nabla^{TM,\beta,\varepsilon}_{U_1}V_1,X\right\rangle  =O\left( {1} \right)
 , \ \
 \left\langle\nabla^{TM,\beta,\varepsilon}_{U_1}V_1,W_1\right\rangle =O\left( {1} \right),\ \
\left\langle\nabla^{TM,\beta,\varepsilon}_{U_1}V_1,U_2\right\rangle
=O\left(\frac{1}{\varepsilon^{2}}\right),
\end{align}
\begin{align}\label{f6}
 \left\langle\nabla^{TM,\beta,\varepsilon}_{U_1}U_2,X\right\rangle  =O\left(\frac{1}{\beta^2}\right)
 , \ \
 \left\langle\nabla^{TM,\beta,\varepsilon}_{U_1}U_2,V_1\right\rangle =O\left( {1}  \right),\ \
\left\langle\nabla^{TM,\beta,\varepsilon}_{U_1}U_2,V_2\right\rangle
=O\left(1\right),
\end{align}
\begin{align}\label{f7}
 \left\langle\nabla^{TM,\beta,\varepsilon}_{U_2}X,Y\right\rangle =O\left( {1} \right)
 , \ \
 \left\langle\nabla^{TM,\beta,\varepsilon}_{U_2}X,U_1\right\rangle =O\left( { {\varepsilon^{2}} }
\right),\ \
\left\langle\nabla^{TM,\beta,\varepsilon}_{U_2}X,V_2\right\rangle
=0,
\end{align}
\begin{align}\label{f8}
 \left\langle\nabla^{TM,\beta,\varepsilon}_{U_2}U_1,X\right\rangle  =O\left( \frac{1}{\beta^2} \right)
 , \ \
 \left\langle\nabla^{TM,\beta,\varepsilon}_{U_2}U_1,V_1\right\rangle =O\left( {1}  \right),\ \
\left\langle\nabla^{TM,\beta,\varepsilon}_{U_2}U_1,V_2\right\rangle
=O\left(1\right),
\end{align}
\begin{align}\label{f9}
 \left\langle\nabla^{TM,\beta,\varepsilon}_{U_2}V_2,X\right\rangle  =0
 , \ \
 \left\langle\nabla^{TM,\beta,\varepsilon}_{U_2}V_2,U_1\right\rangle =O\left( {\varepsilon^2}  \right),\ \
\left\langle\nabla^{TM,\beta,\varepsilon}_{U_2}V_2,W_2\right\rangle
=O\left( {1} \right).
\end{align}
\end{lemma}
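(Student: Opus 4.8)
The plan is to derive all nine groups of estimates in Lemma~\ref{tf} directly from the polarization formula \eqref{1.7} for $\nabla^{TM,\beta,\varepsilon}$, combined with the structural identities \eqref{1.5}, \eqref{1.6}, \eqref{1.10}--\eqref{1.13} and the integrability hypotheses: $F$ is integrable \eqref{1.0}, and under Condition (C) the bundle $F_2^\perp$ is integrable \eqref{1.6a}. The basic mechanism is that in $g^{TM}_{\beta,\varepsilon}$ the three summands $F$, $F_1^\perp$, $F_2^\perp$ carry the weights $\beta^2$, $\varepsilon^{-2}$, $1$ respectively, so a bracket term $\langle [A,B],C\rangle_{\beta,\varepsilon}$ appearing in \eqref{1.7} equals the unweighted bracket pairing times the weight attached to the bundle containing $C$; simultaneously, the left-hand side $2\langle \nabla^{TM,\beta,\varepsilon}_XY,Z\rangle_{\beta,\varepsilon}$ carries the weight of the bundle containing $Z$. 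Dividing through, each covariant-derivative inner product renormalized back to $\beta=\varepsilon=1$ (as stipulated in the statement, the inner products on the right are the unweighted ones) becomes a sum of unweighted bracket pairings, each multiplied by a ratio of weights; tracking these ratios and discarding the ones that are $O(1)$ or smaller yields the stated orders.

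Concretely, I would proceed group by group. First, for \eqref{f1}--\eqref{f4} I would use \eqref{1.10}, which already records $\nabla^{F,\beta,\varepsilon}=\nabla^F$ and the scaling of the mixed blocks $p_i^\perp\nabla^{TM,\beta,\varepsilon}_Xp$; together with \eqref{1.5} (the infinitesimal almost-isometry relations) these immediately give the $F$-to-$F$, $F$-to-$F_i^\perp$ and $F_i^\perp$-to-$F$ derivatives in the $X\in\Gamma(F)$ direction, and the vanishing of the last entry in \eqref{1.5} is what forces the improved order in the second slot of \eqref{f3} and \eqref{f4}. The genuinely new content is in \eqref{f5}--\eqref{f9}, the derivatives along directions in $F_1^\perp$ and $F_2^\perp$. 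Here I would feed the three vectors into \eqref{1.7} with the weighted metric, expand the six terms, and use: \eqref{1.11} and \eqref{1.11a} for the $\langle\nabla_{U_i}V_i,X\rangle$ pieces (the latter vanishes identically by the second line of \eqref{1.5}, which is what makes the first entries of \eqref{f9} and \eqref{f7} exactly $0$ rather than merely small); \eqref{1.13} for the mixed $\langle\nabla_{U_1}X,U_2\rangle$ and $\langle\nabla_{U_2}X,U_1\rangle$ couplings; \eqref{1.12} for $\langle\nabla_{U_i}X$-type terms; and the integrability \eqref{1.6a} to kill $[U_2,V_2]$-components lying outside $F_2^\perp$, which controls \eqref{f9}. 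The dangerous denominators $1/\varepsilon^2$ in \eqref{f5} and $1/\beta^2$ in \eqref{f6} and \eqref{f8} arise precisely from a term $\langle[U_1,V_1],U_2\rangle$ or $\langle[U_1,U_2],X\rangle$ in which the metric weight on the output bundle ($1$ on $F_2^\perp$, $\beta^2$ on $F$) is divided by the larger weight ($\varepsilon^{-2}$ on $F_1^\perp$) sitting on an input slot after one renormalizes back to $\beta=\varepsilon=1$; I would exhibit these terms explicitly and check no worse power survives.

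The organizing bookkeeping device I would state once at the outset is the following: write $w(F)=\beta^2$, $w(F_1^\perp)=\varepsilon^{-2}$, $w(F_2^\perp)=1$; then for $A\in\Gamma(E_A)$, $B\in\Gamma(E_B)$, $C\in\Gamma(E_C)$ with $E_A,E_B,E_C$ among the three summands, the term of \eqref{1.7} involving $\langle[A,B],C\rangle$ contributes to $\langle\nabla^{TM,\beta,\varepsilon}_AB,C\rangle$ (normalized to $\beta=\varepsilon=1$) a multiple of $w(E_C)/w(E_C)=1$ for that term's own pairing but the \emph{cross} terms $\langle[A,C],B\rangle$ and $\langle[B,C],A\rangle$ contribute multiples of $w(E_B)/w(E_C)$ and $w(E_A)/w(E_C)$ respectively, while the derivative-of-metric terms $A\langle B,C\rangle$ etc.\ contribute $w(E_C)/w(E_C)$ with $E_B=E_C$ (else the pairing vanishes). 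Once this table is in place, each of \eqref{f1}--\eqref{f9} is a one-line verification: read off the relevant weight ratios, note which pairings vanish by \eqref{1.5}, \eqref{1.6a}, or orthogonality, and take the maximum of the surviving powers of $\beta$ and $\varepsilon$. The main obstacle is not conceptual but organizational — it is making sure that in the two or three entries with negative powers ($1/\varepsilon^2$ in \eqref{f5}, $1/\beta^2$ in \eqref{f6}, \eqref{f8}) one has correctly accounted for \emph{all} six terms of \eqref{1.7}, since a single overlooked cross term with an even larger weight ratio (for instance one mixing a $\beta^2$ and an $\varepsilon^{-2}$, giving $\beta^2\varepsilon^2$ in a numerator but potentially $\beta^{-2}\varepsilon^{-2}$ if it lands in the wrong slot) would spoil the claimed order; I would double-check these by also invoking the already-established relations \eqref{1.11}--\eqref{1.13}, which were derived for exactly these combinations and therefore serve as an independent cross-check.
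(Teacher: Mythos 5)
Your plan is essentially the paper's own proof, just reorganized: the paper also reads off each entry from the Koszul formula~\eqref{1.7} with the weighted metric~\eqref{1.8}, using the almost-isometry relations~\eqref{1.5}, Condition~(C) \eqref{1.6a}, and the pre-computed scaled identities~\eqref{1.10}--\eqref{1.13}, and it singles out precisely the three offending terms $\langle[U_1,U_2],V_1\rangle$, $\langle[V_1,U_2],U_1\rangle$, $U_2\langle U_1,V_1\rangle$ as the source of the $\varepsilon^{-2}$ in~\eqref{f5}.

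One caution worth stating explicitly, because your "weight-ratio table plus note which pairings vanish plus take the maximum" slogan slightly overstates what the table alone delivers. In the two places where the sharpest order occurs --- the second entry of~\eqref{f4}, $\langle\nabla^{TM,\beta,\varepsilon}_{U_1}X,V_1\rangle=O(\beta^{2}\varepsilon^{2})$, and the first entry of~\eqref{f5}, $\langle\nabla^{TM,\beta,\varepsilon}_{U_1}V_1,X\rangle=O(1)$ --- the worst-weighted terms in~\eqref{1.7} (the three carrying $\varepsilon^{-2}$ against a $\beta^{2}$ output weight, hence formally $\beta^{-2}\varepsilon^{-2}$) do \emph{not} individually vanish; they \emph{cancel} as a group by the first line of~\eqref{1.5}, $X\langle U_1,V_1\rangle=\langle[X,U_1],V_1\rangle+\langle[X,V_1],U_1\rangle$. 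A na\"ive maximum-of-surviving-powers reading of your table would give $O(1)$ and $O(\beta^{-2}\varepsilon^{-2})$ respectively, both wrong. You do cover yourself by saying you would invoke~\eqref{1.11} and~\eqref{1.12}, which are exactly where the paper has baked in this cancellation, so the proof you would write is correct; but it is worth recognizing that these two entries are a genuine two-line computation, not a table lookup, and that the mechanism is a cancellation forced by~\eqref{1.5}, not a vanishing of any single bracket pairing. Also, your parenthetical description of the derivative-of-metric terms omits the $C\langle A,B\rangle$ term (weight $w(E_A)/w(E_C)$ when $E_A=E_B$), which is harmless here only because in each entry where it is nonzero it never exceeds the order already produced by the cross bracket terms.
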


\begin{proof}
Formulas in (\ref{f1}) follows from (\ref{1.10}). 

The first formula in (\ref{f2}) follows from (\ref{1.8}) and the second formula in (\ref{f1}). The second one is trivial and the third one follows from (\ref{1.13}). 

The first formula in (\ref{f3}) follows from (\ref{1.8}) and the third formula in (\ref{f1}).  The second one follows from the second formulas in (\ref{1.5}) and (\ref{1.13}). The third one is trivial. 

The first formula in (\ref{f4}) follows from (\ref{1.0}), (\ref{1.7}) and (\ref{1.8}). The second one follows from (\ref{1.12}) and the third one follows from the first formula in (\ref{1.13}). 

The first formula in (\ref{f5}) follows from (\ref{1.8}) and the second formula in (\ref{f4}). The second formula is trivial. For the third formula, the $\frac{1}{\varepsilon^2}$ factor comes from the terms involving $\langle [U_1,U_2],V_1\rangle$, $\langle [V_1,U_2],U_1\rangle$ and $U_2\langle U_1,V_1\rangle$.

The first formula in (\ref{f6}) follows from the first formula in (\ref{1.13}). The second one is trivial, and the third one follows from (\ref{1.6a}). 

The first formula in (\ref{f7}) follows from the first formula in (\ref{1.10}). The second one follows from the second formula in (\ref{1.13}), and third one follows from (\ref{1.11a}). 

The first formula in (\ref{f8})  follows from (\ref{1.8}) and the second formula in (\ref{f7}). 
The second one is trivial, and the third one follows from (\ref{1.6a}).

The first formula in (\ref{f9}) follows from the third formula in (\ref{f7}). The second one follows from the third formula in (\ref{f8}), and the third one is trivial.

The proof of Lemma \ref{tf} is completed. 
\end{proof}

In what follows, when we compute the asymptotics of various covariant derivatives,  
we will simply use the above asymptotic formulas freely without further notice.

  Let $R^{TM,\beta,\varepsilon}=(\nabla^{TM,\beta,\varepsilon})^2$ be the curvature of
  $\nabla^{TM,\beta,\varepsilon}$. Then for any $X,\ Y\in\Gamma(TM)$, one has
  the following standard formula,
\begin{align}\label{1.14}
   R^{TM,\beta,\varepsilon}(X,Y)=\nabla^{TM,\beta,\varepsilon}_X\nabla^{TM,\beta,\varepsilon}_Y-\nabla^{TM,\beta,\varepsilon}_Y\nabla^{TM,\beta,\varepsilon}_X-\nabla^{TM,\beta,\varepsilon}_{[X,Y]}.
\end{align}
Let $R^F=(\nabla^F)^2$ be the curvature of $\nabla^F$.
Let $k^{TM,\beta,\varepsilon}$, $k^F$ denote the scalar curvatures
of $g^{TM,\beta,\varepsilon}$, $g^F$ respectively. Recall that
$k^F$ is defined in (\ref{001}). The following formula for $k^F$
is obvious,
\begin{align}\label{1.23aa}
 k^F=- \sum_{i,\, j=1}^{{\rm rk}(F)}\left\langle R^F\left(f_i,f_j\right)f_i,f_j\right\rangle,
\end{align}
where $f_i$, $i=1,\,\cdots,\, {\rm rk}(F)$, is an orthonormal basis of $(F,g^F)$.  Clearly, when $F=TM$, it reduces to the usual definition of the
scalar curvature $k^{TM}$ of $g^{TM}$.

\begin{prop}\label{t12.1}  If Condition (C) holds, then   when $\beta>0$, $\varepsilon>0$ are small, the following formula holds uniformly on any compact subset of $M$,
\begin{align}\label{1.24}
k^{TM,\beta,\varepsilon}=\frac{k^F }{\beta^2}+O\left(1+\frac{\varepsilon^2}{\beta^2}\right).
\end{align}
\end{prop}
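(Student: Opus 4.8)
The plan is to compute the scalar curvature $k^{TM,\beta,\varepsilon}$ directly from the Gauss-type curvature formula expressed in terms of the Levi-Civita connection, using the orthogonal splitting (\ref{1.4}) and tracking the powers of $\beta,\varepsilon$ with the help of Lemma \ref{tf}. First I would fix, at a point $x$ in a given compact set, an orthonormal frame for $g^{TM}$ adapted to the splitting, say $\{f_a\}$ for $F$, $\{u_\alpha\}$ for $F_1^\perp$, $\{u_\mu\}$ for $F_2^\perp$; then $\{\beta^{-1}f_a,\ \varepsilon u_\alpha,\ u_\mu\}$ is orthonormal for $g^{TM,\beta,\varepsilon}$. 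Writing the scalar curvature as $k^{TM,\beta,\varepsilon}=-\sum \langle R^{TM,\beta,\varepsilon}(e_i,e_j)e_i,e_j\rangle_{\beta,\varepsilon}$ over this rescaled frame, I expand each curvature term via (\ref{1.14}) into sums of products of two covariant derivatives $\nabla^{TM,\beta,\varepsilon}$ (plus the $\nabla_{[\cdot,\cdot]}$ term), and estimate every such product using the asymptotic table of Lemma \ref{tf}, being careful that each rescaled frame vector carries its own factor of $\beta$, $\varepsilon$ or $1$, and the rescaled inner product contributes the reciprocal factors.

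The key structural point is to isolate the term that produces the leading $k^F/\beta^2$. This should come exactly from the ``purely leafwise'' block: the terms $-\sum_{a,b}\langle R^{TM,\beta,\varepsilon}(\beta^{-1}f_a,\beta^{-1}f_b)\beta^{-1}f_a,\beta^{-1}f_b\rangle_{\beta,\varepsilon}$. By (\ref{1.10}) we have $\nabla^{F,\beta,\varepsilon}=\nabla^F$, and the integrability of $F$ together with (\ref{1.7}) shows that restricted to leaf directions the connection $\nabla^{TM,\beta,\varepsilon}$ agrees with $\nabla^F$ up to terms landing in $F_1^\perp\oplus F_2^\perp$ that are $O(\beta^2\varepsilon^2)$ or $O(\beta^2)$ (first line of Lemma \ref{tf}). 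Hence the leafwise sectional-curvature block equals $R^F$ plus controlled lower-order corrections; the overall $\beta^{-4}$ from the four rescaled $f$'s combines with the $\beta^2$ from one factor of the rescaled metric $\langle\cdot,\cdot\rangle_{\beta,\varepsilon}$ restricted to $F$ — more precisely the trace convention gives $\beta^{-4}\cdot\beta^2=\beta^{-2}$ — to yield $k^F/\beta^2$. The correction terms in this block, being products of an $O(1)$ and an $O(\beta^2)$ (or two $O(\beta^2)$-type) quantities against the $\beta^{-2}$ weighting, contribute $O(1)$.

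Next I would check that every remaining block — mixed blocks involving at least one $F_1^\perp$ or $F_2^\perp$ direction — contributes only $O(1+\varepsilon^2/\beta^2)$. This is the bookkeeping heart of the argument: for each of the finitely many index-type combinations one reads off from Lemma \ref{tf} the worst-case growth of the two covariant-derivative factors, multiplies by the frame rescaling factors ($\beta^{-1}$ per leaf leg, $\varepsilon$ per $F_1^\perp$ leg, $1$ per $F_2^\perp$ leg, and the matching reciprocals from the rescaled metric), and verifies the product is $O(1+\varepsilon^2/\beta^2)$. The potentially dangerous entries are the ones carrying $1/\varepsilon^2$ or $1/\beta^2$ in Lemma \ref{tf} (e.g. (\ref{f5}), (\ref{f6}), (\ref{f8})); one must confirm that the accompanying $\varepsilon$-factors from the $F_1^\perp$ frame legs, or the fact that the large terms pair with small ones, tame them. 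Condition (C), i.e. integrability of $F_2^\perp$ via (\ref{1.6a}), is precisely what kills several would-be obstructions (the third formulas in (\ref{f6}) and (\ref{f8})) and keeps the mixed $F_2^\perp$-blocks bounded. The $\nabla^{TM,\beta,\varepsilon}_{[e_i,e_j]}$ terms are handled the same way, noting $[f_a,f_b]\in\Gamma(F)$ by integrability.

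The main obstacle I anticipate is exactly this last step: ensuring that no mixed block secretly produces a term of order $1/\beta^2$ (which would corrupt the leading coefficient) or a term worse than $\varepsilon^2/\beta^2$. In particular the blocks mixing $F_1^\perp$ with $F_2^\perp$ legs, where Lemma \ref{tf} offers both $O(1/\varepsilon^2)$ and $O(1/\beta^2)$ entries, require careful pairing — one has to verify that in the curvature expression $\nabla\nabla-\nabla\nabla-\nabla_{[\cdot,\cdot]}$ the large factor always meets a compensating small factor or a compensating frame rescaling, so that only the stated error survives. Once each of the finitely many blocks is checked, summing gives (\ref{1.24}) with uniformity over the compact set, since all the $O(\cdot)$ bounds in Lemma \ref{tf} are uniform there.
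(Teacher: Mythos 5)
Your plan is essentially the paper's own proof. The paper likewise expands $k^{TM,\beta,\varepsilon}$ over the rescaled orthonormal frame, treats the six ``blocks'' ($FF$, $FF_1^\perp$, $FF_2^\perp$, $F_1^\perp F_1^\perp$, $F_2^\perp F_2^\perp$, $F_1^\perp F_2^\perp$) one at a time via (\ref{1.14}), (\ref{1.7}) and the table in Lemma \ref{tf} (see (\ref{1.15})--(\ref{1.23a})), identifies the $FF$ block as the source of $k^F/\beta^2+O(1)$, and checks that every other block is $O(1+\varepsilon^2/\beta^2)$ after the frame/metric rescaling; your observations about where the $\beta^{-2}$ prefactor comes from, where Condition (C) enters (to tame (\ref{f6}) and (\ref{f8})), and which entries of Lemma \ref{tf} are the dangerous $1/\varepsilon^2$ and $1/\beta^2$ ones all match the paper's computation. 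What you have written is a correct outline rather than the fully carried-out bookkeeping, but the approach and all the key structural points coincide with the paper's argument.
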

\begin{proof}
 By (\ref{1.0}),  (\ref{1.10}),
(\ref{1.14}) and Lemma \ref{tf}, one
deduces that when $\beta>0$, $\varepsilon>0$ are very small, for any $X,\
Y\in\Gamma(F)$, one has
\begin{multline}\label{1.15}
\left\langle
R^{TM,\beta,\varepsilon}(X,Y)X,Y\right\rangle=\left\langle\nabla^{TM,\beta,\varepsilon}_X
\left(p+p_1^\perp+p_2^\perp\right)\nabla^{TM,\beta,\varepsilon}_YX,Y\right\rangle
\\
-\left\langle\nabla^{TM,\beta,\varepsilon}_Y\left(p+p_1^\perp+p_2^\perp\right)
\nabla^{TM,\beta,\varepsilon}_XX,Y\right\rangle-\left\langle
\nabla^{TM,\beta,\varepsilon}_{[X,Y]}X,Y\right\rangle\\
=\left\langle R^F(X,Y)X,Y\right\rangle
-\beta^{2 }\varepsilon^{2}\left\langle p_1^\perp
\nabla^{TM}_YX,\nabla^{TM}_XY\right\rangle
-\beta^2\left\langle p_2^\perp
\nabla^{TM}_YX,\nabla^{TM}_XY\right\rangle\\
+\beta^{2 }\varepsilon^{2}\left\langle p_1^\perp
\nabla^{TM}_XX,\nabla^{TM}_YY\right\rangle+\beta^2\left\langle
p_2^\perp \nabla^{TM}_XX,\nabla^{TM}_YY\right\rangle\\
=\left\langle R^F(X,Y)X,Y\right\rangle
+O\left( \beta^2\right).
\end{multline}

For $X\in\Gamma(F),\ U\in\Gamma(F_1^\perp)$, by
(\ref{1.5})-(\ref{1.14}), one finds that when $\beta , \ \varepsilon>0$ are
  small,
\begin{multline}\label{1.16}
\left\langle
R^{TM,\beta,\varepsilon}(X,U)X,U\right\rangle=\left\langle\nabla^{TM,\beta,\varepsilon}_X
\left(p+p_1^\perp+p_2^\perp\right)\nabla^{TM,\beta,\varepsilon}_UX,U\right\rangle
\\
-\left\langle\nabla^{TM,\beta,\varepsilon}_U
\left(p+p_1^\perp+p_2^\perp\right)\nabla^{TM,\beta,\varepsilon}_XX,U\right\rangle-\left\langle
\nabla^{TM,\beta,\varepsilon}_{\left(p+p_1^\perp+p_2^\perp\right)[X,U]}X,U\right\rangle\end{multline}
$$
=\beta^{2 }\varepsilon^{2}\left\langle\nabla^{TM}_Xp\nabla^{TM}_UX,U\right\rangle+\beta^{2 }\varepsilon^{2}
\left\langle\nabla^{TM,\beta,\varepsilon}_Xp_1^\perp
\nabla^{TM}_UX,U\right\rangle - {\varepsilon^{2}} \left\langle
p_2^\perp\nabla^{TM,\beta,\varepsilon}_UX,
\nabla^{TM,\beta,\varepsilon}_XU\right\rangle
$$
$$
-\beta^{2 }\varepsilon^{2}\left\langle\nabla^{TM}_Up\nabla^{TM}_XX,U\right\rangle-
\beta^{2 }\varepsilon^{2}
\left\langle\nabla^{TM,\beta,\varepsilon}_Up_1^\perp\nabla^{TM}_XX,U\right\rangle
+ {\varepsilon^{2}} \left\langle
p_2^\perp\nabla^{TM,\beta,\varepsilon}_XX,\nabla^{TM,\beta,\varepsilon}_UU\right\rangle
$$
$$
-\beta^{2 }\varepsilon^{2}\left\langle\nabla^{TM}_{\left(p+p_1^\perp\right)[X,U]}X,U\right\rangle - \left\langle\nabla^{TM,\beta,\varepsilon}_{ p_2^\perp [X,U]}X,U\right\rangle
= O\left(\beta^2+  \varepsilon^{2} \right).$$

Similarly,  for $X\in\Gamma(F)$,
$U\in\Gamma(F_2^\perp)$, one has that when $\beta>0$, $\varepsilon>0$ are small,
\begin{multline}\label{1.18}
\left\langle
R^{TM,\beta,\varepsilon}(X,U)X,U\right\rangle=\left\langle\nabla^{TM,\beta,\varepsilon}_X
\left(p+p_1^\perp+p_2^\perp\right)\nabla^{TM,\beta,\varepsilon}_UX,U\right\rangle
\\
-\left\langle\nabla^{TM,\beta,\varepsilon}_U
\left(p+p_1^\perp+p_2^\perp\right)\nabla^{TM,\beta,\varepsilon}_XX,U\right\rangle-\left\langle
\nabla^{TM,\beta,\varepsilon}_{\left(p+p_1^\perp+p_2^\perp\right)[X,U]}X,U\right\rangle\\
=\beta^2 \left\langle\nabla^{TM}_Xp\nabla^{TM}_UX,U\right\rangle-\frac{1}{\varepsilon^{2}}\left\langle
p_1^\perp \nabla^{TM,\beta,\varepsilon}_UX,
\nabla^{TM,\beta,\varepsilon}_XU\right\rangle+\beta^2\left\langle
\nabla^{TM,\beta,\varepsilon}_X
p_2^\perp\nabla^{TM}_UX,U\right\rangle
\end{multline}
$$
-\beta^2\left\langle\nabla^{TM}_Up\nabla^{TM}_XX,U\right\rangle- {\beta^{2 }}{\varepsilon^2}
\left\langle\nabla^{TM,\beta,\varepsilon}_Up_1^\perp\nabla^{TM}_XX,U\right\rangle
-\beta^2
\left\langle\nabla^{TM,\beta,\varepsilon}_Up_2^\perp\nabla^{TM}_XX,U\right\rangle$$
$$-\beta^2\left\langle\nabla^{TM}_{p[X,U]}X,U\right\rangle
-\beta^2\left\langle\nabla^{TM
}_{p_2^\perp[X,U]}X,U\right\rangle
=
O\left(  {\beta^2+\varepsilon^2} \right).$$

 For $U,\ V\in\Gamma(F_1^\perp)$, one verifies   that
\begin{multline}\label{1.19}
\left\langle
R^{TM,\beta,\varepsilon}(U,V)U,V\right\rangle=\left\langle\nabla^{TM,\beta,\varepsilon}_U
\left(p+p_1^\perp+p_2^\perp\right)\nabla^{TM,\beta,\varepsilon}_VU,V\right\rangle
\\
-\left\langle\nabla^{TM,\beta,\varepsilon}_V\left(p+p_1^\perp+p_2^\perp\right)
\nabla^{TM,\beta,\varepsilon}_UU,V\right\rangle-\left\langle
\nabla^{TM,\beta,\varepsilon}_{\left(p+p_1^\perp+p_2^\perp\right)[U,V]}U,V\right\rangle
\end{multline}
$$
=\beta^2\varepsilon^{2}\left\langle\nabla^{TM
}_Up\nabla^{TM,\beta,\varepsilon}_VU,V\right\rangle+\left\langle\nabla^{TM
}_Up_1^\perp\nabla^{TM
}_VU,V\right\rangle-\varepsilon^{2}\left\langle
p_2^\perp\nabla^{TM,\beta,\varepsilon}_VU,\nabla^{TM,\beta,\varepsilon}_UV\right\rangle$$
$$
-\beta^2\varepsilon^{2}\left\langle\nabla^{TM
}_Vp\nabla^{TM,\beta,\varepsilon}_UU,V\right\rangle
-\left\langle\nabla^{TM }_Vp_1^\perp\nabla^{TM }_UU,V\right\rangle
+\varepsilon^{2}\left\langle
p_2^\perp\nabla^{TM,\beta,\varepsilon}_UU,\nabla^{TM,\beta,\varepsilon}_VV\right\rangle$$
$$
-\left\langle\nabla^{TM,\beta,\varepsilon}_{p[U,V]}U,V\right\rangle-\left\langle\nabla^{TM
}_{p_1^\perp[U,V]}U,V\right\rangle-
\left\langle\nabla^{TM,\beta,\varepsilon}_{p_2^\perp[U,V]}U,V\right\rangle$$
$$
=-\varepsilon^{2}\left\langle
p_2^\perp\nabla^{TM,\beta,\varepsilon}_VU,\nabla^{TM,\beta,\varepsilon}_UV\right\rangle
+\varepsilon^{2}\left\langle
p_2^\perp\nabla^{TM,\beta,\varepsilon}_UU,\nabla^{TM,\beta,\varepsilon}_VV\right\rangle
 +O\left(1\right) =O\left(\frac{1}{\varepsilon^{2}}\right),$$
from which one gets that when $\beta>0$, $\varepsilon>0$ are
small,
\begin{align}\label{1.20}
\varepsilon^{2}\left\langle
R^{TM,\beta,\varepsilon}(U,V)U,V\right\rangle=O\left(1\right).
\end{align}

For $U,\ V\in \Gamma(F_2^\perp)$, one verifies directly that
\begin{multline}\label{1.21}
\left\langle
R^{TM,\beta,\varepsilon}(U,V)U,V\right\rangle=\left\langle\nabla^{TM,\beta,\varepsilon}_U\left(p+p_1^\perp+p_2^\perp\right)\nabla^{TM,\beta,\varepsilon}_VU,V\right\rangle
\\
-\left\langle\nabla^{TM,\beta,\varepsilon}_V\left(p+p_1^\perp+p_2^\perp\right)
\nabla^{TM,\beta,\varepsilon}_UU,V\right\rangle-\left\langle
\nabla^{TM,\beta,\varepsilon}_{ [U,V]}U,V\right\rangle\\
=\beta^2\left\langle\nabla^{TM
}_Up\nabla^{TM,\beta, \varepsilon}_VU,V\right\rangle-\frac{1}{\varepsilon^{2}}\left\langle
p_1^\perp\nabla^{TM,\beta,\varepsilon }_VU,\nabla^{TM,\beta,\varepsilon
}_UV\right\rangle+ \left\langle
\nabla^{TM }_Up_2^\perp\nabla^{TM }_VU,V\right\rangle\\
-\beta^2\left\langle\nabla^{TM
}_Vp\nabla^{TM,\beta,\varepsilon}_UU,V\right\rangle +
\frac{1}{\varepsilon^{2}}\left\langle
p_1^\perp\nabla^{TM,\beta,\varepsilon }_UU,\nabla^{TM,\beta,\varepsilon
}_VV\right\rangle -\left\langle
\nabla^{TM }_Vp_2^\perp\nabla^{TM }_UU,V\right\rangle\\
 -\left\langle
\nabla^{TM }_{ [U,V]}U,V\right\rangle=O(1).
\end{multline}

For $U\in \Gamma(F_1^\perp),\ V\in \Gamma(F_2^\perp)$, one verifies
directly that,
\begin{multline}\label{1.23}
\left\langle
R^{TM,\beta,\varepsilon}(U,V)U,V\right\rangle=\left\langle\nabla^{TM,\beta,\varepsilon}_U
\left(p+p_1^\perp+p_2^\perp\right)\nabla^{TM,\beta,\varepsilon}_VU,V\right\rangle
\\
-\left\langle\nabla^{TM,\beta,\varepsilon}_V\left(p+p_1^\perp+p_2^\perp\right)
\nabla^{TM,\beta,\varepsilon}_UU,V\right\rangle-\left\langle
\nabla^{TM,\beta,\varepsilon}_{ [U,V]}U,V\right\rangle\\
= -\beta^2\left\langle
p\nabla^{TM,\beta,\varepsilon}_VU,\nabla^{TM,\beta,\varepsilon}_UV\right\rangle-\frac{1}{\varepsilon^{2}}\left\langle
p_1^\perp\nabla^{TM,\beta,\varepsilon}_VU,\nabla^{TM,\beta,\varepsilon}_UV\right\rangle
+\left\langle\nabla^{TM,\beta,\varepsilon}_U p_2^\perp
\nabla^{TM,\beta,\varepsilon}_VU,V\right\rangle\\
+\beta^2\left\langle
p\nabla^{TM,\beta,\varepsilon}_UU,\nabla^{TM,\beta,\varepsilon}_VV\right\rangle+\frac{1}{\varepsilon^{2}}\left\langle
p_1^\perp\nabla^{TM,\beta,\varepsilon}_UU,\nabla^{TM,\beta,\varepsilon}_VV\right\rangle
-\left\langle\nabla^{TM }_V p_2^\perp
\nabla^{TM,\beta,\varepsilon}_UU,V\right\rangle\\
+\frac{1}{\varepsilon^{2}}\left\langle
U,\nabla^{TM,\beta,\varepsilon}_{[U,V]}V\right\rangle =O\left(\frac{1}{\varepsilon^2}+\frac{1}{\beta^2}\right) ,
\end{multline}
 from which  one gets that when $\beta>0$, $\varepsilon>0$ are small,
\begin{align}\label{1.23a}
\varepsilon^{2}\left\langle
R^{TM,\beta,\varepsilon}(U,V)U,V\right\rangle= \left\langle
R^{TM,\beta,\varepsilon}(V,U)V,U\right\rangle= O\left(1+\frac{\varepsilon^2}{\beta^2}\right).
\end{align}

From  (\ref{1.23aa}), (\ref{1.15})-(\ref{1.18}), (\ref{1.20}),  (\ref{1.21}) and
(\ref{1.23a}), one gets  (\ref{1.24}).  \end{proof}

\subsection{Bott connections on $F_1^\perp$ and $F_2^\perp$
}\label{s1.3}

From (\ref{1.5}) and  (\ref{1.6a})-(\ref{1.9}), one verifies directly
that for $X\in\Gamma(F)$, $U_i,\, V_i\in\Gamma(F_i^\perp)$, $i=1,\,
2$, one has
\begin{align}\label{1.25} \left\langle\nabla^{F_1^\perp,\beta,
\varepsilon}_XU_1,V_1
\right\rangle=\left\langle\left[X,U_1\right],V_1\right\rangle-
\frac{\beta^2\varepsilon^{2}}{2}\left\langle\left[U_1,V_1\right],X\right\rangle
,
\end{align}
$$\left\langle\nabla^{F_2^\perp,\beta,\varepsilon}_XU_2,V_2
\right\rangle=\left\langle\left[X,U_2\right],V_2\right\rangle.
$$

By (\ref{1.25}), one has that for $X\in\Gamma(F)$, $U_i
\in\Gamma(F_i^\perp)$, $i=1,\, 2$,
\begin{align}\label{1.26}\lim_{\varepsilon\rightarrow 0^+} \nabla^{F_i^\perp,\beta,
\varepsilon}_XU_i
 =
 \widetilde{\nabla}^{F_i^\perp }_XU_i:=p_i^\perp \left[X,U_i\right]
.
\end{align}

Let $\widetilde{\nabla}^{F_i^\perp }$ be the connection on
$F_i^\perp$ defined  by the second equality in (\ref{1.26}) and by
$\widetilde{\nabla}^{F_i^\perp }_UU_i= {\nabla}^{F_i^\perp }_UU_i$
for $U\in  \Gamma(F_1^\perp\oplus F_2^\perp)$. In
view of (\ref{1.0a}) and  (\ref{1.26}), we   call
$\widetilde{\nabla}^{F_i^\perp }$ a Bott connection on $F_i^\perp$
for   $i=1$ or $ 2$. Let $\widetilde{R}^{F_i^\perp }$ denote the
curvature of $\widetilde{\nabla}^{F_i^\perp }$ for $i=1,\, 2$.

The following result holds without Condition (C). 

\begin{lemma}\label{t1.2} For $X,\, Y\in\Gamma(F)$ and $i=1,\, 2$, the following
identity holds,
\begin{align}\label{1.27}
 \widetilde{R}^{F_i^\perp }(X,Y)=0
.
\end{align}
\end{lemma}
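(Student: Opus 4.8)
The plan is to prove that the Bott connection $\widetilde\nabla^{F_i^\perp}$ is leafwise flat, exactly as in the classical case recalled in \eqref{1.0b}. The key point is that on leaf directions $\widetilde\nabla^{F_i^\perp}$ is defined by the intrinsic formula $\widetilde\nabla^{F_i^\perp}_X U_i = p_i^\perp[X,U_i]$, and this is precisely a Bott-type connection with respect to the (not necessarily integrable in the usual sense, but leafwise-relevant) splitting. First I would fix $X,Y\in\Gamma(F)$ and $U_i\in\Gamma(F_i^\perp)$ and expand, using the definition in \eqref{1.26},
\begin{align*}
\widetilde R^{F_i^\perp}(X,Y)U_i
= \widetilde\nabla^{F_i^\perp}_X\widetilde\nabla^{F_i^\perp}_Y U_i
 -\widetilde\nabla^{F_i^\perp}_Y\widetilde\nabla^{F_i^\perp}_X U_i
 -\widetilde\nabla^{F_i^\perp}_{[X,Y]}U_i
= p_i^\perp\big[X,p_i^\perp[Y,U_i]\big]-p_i^\perp\big[Y,p_i^\perp[X,U_i]\big]-p_i^\perp[[X,Y],U_i].
\end{align*}
Since $F$ is integrable, $[X,Y]\in\Gamma(F)$, so the last term is legitimately of the form $\widetilde\nabla^{F_i^\perp}_{[X,Y]}U_i$.

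The second step is to handle the projections $p_i^\perp$ appearing inside the brackets in the first two terms. Write $[Y,U_i] = p[Y,U_i] + p_1^\perp[Y,U_i] + p_2^\perp[Y,U_i]$, so that $p_i^\perp[Y,U_i] = [Y,U_i] - \big([Y,U_i] - p_i^\perp[Y,U_i]\big)$, the correction term lying in $\Gamma(F\oplus F_j^\perp)$ with $j\neq i$. Then $p_i^\perp[X,p_i^\perp[Y,U_i]] = p_i^\perp[X,[Y,U_i]] - p_i^\perp\big[X,\,(1-p_i^\perp)[Y,U_i]\big]$. Antisymmetrizing in $X,Y$, the terms $p_i^\perp[X,[Y,U_i]]-p_i^\perp[Y,[X,U_i]]-p_i^\perp[[X,Y],U_i]$ vanish by the Jacobi identity. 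So the whole curvature reduces to the correction terms
\begin{align*}
\widetilde R^{F_i^\perp}(X,Y)U_i = -\,p_i^\perp\big[X,\,(1-p_i^\perp)[Y,U_i]\big] + p_i^\perp\big[Y,\,(1-p_i^\perp)[X,U_i]\big],
\end{align*}
and it remains to show each of these vanishes, i.e. that for $X\in\Gamma(F)$ and $W\in\Gamma(F\oplus F_j^\perp)$ of the special form $W=(1-p_i^\perp)[Y,U_i]$ one has $p_i^\perp[X,W]=0$.

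The third step is to verify this vanishing using the almost isometric structure \eqref{1.5} together with Condition (C). Decompose $W = pW + p_j^\perp W$. For the $F$-component, $p_i^\perp[X,pW]=0$ because $F$ is integrable, \eqref{1.0}. For the $F_j^\perp$-component, one must show $p_i^\perp[X,\Gamma(F_j^\perp)]\subseteq\Gamma(F_j^\perp)$, equivalently that the $F_i^\perp$-component of $[X,\cdot]$ vanishes on $\Gamma(F_j^\perp)$. When $i=1$, $j=2$: one needs $p_1^\perp[X,U_2]=0$, i.e. $\langle[X,U_2],U_1\rangle=0$, which is exactly the second identity in \eqref{1.5}. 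When $i=2$, $j=1$: one needs $p_2^\perp[X,U_1]=0$, i.e. $\langle[X,U_1],U_2\rangle=0$; by the first identity of \eqref{1.5} with the roles arranged appropriately this is not quite immediate, and here I expect to use both \eqref{1.5} and Condition~(C): from \eqref{1.6a}, $[U_1,U_2]$ and similar brackets have controlled components, and combined with \eqref{1.13} (which expresses $\langle\nabla^{TM}_{U_1}X,U_2\rangle$ in terms of $\langle[U_1,X],U_2\rangle$ and $\langle[U_1,U_2],X\rangle$) one pins down $\langle[X,U_1],U_2\rangle$. The main obstacle is precisely this last bookkeeping: checking that in the $i=2$ case the mixed bracket $p_2^\perp[X,U_1]$ genuinely vanishes, which is where the specific form \eqref{1.2} of the holonomy action (lower-triangular, so $F_1^\perp=E$ is ``holonomy-invariant'') is essential — it is the infinitesimal statement that the leafwise holonomy preserves the subbundle $F_1^\perp$, hence $\widetilde\nabla^{F_1^\perp}$ and $\widetilde\nabla^{F_2^\perp}$ are both leafwise flat. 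Once these component identities are in hand, \eqref{1.27} follows for both $i=1$ and $i=2$, completing the proof.
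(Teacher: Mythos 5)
Your opening steps agree with the paper: expand the curvature via \eqref{1.26}, use the Jacobi identity and the integrability of $F$ to reduce to
\begin{align*}
\widetilde R^{F_i^\perp}(X,Y)U = -\,p_i^\perp\left[X,\left(p_1^\perp+p_2^\perp-p_i^\perp\right)[Y,U]\right] - p_i^\perp\left[Y,\left(p_1^\perp+p_2^\perp-p_i^\perp\right)[U,X]\right],
\end{align*}
which is exactly the paper's \eqref{1.28}. The $i=1$ case is also handled the same way, via the second identity of \eqref{1.5}.

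The gap is in the $i=2$ case. You reduce to showing $p_2^\perp[X,U_1]=0$ for $X\in\Gamma(F)$ and an arbitrary $U_1\in\Gamma(F_1^\perp)$, and you propose to extract this from \eqref{1.5} together with Condition~(C). But $\langle[X,U_1],U_2\rangle$ is precisely the infinitesimal version of the lower-left block $A$ in the holonomy matrix \eqref{1.2}, and nothing in the almost isometric structure forces $A=0$; so $p_2^\perp[X,U_1]=0$ is false in general, and Condition~(C) (which concerns brackets $[U_2,V_2]$, not $[X,U_1]$) does not help. Moreover the paper explicitly remarks that Lemma~\ref{t1.2} holds \emph{without} Condition~(C), so any proof depending on it is suspect.

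The missing observation is that you never need the arbitrary-$U_1$ statement: you discarded the information that the section being differentiated is $U\in\Gamma(F_i^\perp)$. For $i=2$ one has $U\in\Gamma(F_2^\perp)$, so the second identity of \eqref{1.5} (applied with $[Y,U]$, not with $[X,U_1]$) already gives $p_1^\perp[Y,U]=p_1^\perp[U,X]=0$, which is the paper's \eqref{1.30}. Hence $\left(p_1^\perp+p_2^\perp-p_2^\perp\right)[Y,U]=p_1^\perp[Y,U]=0$ and the offending terms vanish identically before you ever have to apply the outer bracket $p_2^\perp[X,\cdot]$. In short: the paper kills the \emph{inner} projection $p_1^\perp[Y,U]$ using $U\in\Gamma(F_2^\perp)$, whereas you tried to kill the \emph{outer} bracket $p_2^\perp[X,U_1]$ for arbitrary $U_1$, which would require an unavailable (and false) hypothesis.
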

\begin{proof}We proceed as in \cite[Proof of Lemma 1.14]{Z01}. By
(\ref{1.26}) and the standard formula for the curvature (cf.
\cite[(1.3)]{Z01}), for any
$U\in\Gamma(F_i^\perp)$, $i=1,\, 2$,  one has,
\begin{multline}\label{1.28}
\widetilde{R}^{F_i^\perp }(X,Y)U=\widetilde{\nabla}^{F_i^\perp
}_X\widetilde{\nabla}^{F_i^\perp }_YU-\widetilde{\nabla}^{F_i^\perp
}_Y\widetilde{\nabla}^{F_i^\perp }_XU-\widetilde{\nabla}^{F_i^\perp
}_{[X,Y]}U\\
=p_i^\perp\big([X,[Y,U]]+[Y,[U,X]]+[U,[X,Y]]\big) -p_i^\perp
\left[X,\left({\rm Id}-p_i^\perp\right)[Y,U]\right]\\ -p_i^\perp
\left[Y,\left({\rm Id}-p_i^\perp\right)[U,X]\right]\\
=-p_i^\perp
\left[X,\left(p_1^\perp+p_2^\perp-p_i^\perp\right)[Y,U]\right]-p_i^\perp
\left[Y,\left(p_1^\perp+p_2^\perp-p_i^\perp\right)[U,X]\right],
\end{multline}
where the last equality follows from the Jacobi identity and the
integrability of $F$.

Now if $i=1$, then by (\ref{1.5}), one has $U\in\Gamma(F_1^\perp)$ and
\begin{align}\label{1.29}
p_1^\perp \left[X,  p_2^\perp  [Y,U]\right]=p_1^\perp \left[Y,
p_2^\perp  [U,X]\right]=0.
\end{align}
While if $i=2$, still by (\ref{1.5}), one has $U\in\Gamma(F_2^\perp)$ and
\begin{align}\label{1.30}
p_1^\perp[Y,U]=p_1^\perp[U,X]=0
 .
\end{align}

From (\ref{1.28})-(\ref{1.30}), one gets (\ref{1.27}). The proof of
Lemma  \ref{t1.2} is completed.
\end{proof}

\begin{rem}\label{t1.3} For $i=1,\, 2$,  let $ {R}^{F_i^\perp,\beta,\varepsilon }$ denote the
curvature of $ {\nabla}^{F_i^\perp,\beta,\varepsilon }$. From (\ref{1.25})-(\ref{1.27}), one finds that for any $X,\, Y\in\Gamma(F)$, when
$\beta>0$, $\varepsilon>0$ are small, the following identity holds:
\begin{align}\label{1.31}
 {R}^{F_i^\perp,\beta,\varepsilon }(X,Y)=O\left(\beta^2\varepsilon^2\right)
 .
\end{align}

On the other hand, for $i=1,\, 2$,  by using (\ref{1.5}), (\ref{1.6a}),
(\ref{1.7}), (\ref{1.9}) and (\ref{1.14}), one verifies directly
that when $\beta>0$, $\varepsilon>0$ are small, the following identity holds,
\begin{align}\label{1.101}
{R}^{F_i^\perp,\beta,\varepsilon } =O\left(1\right)
 .
\end{align}\end{rem}

\subsection{Sub-Dirac operators associated to spin integrable  subbundles }\label{s1.4} We assume  for simplicity 
 that $TM$, $ F$, $ F_i^\perp$, $i=1,\, 2$, are all oriented and
of even rank, with the orientation of $TM$ being  compatible with
the orientations on $F$, $F_1^\perp$ and $F_2^\perp$ through
(\ref{1.4}).
We further assume that $F$ is spin and carries a fixed spin
structure.

Let $S(F)=S_+(F)\oplus S_-(F)$ be the Hermitian bundle of spinors
associated to $(F,g^F)$. For any $X\in\Gamma(F)$, the Clifford
action $c(X)$ exchanges $S_\pm(F)$.

Let $i=1$ or $ 2$.
Let $\Lambda^*(F_i^{\perp })$ denote  the exterior algebra bundle
of $F_i^{\perp, *}$. Then $\Lambda^*(F_i^{\perp })$ carries a
canonically induced metric $g^{\Lambda^*(F_i^{\perp })}$ from
$g^{F_i^\perp}$. For any $U\in F_i^\perp$, let $U^*\in
F^{\perp,*}_i$ correspond  to $U$ via $g^{F_i^\perp}$.
For any $U\in\Gamma(F_i^\perp)$, set
\begin{align}\label{1.32}
c(U)=U^*\wedge-i_U,\ \ \ \widehat{c}(U)=U^*\wedge+i_U
 ,
\end{align}
where $U^*\wedge$ and $i_U$ are the exterior and interior
multiplications by $U^*$ and $U$ on $\Lambda^*(F_i^{\perp })$.

Denote  $q={\rm rk}( F)$,  $q_i={\rm rk}(F_i^\perp)$.

Let $h_1,\, \cdots,\, h_{q_i}$ be an oriented orthonormal basis of
$F_i^\perp$. Set
\begin{align}\label{1.33}
\tau\left(F_i^\perp,g^{F_i^\perp}\right)=\left(\frac{1}{\sqrt{-1}}\right)^{\frac{q_i(q_i+1)}{2}}c\left(h_1\right)\cdots
c\left(h_{q_i}\right).
\end{align}
Then
\begin{align}\label{1.34}
\tau\left(F_i^\perp,g^{F_i^\perp}\right)^2={\rm
Id}_{\Lambda^*\left(F_i^{\perp }\right)}.
\end{align}
Set
\begin{align}\label{1.35}
 {\Lambda^*_\pm\left(F_i^{\perp }\right)}=
 \left\{ h\in \Lambda^*\left(F_i^{\perp }\right):\tau\left(F_i^{\perp },g^{F_i^\perp}\right)h=\pm h\right\} .
\end{align}

Since $q_i$ is even, for any $h\in  F_i^\perp$, $c(h)$
anti-commutes with $\tau (F_i^\perp,g^{F_i^\perp} )$, while
$\widehat{c}(h)$ commutes with $\tau (F_i^\perp,g^{F_i^\perp} )$.
In particular, $c(h)$ exchanges $ {\Lambda^*_\pm (F_i^{\perp }
)}$.

Let $\widetilde{\tau}(F_i^\perp  )$ denote the ${\bf Z}_2$-grading
of $ {\Lambda ^* (F_i^{\perp } )}$ defined by
\begin{align}\label{1.351}
 \left. \widetilde{\tau}\left(F_i^\perp
 \right)\right|_{\Lambda^{\frac{\rm even}{\rm odd}} \left(F_i^{\perp }
 \right)}=\pm{\rm Id}|_{\Lambda^{\frac{\rm even}{\rm odd}} \left(F_i^{\perp }
 \right)}.
\end{align}

Now we have  the following  ${\bf Z}_2$-graded vector bundles over
$M$:
\begin{align}\label{1.36}
S(F)=S_+(F)\oplus S_-(F),
\end{align}
\begin{align}\label{1.37}
\Lambda ^*\left(F_i^{\perp }\right)=\Lambda^*_+\left(F_i^{\perp
}\right)\oplus\Lambda^*_-\left(F_i^{\perp }\right),\ \ \ i=1,\ 2,
\end{align}
and
\begin{align}\label{1.37a}
\Lambda^* \left(F_i^{\perp }\right)=\Lambda^{\rm
even}\left(F_i^{\perp }\right)\oplus\Lambda^{\rm
odd}\left(F_i^{\perp }\right),\ \ \ i=1,\ 2.
\end{align}

 We form the following  ${\bf Z}_2$-graded  tensor product, which
 will play a role in Section 2:
\begin{align}\label{1.38}
 W\left(F,F^\perp_1,F^\perp_2\right)=S(F)\widehat{\otimes}
 \Lambda^*
\left(F_1^\perp\right)\widehat{\otimes}\Lambda^*
\left(F_2^\perp\right) ,
\end{align}
with the ${\bf Z}_2$-grading operator given by
\begin{align}\label{1.39}
\tau_W=\tau_{S(F)}\cdot
\tau\left(F_1^\perp,g^{F_1^\perp}\right)\cdot
\widetilde{\tau}\left(F_2^\perp \right),
\end{align}
where $\tau_{S(F)}$ is the ${\bf Z}_2$-grading operator defining the
splitting in (\ref{1.36}). We denote by
\begin{align}\label{1.40}
 W\left(F,F^\perp_1,F^\perp_2\right)=W_+\left(F,F^\perp_1,F^\perp_2\right)\oplus W_-\left(F,F^\perp_1,F^\perp_2\right)
\end{align}
the ${\bf Z}_2$-graded decomposition with respect to $\tau_W$.

Recall that the connections $\nabla^F$, $\nabla^{F_1^\perp}$ and
$\nabla^{F_2^\perp}$ have been defined in (\ref{1.9a}).  They lift canonically to Hermitian
connections $\nabla^{S(F)}$, $\nabla^{ \Lambda^*
 (F_1^\perp )}$, $\nabla^{ \Lambda^*
(F_2^\perp)}$ on $S(F)$, $\Lambda^*
(F_1^\perp)$, $\Lambda^* (F_2^\perp)$
respectively, preserving the corresponding ${\bf Z}_2$-gradings.
Let $\nabla^{W(F,F_1^\perp,F_2^\perp)}$ be the canonically induced
connection on $W(F,F_1^\perp,F_2^\perp)$ which preserves the
canonically induced Hermitian metric on
$W(F,F_1^\perp,F_2^\perp)$, and also the ${\bf Z}_2$-grading of
$W(F,F_1^\perp,F_2^\perp)$.

For any vector bundle $E$ over $M$, by an integral polynomial of $E$
we will mean a bundle $\phi(E)$ which is a polynomial in the
exterior and symmetric powers of $E$ with integral coefficients.

For $i=1,\, 2$, let $\phi_i(F_i^\perp)$ be an integral polynomial of
$F_i^\perp$. We   denote the complexification of $\phi_i(F_i^\perp)$
by the same notation. Then $\phi_i(F_i^\perp)$ carries a naturally
induced Hermitian metric from $g^{F_i^\perp}$ and also a naturally
induced Hermitian connection $\nabla^{\phi_i(F_i^\perp)}$ from
$\nabla^{F_i^\perp}$.

Let $W(F,F_1^\perp,F_2^\perp)\otimes \phi_1(F_1^\perp)\otimes
\phi_2(F_2^\perp)$ be the ${\bf Z}_2$-graded vector bundle over $M$,
\begin{multline}\label{1.41}
 W\left(F,F_1^\perp,F_2^\perp\right)\otimes \phi_1\left(F_1^\perp\right)\otimes
\phi_2\left(F_2^\perp\right)=W_+\left(F,F_1^\perp,F_2^\perp\right)\otimes
\phi_1\left(F_1^\perp\right)\otimes \phi_2\left(F_2^\perp\right)\\
\oplus W_-\left(F,F_1^\perp,F_2^\perp\right)\otimes
\phi_1\left(F_1^\perp\right)\otimes \phi_2\left(F_2^\perp\right).
\end{multline}
Let $\nabla^{W\otimes\phi_1\otimes\phi_2}$ denote the naturally
induced Hermitian connection on $W(F,F_1^\perp,F_2^\perp)\otimes
\phi_1(F_1^\perp)\otimes \phi_2(F_2^\perp)$ with respect to the
naturally induced Hermitian metric on it. Clearly,
$\nabla^{W\otimes\phi_1\otimes\phi_2}$ preserves the ${\bf
Z}_2$-graded decomposition  in (\ref{1.41}).

Let $S$ be the ${\rm End}(TM)$-valued one form on $M$ defined by
\begin{align}\label{1.42}
 \nabla^{TM}=\nabla^F+\nabla^{F_1^\perp}+\nabla^{F_2^\perp}+S.
\end{align}
Let $e_1,\,\cdots,\,e_{\dim M}$ be an orthonormal basis of $TM$. Let
$\nabla ^{F,\phi_1(F^\perp_1)\otimes \phi_2(F^\perp_2)}$ be the
Hermitian connection on $W (F,F_1^\perp,F_2^\perp )\otimes
\phi_1(F_1^\perp)\otimes \phi_2(F_2^\perp)$ defined by that for any
$X\in\Gamma(TM)$,
\begin{align}\label{1.42a}
 \nabla ^{F,\phi_1(F^\perp_1)\otimes \phi_2(F^\perp_2)}_X= \nabla_{X}^{W\otimes\phi_1\otimes\phi_2}+
 \frac{1}{4}\sum_{i,\,j=1}^{\dim M}\left\langle S(X)e_i,e_j\right\rangle  c\left(e_i\right)c\left(e_j\right).
\end{align}

Let the linear operator $D^{F,\phi_1(F^\perp_1)\otimes
\phi_2(F^\perp_2)}:\Gamma(W (F,F_1^\perp,F_2^\perp )\otimes \phi_1
(F_1^\perp )\otimes \phi_2 (F_2^\perp ))\rightarrow \Gamma(W
(F,F_1^\perp,F_2^\perp )\otimes \phi_1 (F_1^\perp )\otimes \phi_2
(F_2^\perp ))$ be    defined by 
\begin{align}\label{1.43}
D^{F,\phi_1(F^\perp_1)\otimes \phi_2(F^\perp_2)}=\sum_{i=1}^{\dim M}
c\left(e_i\right)\nabla_{e_i}^{F, \phi_1(F_1^\perp)\otimes
\phi_2(F_2^\perp)}.
\end{align}
We call $D^{F,\phi_1(F^\perp_1)\otimes \phi_2(F^\perp_2)}$ a
sub-Dirac operator with respect to the spin vector bundle $F$.

One verifies  that $D^{F,\phi_1(F^\perp_1)\otimes
\phi_2(F^\perp_2)}$ is a first order formally self-adjoint elliptic
differential operator. Moreover, it exchanges $\Gamma(W_\pm
(F,F_1^\perp,F_2^\perp )\otimes \phi_1 (F_1^\perp )\otimes \phi_2
(F_2^\perp ))$. We denote by $D_\pm^{F,\phi_1(F^\perp_1)\otimes
\phi_2(F^\perp_2)}$ the restrictions of
$D^{F,\phi_1(F^\perp_1)\otimes \phi_2(F^\perp_2)}$ to $\Gamma(W_\pm
(F,F_1^\perp,F_2^\perp )\otimes \phi_1 (F_1^\perp )\otimes \phi_2
(F_2^\perp ))$. 
Then one has
\begin{align}\label{1.44}
 \left(D_+^{F,\phi_1(F^\perp_1)\otimes \phi_2(F^\perp_2)}\right)^*=D_-^{F,\phi_1(F^\perp_1)\otimes \phi_2(F^\perp_2)} .
\end{align}

\begin{rem}\label{t1.40}  In the special case of $F=\{0\}$, the above sub-Dirac operator is simply the sub-Signature operator constructed in \cite{Z96} (cf. \cite{Z04}). On the other hand, in the case of $F_2^\perp=\{0\}$, the above sub-Dirac operator is constructed in \cite[Section 2]{LZ01}, which is sufficient for the proof of Theorem \ref{t0.2}. The sub-Dirac operator constructed above will be used in Section \ref{s2.6} to prove the Connes vanishing theorem, i.e., Theorem \ref{t0.1}.
\end{rem}

\begin{rem}\label{t1.4} When
$F_1^\perp$, $F_2^\perp$ are also spin and carry fixed spin
structures, then    $TM=F\oplus F^\perp_1\oplus F^\perp_2$ is spin
and carries an induced spin structure from the spin structures on
$F$, $F_1^\perp$ and $F_2^\perp$. Moreover, one has the following
identifications  of ${\bf Z}_2$-graded vector bundles (cf.
\cite{LaMi89}) for $i=1,\, 2$,
\begin{align}\label{1.44a}
  \Lambda^*_+\left(F_i^\perp\right)\oplus \Lambda^*_-\left(F_i^\perp\right)=S_+ \left(F_i^\perp\right)\otimes
  S
  \left(F_i^\perp\right)^*\oplus S_- \left(F_i^\perp\right)\otimes
  S
  \left(F_i^\perp\right)^*,
\end{align}
\begin{multline}\label{1.44b}
  \Lambda^{\rm even}\left(F_i^\perp\right)\oplus \Lambda^{\rm odd}
  \left(F_i^\perp\right)=\left(S_+ \left(F_i^\perp\right)\otimes
  S_+
  \left(F_i^\perp\right)^*\oplus S_- \left(F_i^\perp\right)\otimes
  S_-
  \left(F_i^\perp\right)^*\right)
\\ \oplus   \left(S_+ \left(F_i^\perp\right)\otimes
  S_-
  \left(F_i^\perp\right)^*\oplus S_- \left(F_i^\perp\right)\otimes
  S_+
  \left(F_i^\perp\right)^*\right). \end{multline}
By (\ref{1.33})-(\ref{1.43}), (\ref{1.44a}) and (\ref{1.44b}),
$D^{F,\phi_1(F^\perp_1)\otimes \phi_2(F^\perp_2)}$ is simply the
twisted   Dirac operator
\begin{multline}\label{1.45}
D^{F,\phi_1(F^\perp_1)\otimes \phi_2(F^\perp_2)}:
\Gamma\left(S(TM)\widehat{\otimes} S\left(F_2^\perp\right)^*\otimes
S\left(F_1^\perp\right)^*\otimes\phi_1\left(F^\perp_1\right)\otimes
\phi_2\left(F^\perp_2\right)
 \right)\\
 \longrightarrow \Gamma\left(S(TM)\widehat{\otimes }
S\left(F_2^\perp\right)^*\otimes
S\left(F_1^\perp\right)^*\otimes\phi_1\left(F^\perp_1\right)\otimes
\phi_2\left(F^\perp_2\right)
 \right),
\end{multline}
where for $i=1,\, 2$, the Hermitian (dual) bundle of spinors
$S(F_i^\perp)^*$  associated to $(F_i^\perp,g^{F_i^\perp})$ carries
the Hermitian connection induced from $\nabla^{F_i^\perp}$.
The point of (\ref{1.43}) is that it only requires $F$ being spin.
While on the other hand, (\ref{1.45}) allows us to take the
advantage of applying the calculations already done for usual
(twisted) Dirac operators when doing local computations.
\end{rem}

\begin{rem}\label{t1.4a} It is clear that the definition in
(\ref{1.43}) does not require that $F\subseteq TM$ being   integrable.  
\end{rem}

Let $\Delta^{F,\phi_1(F^\perp_1)\otimes \phi_2(F^\perp_2)}$ denote
the Bochner Laplacian defined by
\begin{align}\label{1.46}
\Delta^{F,\phi_1(F^\perp_1)\otimes
\phi_2(F^\perp_2)}=\sum_{i=1}^{\dim M}
\left(\nabla_{e_i}^{F,\phi_1(F^\perp_1)\otimes
\phi_2(F^\perp_2)}\right)^2-\nabla^{F,\phi_1(F^\perp_1)\otimes
\phi_2(F^\perp_2)}_{\sum_{i=1}^{\dim M}\nabla^{TM}_{e_i}e_i}.
  \end{align}

 Let
$k^{TM}$ be the scalar curvature of $g^{TM}$, $R^{F_i^\perp}$
($i=1,\, 2$) be the curvature of $\nabla^{F_i^\perp}$. Let
$R^{\phi_1(F_1^\perp)\otimes \phi_2(F_2^\perp)}$  be the curvature of the tensor product connection on $\phi_1(F_1^\perp)\otimes \phi_2(F_2^\perp)$ induced from
$\nabla^{\phi_1(F_1^\perp)}$ and $\nabla^{\phi_2(F_2^\perp)}$.

 In view of Remark \ref{t1.4}, the
following Lichnerowicz type formula holds:
\begin{multline}\label{1.47}
\left(D^{F,\phi_1(F^\perp_1)\otimes
\phi_2(F^\perp_2)}\right)^2=-\Delta^{F,\phi_1(F^\perp_1)\otimes
\phi_2(F^\perp_2)}+\frac{k^{TM}}{4}
+
\frac{1}{2}\sum_{i,\,j=1}^{\dim M}c\left(e_i\right)c\left(e_j\right)R^{ \phi_1(F^\perp_1)\otimes
\phi_2(F^\perp_2)}\left(e_i,e_j\right)
\\
+\frac{1}{8}\sum_{k=1}^2\sum_{i,\, j,\,s,\,t=1}^{\dim M} \left\langle
R^{F_k^\perp}\left(e_i,e_j\right)e_t,e_s\right\rangle
c\left(e_i\right)c\left(e_j\right)\widehat{c}\left(e_s\right)\widehat{c}\left(e_t\right) .
\end{multline}

When $M$ is compact,   by   the Atiyah-Singer index theorem
\cite{ASI} (cf. \cite{LaMi89}), one has
\begin{multline}\label{1.48}
{\rm ind}\left( D_+^{F,\phi_1(F^\perp_1)\otimes
\phi_2(F^\perp_2)}\right)\\ = 2^{\frac{q_1}{2} }\left\langle
\widehat{A}(F)\widehat{L}\left(F_1^\perp\right) 
e\left(F_2^\perp\right){\rm
ch}\left(\phi_1\left(F_1^\perp\right)\right){\rm
ch}\left(\phi_2\left(F_2^\perp\right)\right),[M]\right\rangle,
\end{multline}
where  $\widehat A(F)$ is the   Hirzebruch $\widehat A$-class (cf. \cite[\S 1.6.3]{Z01}) of $F$, $\widehat{L}(F_1^\perp)$   is the Hirzebruch
$\widehat{L}$-class (cf. \cite[(11.18') of Chap. III]{LaMi89}) of
$F_1^\perp$, $e(F_2^\perp)$ is the Euler class (cf. \cite[\S
3.4]{Z01}) of $F_2^\perp$, and ``${\rm ch}$'' is the notation for
the Chern character (cf. \cite[\S1.6.4]{Z01}).

\subsection{A vanishing theorem for   almost isometric
foliations}\label{s1.5} In this subsection, we assume $M$ is compact
and prove a vanishing theorem.  Some of the computations  in
this subsection will be used in the next section where we will deal
with the case where $M$ is non-compact.

Let $f_1,\, \cdots,\, f_q$ be an oriented orthonormal basis of $F$.
Let $h_1,\,\cdots,\,h_{q_1}$ (resp. $e_1,\,\cdots,\,e_{q_2}$) be an
oriented orthonormal basis of $F_1^\perp$ (resp. $F_2^\perp$).

Let $\beta>0$, $\varepsilon>0$ and   consider the construction in Section
\ref{s1.4} with respect to the metric $g^{TM}_{\beta,\varepsilon}$ defined
in (\ref{1.8}). We still use the superscripts ``$\beta$,
$\varepsilon$'' to decorate the geometric data associated to
$g^{TM}_{\beta,\varepsilon} $. For example, $D^{F,\phi_1(F^\perp_1)\otimes
\phi_2(F^\perp_2),\beta,\varepsilon }$ now denotes the sub-Dirac
operator constructed in (\ref{1.43}) associated  to
$g^{TM}_{\beta,\varepsilon} $. Moreover, it can be written as
\begin{multline}\label{1.431}
D^{F,\phi_1(F^\perp_1)\otimes
\phi_2(F^\perp_2),\beta,\varepsilon}=\beta^{-1}\sum_{i=1}^{q}
c\left(f_i\right)\nabla_{f_i}^{F, \phi_1(F_1^\perp)\otimes
\phi_2(F_2^\perp),\beta,\varepsilon}
+\varepsilon \sum_{j=1}^{q_1}
c\left(h_j\right)\nabla_{h_j}^{F, \phi_1(F_1^\perp)\otimes
\phi_2(F_2^\perp),\beta,\varepsilon}\\ + \sum_{s=1}^{q_2}
c\left(e_s\right)\nabla_{e_s}^{F, \phi_1(F_1^\perp)\otimes
\phi_2(F_2^\perp),\beta,\varepsilon}.
\end{multline}

By (\ref{1.431}), the Lichnerowicz type formula (\ref{1.47}) for
$(D^{F,\phi_1(F^\perp_1)\otimes \phi_2(F^\perp_2),\beta,\varepsilon })^2$
takes the following form (compare with
\cite[Theorem 2.3]{LZ01}),
\begin{multline}\label{1.49}
\left(D^{F,\phi_1(F^\perp_1)\otimes
\phi_2(F^\perp_2),\beta,\varepsilon}\right)^2=-\Delta^{F,\phi_1(F^\perp_1)\otimes
\phi_2(F^\perp_2),\beta,\varepsilon}+\frac{k^{TM,\beta,\varepsilon}}{4}
\\
+
\frac{1}{2\beta^2}\sum_{i,\,j=1}^qc\left(f_i\right)c\left(f_j\right)
R^{\phi_1(F^\perp_1)\otimes \phi_2(F^\perp_2),\beta,\varepsilon}\left(f_i,f_j\right)
+\frac{\varepsilon^2 }{2}\sum_{i,\,j=1}^{q_1}c\left(h_i\right)
c\left(h_j\right)R^{\phi_1(F^\perp_1)\otimes \phi_2(F^\perp_2),\beta,\varepsilon}\left(h_i,h_j\right) 
\end{multline}
$$
+\frac{1}{2}\sum_{i,\,j=1}^{q_2}c\left(e_i\right)c\left(e_j\right)R^{\phi_1(F^\perp_1)\otimes \phi_2(F^\perp_2),\beta,\varepsilon}\left(e_i,e_j\right)
+\frac{\varepsilon }{\beta}\sum_{i =1}^q\sum_{j
=1}^{q_1}c\left(f_i\right)c\left(h_j\right)R^{\phi_1(F^\perp_1)\otimes \phi_2(F^\perp_2),\beta,\varepsilon}\left(f_i,h_j\right)$$
$$
 +\frac{1}{\beta}\sum_{i =1}^q\sum_{j
=1}^{q_2}c\left(f_i\right)c\left(e_j\right)R^{\phi_1(F^\perp_1)\otimes \phi_2(F^\perp_2),\beta,\varepsilon}\left(f_i,e_j\right)
+\varepsilon \sum_{i =1}^{q_1}\sum_{j
=1}^{q_2}c\left(h_i\right)c\left(e_j\right)R^{\phi_1(F^\perp_1)\otimes \phi_2(F^\perp_2),\beta,\varepsilon}\left(h_i,e_j\right)
 $$
 $$
+\frac{1}{8\beta^2}\sum_{i,\, j=1}^q\sum_{s,\, t=1}^{q_1}\left\langle
R^{F_1^\perp,\beta,\varepsilon}\left(f_i,f_j\right)h_t,h_s\right\rangle
c\left(f_i\right)c\left(f_j\right)\widehat{c}\left(h_s\right)\widehat{c}\left(h_t\right)
$$
$$ +\frac{\varepsilon^2}{8}\sum_{i,\, j=1}^{q_1}\sum_{s,\,
t=1}^{q_1}\left\langle
R^{F_1^\perp,\beta,\varepsilon}\left(h_i,h_j\right)h_t,h_s\right\rangle
c\left(h_i\right)c\left(h_j\right)\widehat{c}\left(h_s\right)\widehat{c}\left(h_t\right)$$
$$
+\frac{1}{8}\sum_{i,\, j=1}^{q_2}\sum_{s,\,
t=1}^{q_1}\left\langle
R^{F_1^\perp,\beta,\varepsilon}\left(e_i,e_j\right)h_t,h_s\right\rangle
c\left(e_i\right)c\left(e_j\right)\widehat{c}\left(h_s\right)\widehat{c}\left(h_t\right)$$
$$
+\frac{\varepsilon }{4\beta}\sum_{i =1}^q\sum_{j=1
}^{q_1}\sum_{s,\, t=1}^{q_1}\left\langle
R^{F_1^\perp,\beta,\varepsilon}\left(f_i,h_j\right)h_t,h_s\right\rangle
c\left(f_i\right)c\left(h_j\right)\widehat{c}\left(h_s\right)\widehat{c}\left(h_t\right)$$
$$
+\frac{1}{4\beta}\sum_{i =1}^q\sum_{j=1 }^{q_2}\sum_{s,\,
t=1}^{q_1}\left\langle
R^{F_1^\perp,\beta,\varepsilon}\left(f_i,e_j\right)h_t,h_s\right\rangle
c\left(f_i\right)c\left(e_j\right)\widehat{c}\left(h_s\right)\widehat{c}\left(h_t\right)$$
$$
+\frac{\varepsilon }{4}\sum_{i=1
}^{q_1}\sum_{j=1 }^{q_2}\sum_{s,\, t=1}^{q_1}\left\langle
R^{F_1^\perp,\beta,\varepsilon}\left(h_i,e_j\right)h_t,h_s\right\rangle
c\left(h_i\right)c\left(e_j\right)\widehat{c}\left(h_s\right)\widehat{c}\left(h_t\right)
$$
$$
+\frac{1}{8\beta^2}\sum_{i,\, j=1}^q\sum_{s,\, t=1}^{q_2}\left\langle
R^{F_2^\perp,\beta,\varepsilon}\left(f_i,f_j\right)e_t,e_s\right\rangle
c\left(f_i\right)c\left(f_j\right)\widehat{c}\left(e_s\right)\widehat{c}\left(e_t\right)
$$
$$ +\frac{\varepsilon^{2}}{8}\sum_{i,\, j=1}^{q_1}\sum_{s,\,
t=1}^{q_2}\left\langle
R^{F_2^\perp,\beta,\varepsilon}\left(h_i,h_j\right)e_t,e_s\right\rangle
c\left(h_i\right)c\left(h_j\right)\widehat{c}\left(e_s\right)\widehat{c}\left(e_t\right)$$
$$
+\frac{1 }{8}\sum_{i,\, j=1}^{q_2}\sum_{s,\,
t=1}^{q_2}\left\langle
R^{F_2^\perp,\beta,\varepsilon}\left(e_i,e_j\right)e_t,e_s\right\rangle
c\left(e_i\right)c\left(e_j\right)\widehat{c}\left(e_s\right)\widehat{c}\left(e_t\right)$$
$$
+\frac{\varepsilon }{4\beta}\sum_{i=1 }^q\sum_{j=1
}^{q_1}\sum_{s,\, t=1}^{q_2}\left\langle
R^{F_2^\perp,\beta,\varepsilon}\left(f_i,h_j\right)e_t,e_s\right\rangle
c\left(f_i\right)c\left(h_j\right)\widehat{c}\left(e_s\right)\widehat{c}\left(e_t\right)$$
$$
+\frac{1}{4\beta}\sum_{i =1}^q\sum_{j=1 }^{q_2}\sum_{s,\,
t=1}^{q_2}\left\langle
R^{F_2^\perp,\beta,\varepsilon}\left(f_i,e_j\right)e_t,e_s\right\rangle
c\left(f_i\right)c\left(e_j\right)\widehat{c}\left(e_s\right)\widehat{c}\left(e_t\right)$$
$$
+\frac{\varepsilon }{4}\sum_{i
=1}^{q_1}\sum_{j=1 }^{q_2}\sum_{s,\, t=1}^{q_2}\left\langle
R^{F_2^\perp,\beta,\varepsilon}\left(h_i,e_j\right)e_t,e_s\right\rangle
c\left(h_i\right)c\left(e_j\right)\widehat{c}\left(e_s\right)\widehat{c}\left(e_t\right).
$$

 By (\ref{1.24}), (\ref{1.31}), (\ref{1.101}) and (\ref{1.49}), we get that when $\beta>0$,
 $\varepsilon>0$ are small,
 \begin{align}\label{1.50}
\left(D^{F,\phi_1(F^\perp_1)\otimes
\phi_2(F^\perp_2),\beta,\varepsilon}\right)^2=-\Delta^{F,\phi_1(F^\perp_1)\otimes
\phi_2(F^\perp_2),\beta,\varepsilon}+\frac{k^{F}}{4\beta^2}
+O\left( \frac{1 }{\beta}+\frac{\varepsilon^2 }{\beta^2}\right).
 \end{align}

 \begin{prop}\label{t1.6} If $k^F>0$ over $M$, then for any Pontrjagin classes
 $p(F_1^\perp)$,
 $p'(F_2^\perp)$  of $F_1^\perp$,
 $F_2^\perp$ respectively, the following identity holds,
 \begin{align}\label{1.51}
\left\langle\widehat{A}(F)p\left(F_1^\perp\right)e\left(F_2^\perp\right)p'\left(F_2^\perp\right),[M]\right\rangle
=0.
 \end{align}
 \end{prop}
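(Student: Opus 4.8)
The plan is to combine the Lichnerowicz type estimate \eqref{1.50} with the index formula \eqref{1.48}, run an adiabatic limit argument in the parameters $\beta$ and $\varepsilon$, and use the vanishing of the index to conclude. First I would specialize the integral polynomials to the ones recording the Pontryagin data: since any Pontryagin class $p(F_1^\perp)$ (resp. $p'(F_2^\perp)$) can be realized, up to the invertible factor $2^{q_1/2}$ and the universal classes $\widehat A(F)$, $\widehat L(F_1^\perp)$, $e(F_2^\perp)$ already present in \eqref{1.48}, as a rational combination of Chern characters of virtual bundles of the form $\phi_1(F_1^\perp)$, $\phi_2(F_2^\perp)$, it suffices to prove that $\mathrm{ind}(D_+^{F,\phi_1(F_1^\perp)\otimes\phi_2(F_2^\perp),\beta,\varepsilon})=0$ for all admissible $\phi_1,\phi_2$; the rational-coefficient issue is handled by the standard trick of clearing denominators, i.e.\ replacing $\phi_i$ by a suitable direct sum of tensor/exterior powers so that the desired class appears with integer coefficient, then inverting $\widehat L(F_1^\perp)$ and the $2^{q_1/2}$ factor at the level of the final characteristic-number identity.

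Next, the analytic heart: I would show that for $\beta,\varepsilon>0$ sufficiently small the operator $D^{F,\phi_1(F_1^\perp)\otimes\phi_2(F_2^\perp),\beta,\varepsilon}$ is invertible, hence has vanishing index. From \eqref{1.50} one has
\begin{align*}
\left(D^{F,\phi_1(F^\perp_1)\otimes\phi_2(F^\perp_2),\beta,\varepsilon}\right)^2
=-\Delta^{F,\phi_1(F^\perp_1)\otimes\phi_2(F^\perp_2),\beta,\varepsilon}
+\frac{k^F}{4\beta^2}+O\!\left(\frac1\beta+\frac{\varepsilon^2}{\beta^2}\right).
\end{align*}
Since $M$ is compact and $k^F>0$ everywhere, there is $c>0$ with $k^F\geq c$ on $M$; the Bochner Laplacian term is nonnegative, so for any section $s$,
\begin{align*}
\left\|D^{F,\phi_1(F^\perp_1)\otimes\phi_2(F^\perp_2),\beta,\varepsilon}s\right\|^2
\geq\left(\frac{c}{4\beta^2}-\frac{C}{\beta}-\frac{C\varepsilon^2}{\beta^2}\right)\|s\|^2
\end{align*}
for a constant $C$ independent of $\beta,\varepsilon$. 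Fixing $\varepsilon$ small enough that $C\varepsilon^2<c/8$ and then $\beta$ small enough that $C/\beta<c/(8\beta^2)$, i.e.\ $\beta<c/(8C)$, the right-hand coefficient is $\geq c/(8\beta^2)>0$, so the operator is invertible. Therefore $\mathrm{ind}(D_+^{F,\phi_1(F_1^\perp)\otimes\phi_2(F_2^\perp),\beta,\varepsilon})=0$.

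Finally I would feed this vanishing back into \eqref{1.48}. The index is a topological invariant, independent of the metric, so its value computed with $g^{TM}_{\beta,\varepsilon}$ equals its value for $\beta=\varepsilon=1$; hence the right-hand side of \eqref{1.48} vanishes for every choice of $\phi_1(F_1^\perp)$, $\phi_2(F_2^\perp)$:
\begin{align*}
\left\langle\widehat A(F)\,\widehat L(F_1^\perp)\,e(F_2^\perp)\,\ch(\phi_1(F_1^\perp))\,\ch(\phi_2(F_2^\perp)),[M]\right\rangle=0.
\end{align*}
Letting $\ch(\phi_1(F_1^\perp))$ range over enough virtual bundles to span, after multiplication by the invertible class $\widehat L(F_1^\perp)^{-1}$, all of $p(F_1^\perp)$ (and similarly exhausting $p'(F_2^\perp)$ via $\phi_2$), one extracts \eqref{1.51}. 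I expect the main obstacle to be purely bookkeeping rather than conceptual: verifying that the polynomials $\phi_1,\phi_2$ can indeed be chosen so that, after dividing by $\widehat L(F_1^\perp)$ and by $2^{q_1/2}$, an arbitrary monomial in the Pontryagin classes of $F_1^\perp$ and $F_2^\perp$ (times $e(F_2^\perp)$) is obtained — this is the familiar ``the Chern character is a rational isomorphism, so clear denominators'' argument, and one must be a little careful that the $e(F_2^\perp)$ factor and the distinction between $\widehat L$ and $\widehat A$ are absorbed correctly; the analytic estimate above is the substantive point but is immediate once \eqref{1.50} is in hand.
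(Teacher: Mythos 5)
Your proposal is correct and follows essentially the same route as the paper's proof: use the Lichnerowicz-type estimate \eqref{1.50} with $k^F>0$ to get invertibility of the deformed sub-Dirac operator for small $\beta,\varepsilon$, conclude vanishing of the index, feed this into the index formula \eqref{1.48}, and then extract arbitrary Pontrjagin monomials from the Chern characters by the standard ``rational span'' argument (inverting the multiplicatively invertible class $\widehat L(F_1^\perp)$ and clearing denominators). Your ordering of the limits (fix $\varepsilon$ small, then $\beta$ small) is a valid instance of the paper's ``take $\beta,\varepsilon$ small enough,'' and your appeal to the topological invariance of the index in $\beta,\varepsilon$ is exactly what the paper uses implicitly when applying \eqref{1.48} to the deformed operator.
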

 \begin{proof}Since $k^F>0$ over $M$, one can take $\beta>0$, $\varepsilon>0$
 small enough so that the corresponding terms in the right hand side of
 (\ref{1.50}) verifies that
 \begin{align}\label{1.52}
 \frac{k^{F}}{4\beta^2} +O\left( \frac{1 }{\beta}+\frac{\varepsilon^2 }{\beta^2}\right)>0
 \end{align}
 over $M$.
Since  $-\Delta^{F,\phi_1(F^\perp_1)\otimes
\phi_2(F^\perp_2),\beta,\varepsilon}$ is   nonnegative, by
(\ref{1.44}), (\ref{1.50}) and (\ref{1.52}), one gets
\begin{align}\label{1.53}
{\rm ind}\left( D_+^{F,\phi_1(F^\perp_1)\otimes
\phi_2(F^\perp_2),\beta,\varepsilon}\right)= 0.
\end{align}

From (\ref{1.48}) and (\ref{1.53}), we get
\begin{align}\label{1.54}\left\langle
\widehat{A}(F)\widehat{L}\left(F_1^\perp\right){\rm
ch}\left(\phi_1\left(F_1^\perp\right)\right) 
e\left(F_2^\perp\right){\rm
ch}\left(\phi_2\left(F_2^\perp\right)\right),[M]\right\rangle=0.
\end{align}

Now as it is standard that any rational Pontrjagin class of $F_1^\perp$
(resp. $F_2^\perp$)  can be expressed as a rational linear
combination of   classes of the form $\widehat{L} (F_1^\perp
){\rm ch} (\phi_1 (F_1^\perp ) )$ (resp. $ {\rm ch} (\phi_2 (F_2^\perp ) )$), one gets (\ref{1.51}) from
(\ref{1.54}).
 \end{proof}

 \begin{rem}\label{t1.7}  If one changes the ${\bf Z}_2$-grading in the definition of the sub-Dirac operator by replacing
 $\widetilde{\tau}(F_2^\perp)$ in (\ref{1.39}) by $\tau(F_2^\perp,
 g^{F_2^\perp})$, then one can prove that under the same condition
 as in Proposition \ref{t1.6},  
\begin{align}\label{1.54a}\left\langle
\widehat{A}(F)
p\left(F_1^\perp\right)p'\left(F_2^\perp\right),[M]\right\rangle=0
\end{align}
for any Pontrjagin classes $p(F_1^\perp)$, $p'(F_2^\perp)$ of
$F_1^\perp$, $F_2^\perp$.
 \end{rem}

\section{Connes fibration and vanishing theorems}\label{s2}

 This section is organized as follows. In Section \ref{s2.1}, we
recall the definition of the Connes fibration and prove some basic properties of it.  
In Section \ref{s2.3}, we introduce a specific deformation of the     sub-Dirac operator on the Connes fibration and prove a key  vanishing result for the deformed sub-Dirac operator on certain compact subsets of the Connes fibration. This motivates the proof of Theorem \ref{t0.2} for the case of $\dim M=4k$ given in    Section \ref{s2.4}. In Section \ref{s2.5}, we present the proof of the $\dim M=8k+i$ ($i=1,\ 2$) cases of Theorem \ref{t0.2}. Finally, in Section \ref{s2.6} we present the proof of Theorem \ref{t0.1}, and   state some new vanishing results.

\subsection{The Connes fibration
}\label{s2.1} 

Let $(M,F)$ be a compact foliation, i.e., $F$ is an
integrable subbundle of the tangent vector bundle $TM$ of a closed
manifold $M$.  
For any  vector space $E$ of rank $n$, let $\mE$ be the
set of all Euclidean metrics on $E$. It is well known that $\mE$
is the noncompact homogeneous space $GL(n,{\bf R})^+/SO(n)$ (with $\dim
\mE=\frac{n(n+1)}{2}$), which carries a
natural Riemannian metric of nonpositive sectional curvature (cf. \cite{He}). In particular, any two points of $\mE$ can be joined
by a unique geodesic.

Following  \cite[\S 5]{Co86}, let $\pi:\mM\rightarrow M$ be the
fibration over $M$ such that for any $x\in M$, $\mM_x=\pi^{-1}(x)$
is the space of Euclidean metrics on the linear space $T_xM/F_x$.

Let  $T^V\mM$ denote the vertical tangent bundle of the fibration
$\pi:\mM\rightarrow M$. Then it carries a natural metric
$g^{T^V\mM}$   such that   any two points $p,\,
q\in\mM_x$, with $x\in M$,  can be joined by a unique geodesic in
$\mM_x$. Let $d^{\mM_x}(p,q)$ denote the length of this geodesic.

By  using the Bott connection
  on $TM/F$ (cf. (\ref{1.0a})), which is leafwise flat, one  lifts $F$ to an integrable subbundle
$\mF$ of $T\mM$.{\footnote{Indeed,
the Bott connection on $TM/F$ determines an integrable lift
$\widetilde{\mF}$ of $F$ in $T\widetilde{\mM}$, where
$\widetilde{\mM}=GL(TM/F)^+$ is the $GL(q_1,{\bf R})^+$ (with
$q_1={\rm rk}(TM/F)$) principal bundle of oriented frames  over
$M$. Now as $\widetilde{\mM}$ is a principal $SO(q_1)$ bundle over
$\mM$, $\widetilde{\mF}$ determines an integrable subbundle $\mF$
of $T\mM$.}  }
  Let $g^F$ be a Euclidean metric on $F$, which  lifts to a Euclidean metric $g^\mF=\pi^*g^F $ on $\mF$.

For any $v\in\mM$, $T_v\mM/(\mF_v\oplus T^V_v\mM)$ is identified with
$T_{\pi(v)}M/F_{\pi(v)}$ under the projection $\pi:\mM\rightarrow
M$. By definition, $v$ determines a metric on
$T_{\pi(v)}M/F_{\pi(v)}$, which in turn determines a metric on
$T_v\mM/(\mF_v\oplus T^V_v\mM)$. In this way, $T\mM/(\mF \oplus T^V
\mM)$ carries a canonically induced metric.

Let $\mF_1^\perp\subseteq T\mM$ be a subbundle, which is  transversal to $\mF\oplus T^V\mM$,   such that we have a
splitting $T\mM=(\mF \oplus T^V \mM)\oplus\mF_1^\perp$. Then
$\mF_1^\perp$ can be identified with $T\mM/(\mF \oplus T^V \mM)$
and carries a canonically induced metric $g^{\mF_1^\perp}$.
We denote from now on that $\mF_2^\perp=T^V\mM$.

Let $g^{T\mM}$ be the Riemannian metric on $\mM$ defined by the
following orthogonal splitting,
\begin{align}\label{2.1a}\begin{split}
       T\mM =   \mF\oplus \mF^\perp_1\oplus \mF^\perp_2,\ \ \ \ \ \
g^{T\mM }=   g^{\mF}\oplus g^{\mF^\perp_1}\oplus
g^{\mF^\perp_2}.\end{split}
\end{align}
Let   $p_2^\perp$ be the orthogonal projection from $T\mM$ to  $\mF_2^\perp$.  Let $\nabla^{T\mM}$ be the Levi-Civita connection of $g^{T\mM}$. Then $\nabla^{\mF_2^\perp}=p_2^\perp \nabla^{T\mM}p_2^\perp$ is a Euclidean connection on $\mF_2^\perp$ not depending on $g^\mF$ and $g^{\mF_1^\perp}$.

 By   \cite[Lemma 5.2]{Co86}, $(\mM,\mF)$
admits  an almost isometric structure with respect to the metrics given by
(\ref{2.1a}). In particular, for any $X\in \Gamma(\mF)$,
$U_i,\ V_i\in\Gamma(\mF_i^\perp)$ with $i=1,\ 2$, one has by  (\ref{1.5}) that                
\begin{align}\label{1.5x} \begin{split}
       \langle [X,U_i],V_i\rangle+\langle U_i,[X,V_i]\rangle =X\langle U_i,V_i\rangle,\\
\langle [X,U_2],U_1\rangle =0.\end{split}
\end{align}

Take a metric on ${TM/F}$. This is
equivalent to taking an embedded section $s:M\hookrightarrow \mM$
of the Connes fibration $\pi:\mM\rightarrow M$. Then we have a
canonical inclusion  $s(M)\subset \mM$.  

For any $p\in \mM\setminus s(M)$,  we connect $p$ and $s(\pi(p))\in s(M)$ by the unique geodesic in $\mM_{\pi(p)}$. 
Let $\sigma(p)\in\mF_{2}^\perp|_p$ denote the unit vector tangent to this geodesic.  
Let $\rho(p)=d^{\mM_{\pi(p)}}(p,s(\pi(p)))$ denote the length of this geodesic. 

The following simple result  will play a  key role in what follows.

\begin{lemma}\label{t2.1}
 There exists   $A_1>0$, depending only on   the embedding $s:M\hookrightarrow \mM$, such that for any $X\in\Gamma(\mF)$ with $|X|\leq 1$, the following pointwise inequalities hold on $\mM\setminus s(M)$,
\begin{align}\label{2.1}
|X(\rho)|\leq A_1,
\end{align}
\begin{align}\label{2.2}
\left| \nabla^{\mF_2^\perp}_X\sigma\right|\leq \frac{A_1}{\rho}.
\end{align}
In particular, the following inequality holds on $\mM$, 
\begin{align}\label{2.2a}
\left| \nabla^{\mF_2^\perp}_X(\rho\sigma)\right|\leq  2{A_1} .
\end{align}
\end{lemma}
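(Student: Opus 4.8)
The plan is to deduce all three inequalities from a single pointwise bound: $|\nabla^{\mF_2^\perp}_X\xi|\le A_1$, where $\xi:=\rho\sigma$ and $X\in\Gamma(\mF)$, $|X|\le 1$. I would first record that each fibre $\mM_x$, being a complete simply connected Riemannian manifold of nonpositive sectional curvature (a copy of $GL(q_1,{\bf R})^+/SO(q_1)$), is Cartan--Hadamard; hence the fibrewise exponential map $\exp^{\mM_x}_v$ is a diffeomorphism, and (after fixing the convention that $\sigma(p)$ points from $p$ towards $s(\pi(p))$, the opposite case being identical) $\xi(v)=(\exp^{\mM_{\pi(v)}}_v)^{-1}(s(\pi(v)))$ is a well-defined \emph{smooth} section of $\mF_2^\perp=T^V\mM$ over all of $\mM$, vanishing precisely on $s(M)$, with $|\xi|=\rho$ and $\xi/\rho=\sigma$ off $s(M)$. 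Granting the bound, the assertions are formal: since $\nabla^{\mF_2^\perp}=p_2^\perp\nabla^{T\mM}p_2^\perp$ is compatible with $g^{\mF_2^\perp}$, one has $X(\rho)=\tfrac1{2\rho}X(|\xi|^2)=\langle\nabla^{\mF_2^\perp}_X\xi,\sigma\rangle$, which gives $(\ref{2.1})$; differentiating $\sigma=\xi/\rho$ yields $\nabla^{\mF_2^\perp}_X\sigma=\tfrac1\rho\big(\nabla^{\mF_2^\perp}_X\xi-\langle\nabla^{\mF_2^\perp}_X\xi,\sigma\rangle\sigma\big)$, the component of $\rho^{-1}\nabla^{\mF_2^\perp}_X\xi$ orthogonal to $\sigma$, which gives $(\ref{2.2})$; and $(\ref{2.2a})$ follows from $\nabla^{\mF_2^\perp}_X(\rho\sigma)=X(\rho)\,\sigma+\rho\,\nabla^{\mF_2^\perp}_X\sigma$ together with $(\ref{2.1})$ and $(\ref{2.2})$, the estimate persisting on $s(M)$ by continuity of $\nabla^{\mF_2^\perp}\xi$.

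To prove the bound, I would fix $v\notin s(M)$ and compute $\nabla^{\mF_2^\perp}_X\xi|_v=\tfrac D{dt}\big|_0\xi(\gamma(t))$ along a well-chosen curve $\gamma$ with $\dot\gamma(0)=X_v$. Choose a short curve $\bar\gamma$ in the leaf of $F$ through $\pi(v)$ with $\dot{\bar\gamma}(0)=d\pi(X_v)$ — so $|\dot{\bar\gamma}(0)|_{g^F}=|X_v|_{g^\mF}\le 1$ — let $\tau_t$ be the Bott parallel transport of $TM/F$ along $\bar\gamma|_{[0,t]}$, and put $\gamma(t):=\hat\tau_t(v)$, where $\hat\tau_t:\mM_{\pi(v)}\to\mM_{\bar\gamma(t)}$ is the isometry of spaces of Euclidean metrics induced by the linear isomorphism $\tau_t$; this $\hat\tau_t$ is an isometry because the natural metric on the space of Euclidean metrics of a vector space is $GL$-invariant (cf. \cite[Lemma 5.2]{Co86}), and $\gamma$ is precisely the Bott lift of $\bar\gamma$ used to construct $\mF$, so $\gamma$ runs in the leaf of $\mF$ through $v$ and $\dot\gamma(0)=X_v$. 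Two facts then combine. First, in leaf directions $\nabla^{\mF_2^\perp}=p_2^\perp\nabla^{T\mM}p_2^\perp$ coincides with the Bott connection $U_2\mapsto p_2^\perp[X,U_2]$: this comes out of the Koszul formula $(\ref{1.7})$, the almost-isometric identities $(\ref{1.5x})$, and Condition (C) — which the Connes fibration satisfies since $\mF_2^\perp=T^V\mM$ is integrable — so, by the leafwise flatness of this connection (Lemma \ref{t1.2}) and the construction of $\mF$, $\nabla^{\mF_2^\perp}$-parallel transport along $\gamma$ is exactly $(\hat\tau_t)_*$. Second, working in the \emph{fixed} fibre $\mM_{\pi(v)}$ and using that $\hat\tau_t$ commutes with the exponential maps, $\xi(\gamma(t))=(\hat\tau_t)_*\big[(\exp^{\mM_{\pi(v)}}_v)^{-1}(s_t)\big]$ with $s_t:=\hat\tau_t^{-1}(s(\bar\gamma(t)))$ a smooth curve in $\mM_{\pi(v)}$ through $s(\pi(v))$ whose speed $|\dot s_0|$ is bounded by a constant depending only on $s$ (through $|ds|$ and the Bott connection form on the compact $M$, together with $|\dot{\bar\gamma}(0)|\le 1$). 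Combining the two, $\nabla^{\mF_2^\perp}_X\xi|_v=\tfrac d{dt}\big|_0(\exp^{\mM_{\pi(v)}}_v)^{-1}(s_t)$; since on a Cartan--Hadamard manifold the inverse exponential map is $1$-Lipschitz (CAT(0) comparison), its norm is $\le|\dot s_0|\le A_1$, uniformly in $v$.

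The one genuinely delicate step is the first of the two facts above — that $\nabla^{\mF_2^\perp}$ transports along $\mF$-leaves \emph{exactly} by the Bott holonomy $(\hat\tau_t)_*$, with no residual infinitesimal rotation of the vertical bundle. It is precisely the absence of this rotation that makes the bound independent of $\rho(v)$, and hence makes $(\ref{2.2})$ genuinely $O(1/\rho)$ rather than merely $O(1)$; everything else is the $1$-Lipschitz property of $\exp^{-1}$ in nonpositive curvature together with the uniformity of the constants, which is automatic because $M$ is compact.
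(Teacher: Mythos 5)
Your proof is correct and gives all three estimates, but it organizes the argument differently from the paper in a way worth noting. The paper treats the two inequalities \eqref{2.1} and \eqref{2.2} separately: for \eqref{2.1} it works directly with distances via the triangle inequality and an auxiliary first-order estimate $\rho(\widetilde\phi_t(x))\le A_0 t$ on $s(M)$; for \eqref{2.2} it expresses $\nabla^{\mF_2^\perp}_X\sigma$ as the Lie derivative $[X,\sigma]$ (using $p_2^\perp\nabla^{T\mM}_U X=0$), then bounds $|\sigma-(\widetilde\phi_t)_*\sigma|$ by a geodesic-triangle estimate and the comparison form of the law of cosines in nonpositive curvature (the angle estimate \eqref{2.11}). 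You instead introduce the single smooth vector field $\xi=\rho\sigma=(\exp^{\mM_{\pi(v)}}_v)^{-1}(s(\pi(v)))\in\Gamma(\mF_2^\perp)$ and derive \eqref{2.1}, \eqref{2.2}, \eqref{2.2a} all as formal consequences of one bound $|\nabla^{\mF_2^\perp}_X\xi|\le A_1$, which you obtain from the Cartan--Hadamard fact that $\exp_v^{-1}$ is globally $1$-Lipschitz. The geometric inputs are the same in both proofs --- the leafwise holonomy acts by fibrewise isometries, the fibres are nonpositively curved, and $M$ is compact --- but your packaging is cleaner: it replaces two separate estimates with one, trades the law-of-cosines angle comparison for the more basic expanding property of $\exp_v$, and makes transparent the point you rightly flag, that the identification of $\nabla^{\mF_2^\perp}$-parallel transport along $\mF$-leaves with the Bott holonomy $(\hat\tau_t)_*$ (using \eqref{2.8}, Condition~(C), and Lemma~\ref{t1.2}) is what kills the potential $\rho$-growth and makes the constant uniform in $v$. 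One small cosmetic remark: your derivation of $\dot s_0$ is most cleanly done by noting $s_t=(\widetilde\phi_t)^{-1}(s(\bar\gamma(t)))$ is a smooth curve in the fixed fibre $\mM_{\pi(v)}$ with $\dot s_0=-X_{s_0}+ds(\dot{\bar\gamma}(0))$, automatically vertical, and bounded by compactness of $s(M)$ --- which is exactly the paper's Lemma~\ref{t3.1a} in disguise.
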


\begin{proof} Since the estimates to be proved are local, we may well assume that there is $Y\in\Gamma(F)$ over  $M$, with $|Y|\leq 1$,  such that $X=\pi^*Y$.
Let $\phi_t$ (resp. $\widetilde\phi_t$), $t\in{\bf R}$, be the one-parameter group of diffeomorphisms on $M$ (resp. $\mM$) generated by $Y$ (resp. $X=\pi^*Y$).  Then $\widetilde\phi_t$ is the lift of $\phi_t$. 

Take any $p\in\mM\setminus s(M)$. By \cite[Lemma 5.2]{Co86} and (\ref{1.5x}), one sees that each  $\widetilde\phi_t$ maps 
the fiber $\mM_{\pi(p)}$ isometrically to the fiber $\mM_{\phi_t(\pi(p))}$. Thus, it maps
the geodesic connecting $p$ and $s(\pi(p))$ in $\mM_{\pi(p)}$ to the geodesic connecting $\widetilde\phi_t(p)$ and $\widetilde\phi_t(s(\pi(p)))$ in $\mM_{\phi_t(\pi(p))}$, such that  
$
 \rho(p)=d^{\mM_{\phi_t(\pi(p))}}  ( \widetilde\phi_t(p),\widetilde\phi_t(s(\pi(p))) ) .
$
 Thus, one has
 \begin{multline}\label{2.7}
\left|\rho\left(\widetilde\phi_t(p)\right)- \rho(p)\right|=\left|d^{\mM_{\phi_t(\pi(p))}} \left( \widetilde\phi_t(p), s(\phi_t(\pi(p)))\right)
-   d^{\mM_{\phi_t(\pi(p))}} \left( \widetilde\phi_t(p),\widetilde\phi_t(s(\pi(p)))\right) \right|
\\
\leq d^{\mM_{\phi_t(\pi(p))}} \left(s(\phi_t(\pi(p))),\widetilde\phi_t(s(\pi(p)))\right) 
=\rho\left(\widetilde\phi_t(s(\pi(p)))\right). 
\end{multline}

Since at $p$ one has $X(\rho) =\lim_{t\rightarrow 0^+} \frac{ \rho (\widetilde\phi_t(p) )- \rho(p)}{t}$, (\ref{2.1}) follows from (\ref{2.7}) and the following lemma. 

\begin{lemma}\label{t3.1a} 
There exist $c_0,\,A_0>0$, depending only on the embedding $s:M\hookrightarrow \mM$, such that for any $x\in s(M)$ and  $0\leq t\leq c_0$, one has
\begin{align}\label{2.5}
 \rho\left(\widetilde\phi_t(x)\right)  \leq A_0t.
\end{align}
\end{lemma}
\begin{proof} Take any  $x\in s(M)$. 
If $t=0$, then (\ref{2.5}) clearly holds. Recall that $\widetilde\phi_t$ maps 
  $\mM_{\pi(p)}$ isometrically to   $\mM_{\phi_t(\pi(p))}$. Thus
one has
\begin{align}\label{2.5a}
\rho\left(\widetilde\phi_t(x)\right)=\rho\left(\widetilde\phi_t^{-1}(s(\phi_t(\pi(x))))\right).
\end{align}
Since $\widetilde\phi_t^{-1}(s(\phi_t(\pi(x))))$ depends smoothly on $t$,  one sees from (\ref{2.5a}) that   (\ref{2.5}) holds at $x\in s(M)$. By the compactness of $s(M)$, it holds for all $x\in s(M)$. 
\end{proof}

To prove (\ref{2.2}), we first observe that by (\ref{1.5x}) one has that for any 
$U \in\Gamma(\mF_2^\perp)$, the following identity holds (cf. (\ref{f7})), 
 \begin{align}\label{2.8}
p_2^\perp \nabla^{T\mM}_UX =0. 
\end{align}

From (\ref{2.8}) and the fact that $[X,\sigma]=[\pi^*Y,\sigma]\in\Gamma(\mF_2^\perp)$ (cf. \cite[Lemma 10.7]{BeGeVe}), one sees that in order to prove (\ref{2.2}), one need only to prove that
 \begin{align}\label{2.9}
|[X,\sigma]|\leq \frac{A_1}{\rho}. 
\end{align}

To prove (\ref{2.9}), recall that   (cf. \cite[Theorem 2.3 of Chapter 6]{CCL})
 \begin{align}\label{2.9a}
[X,\sigma] = \lim_{t\rightarrow 0^+}\frac{\sigma - \left(\widetilde\phi_{t}\right)_*\sigma}{t}. 
\end{align}

Since $\widetilde\phi_t$ maps geodesics in $\mM_{\phi_{-t}(\pi(p))}$ to geodesics in $\mM_{\pi(p)}$, one sees as in \cite[\S 5]{Co86} that at $p\in\mM\setminus s(M)$, $(\widetilde\phi_{t} )_*\sigma$ is the unit vector tangent to the geodesic connecting  $p$ and  $\widetilde\phi_t(s
(\phi_{-t}(\pi(p))))$.

Consider the geodesic triangle in $\mM_{\pi(p)}$ with   vertices $p, \, s(\pi(p))$ and $\widetilde\phi_t(s(\phi_{-t}(\pi(p))))$. Let $\alpha_p$ be the angle at $p$. Then one has
 \begin{align}\label{2.10}
\left| \sigma - \left(\widetilde\phi_{t}\right)_*\sigma\right|^2=2\left(1-\cos \left(\alpha_p\right)\right)  . 
\end{align}

Since $\mM_{\pi(p)}$ is of nonpositive curvature, one has (cf. \cite[Corollary I.13.2]{He}),
 \begin{align}\label{2.11}
\left( \rho\left( \widetilde\phi_t\left(s\left(\phi_{-t}(\pi(p))\right)\right) \right)\right)^2\geq 2\left(1-\cos \left(\alpha_p\right)\right)\rho(p)\,
   d^{\mM_{\pi(p)}}\left( p, \widetilde\phi_t\left(s\left(\phi_{-t}(\pi(p))\right)\right) \right) .
\end{align}
From (\ref{2.10}) and (\ref{2.11}), one gets
 \begin{align}\label{2.12}
\left| \sigma - \left(\widetilde\phi_{t}\right)_*\sigma\right|\leq \frac{
 \rho\left( \widetilde\phi_t\left(s\left(\phi_{-t}(\pi(p))\right)\right) \right) }
{\sqrt{\rho(p)\,
   d^{\mM_{\pi(p)}}\left( p, \widetilde\phi_t\left(s\left(\phi_{-t}(\pi(p))\right)\right) \right)} }.
\end{align}

From (\ref{2.9a}), (\ref{2.12}) and proceed as in Lemma \ref{t3.1a}, one gets (\ref{2.9}).  
\end{proof}

\subsection{Sub-Dirac operators and the vanishing on compact subsets}\label{s2.3}
From now on we assume that there is  $\delta>0$  such that $k^F\geq \delta$ over $M$.
We  also assume that $M$ is spin and carries a fixed spin structure, then $\mF\oplus \mF_1^\perp=\pi^*(TM)$ is spin and carries an induced spin structure.   For simplicity, we also assume first that $\mF_2^\perp$ is  oriented and both   $TM$ and $\mF_2^\perp$ are of even rank.

For any $  \beta,\ \varepsilon>0$,  following (\ref{1.8}), let $g_{\beta,\varepsilon}^{T\mM}$ be the deformed  metric of  (\ref{2.1a})  on $\mM$  defined by
the
  orthogonal splitting,
\begin{align}\label{2.20}\begin{split}
       T\mM =   \mF\oplus \mF^\perp_1\oplus \mF^\perp_2,  \ 
\  \  \
g^{T\mM}_{\beta,\varepsilon}= \beta^2   g^{\mF}\oplus\frac{
g^{\mF^\perp_1}}{ \varepsilon^2 }\oplus g^{\mF^\perp_2}.\end{split}
\end{align}

In what follows, we will use the subscripts (or superscripts) $\beta,\ \varepsilon$ to decorate the geometric objects with respect to the deformed metric $g_{\beta,\varepsilon}^{T\mM}$.  It is clear that for any $X\in \mF\oplus\mF_1^\perp$ and $U\in \mF_2^\perp$, $c_{\beta,\varepsilon}(X)$, $c(U)$ and $\widehat c(U)$ act on $S_{\beta,\varepsilon}(\mF\oplus\mF_1^\perp)\widehat\otimes\Lambda^* (\mF_2^\perp )$ and exchange $(S_{\beta,\varepsilon}(\mF\oplus\mF_1^\perp)\widehat\otimes\Lambda^* (\mF_2^\perp ))_\pm$. 

Let $f_1,\,\cdots,\,f_q$ (resp. $h_1,\,\cdots,\,h_{q_1}$; resp. $e_1,\,\cdots,\,e_{q_2}$) be an orthonormal basis of $(\mF,g^{\mF})$ (resp. $(\mF_1^\perp,g^{\mF_1^\perp})$; resp. $ (\mF_2^\perp,g^{\mF_2^\perp})$). 
By proceeding as in \cite[Section 2]{LZ01} and Sections \ref{s1.4}, \ref{s1.5}, we construct  the   sub-Dirac operator (cf. (\ref{1.43}) and (\ref{1.431}), where we take $F$ in (\ref{1.43}) to be $\mF\oplus\mF_1^\perp$, $F^\perp_1$ in (\ref{1.43}) to be zero and $F^\perp_2$ in (\ref{1.43}) to be $\mF_2^\perp$)
\begin{align}\label{2.21}
D_{\mF\oplus\mF_1^\perp,\beta,\varepsilon}:\Gamma\left(S_{\beta,\varepsilon} (\mF\oplus\mF_1^\perp)\widehat\otimes\Lambda^*\left(\mF_2^\perp\right)\right)
\longrightarrow
\Gamma\left(S_{\beta,\varepsilon}(\mF\oplus\mF_1^\perp)\widehat\otimes\Lambda^*\left(\mF_2^\perp\right)\right)
\end{align}
given by
\begin{align}\label{2.36} 
  D_{\mF\oplus\mF_1^\perp,\beta,\varepsilon}=\sum_{i=1}^q \beta^{-1}c_{\beta,\varepsilon}\left(\beta^{-1}f_i\right)\nabla^{\beta,\varepsilon}_{f_i} + \sum_{s=1}^{q_1} \varepsilon\, c_{\beta,\varepsilon}\left(\varepsilon h_s\right)\nabla^{\beta,\varepsilon}_{h_s} 
 +\sum_{j=1}^{q_2}  c \left(e_j\right)\nabla^{\beta,\varepsilon}_{e_j}  ,
\end{align}
where as in (\ref{1.431}),  $\nabla^{\beta,\varepsilon}$ is the canonical connection on $S_{\beta,\varepsilon}(\mF\oplus\mF_1^\perp)\widehat\otimes \Lambda^*(\mF_2^\perp)$ determined by    (\ref{1.42a}) with respect to  $g_{\beta,\varepsilon}^{T\mM}$. In particular, in view of Remark \ref{t1.4}, one has
\begin{align}\label{2.36a} 
  \left[ \nabla^{\beta,\varepsilon} , \widehat c(\sigma)\right]= \widehat c\left(\nabla^{\mF_2^\perp}\sigma\right ).
\end{align}

Let $D_{\mF\oplus\mF_1^\perp,\beta,\varepsilon,\pm}$  be the restrictions of $D_{\mF\oplus\mF_1^\perp,\beta,\varepsilon}$ on  $(S_{\beta,\varepsilon}(\mF\oplus\mF_1^\perp)\widehat\otimes\Lambda^* (\mF_2^\perp ))_\pm$,
then  
\begin{align}\label{2.14}
\left(D_{\mF\oplus\mF_1^\perp,\beta,\varepsilon,+}\right)^*  = D_{\mF\oplus\mF_1^\perp,\beta,\varepsilon,-}. 
\end{align}

 For any $R>0$, denote
\begin{align}\label{2.151}
\mM_{R} =\{p\in \mM:\ \rho(p )\leq
R\}.
\end{align}
Then $\mM_R$ is a smooth manifold with boundary $\partial \mM_R$. 

 Let $f:[0,1]\rightarrow [0,1]$ be a smooth function such that  $f(t)= 0$ for $0\leq t\leq \frac{1}{4}$, while $f(t) =1$ for $   \frac{1}{2}\leq t\leq 1$.  Let $h:[0,1]\rightarrow [0,1]$ be a smooth function such that $h(t)=1$ for $0\leq t\leq \frac{3}{4}$, while $h(t)=0$ for $\frac{7}{8}\leq t\leq 1$. 

Inspired by \cite{BL91} and \cite{Co86}, we make the following deformation of $ D_{\mF\oplus\mF_1^\perp,\beta,\varepsilon}$  on $\mM_R$, which will play a key role in what follows,
\begin{align}\label{2.151x}
 D_{\mF\oplus\mF_1^\perp,\beta,\varepsilon} + \frac{f\left(\frac{\rho}{R}\right)\widehat c(\sigma)}{\beta}.
\end{align}

\begin{rem}\label{t4.0}
The usual deformation from the analytic localization  point of view (such as in \cite{BL91})  deforms $D_{\mF\oplus\mF_1^\perp,\beta,\varepsilon}$ by $ T{\widehat c(\rho\sigma)} $, with $T>0$ being independent of $\beta$ and $\varepsilon$. On the other hand,  $fc(\sigma)$  has  
occured  in \cite{Co86}, where it is viewed as the symbol of a fiberwise Dirac operator. Here we use  ${ f{\widehat c(\sigma)} }/{\beta}$ to deform  $D_{\mF\oplus\mF_1^\perp,\beta,\varepsilon}$, while  Lemma \ref{t2.1} allows us to get the needed estimates  given in the following  Lemma.
\end{rem}

\begin{lemma}\label{t4.1}
There exists  $R_0>0$ such that for any (fixed) $R\geq R_0$, when $\beta,\ \varepsilon>0$  (which may depend on $R$) are  small enough,

\noindent  (i) for any   $s\in\Gamma(S_{\beta,\varepsilon} (\mF\oplus\mF_1^\perp)\widehat\otimes\Lambda^* (\mF_2^\perp ))$ supported in $\mM_{{R}{}}$, one has\footnote{The norms below  denpend on $\beta$ and $\varepsilon$. In case of no confusion, we omit the subscripts for simplicity.}
\begin{align}\label{4.1}
\left\|\left(D_{\mF\oplus\mF_1^\perp,\beta,\varepsilon} + \frac{f\left(\frac{\rho}{R}\right)\widehat c(\sigma)}{\beta}\right)s\right\|\geq  \frac{\sqrt{ {\delta} }}{4\beta}\,\|s\|;
\end{align}
(ii) for any  $s\in\Gamma(S_{\beta,\varepsilon} (\mF\oplus\mF_1^\perp)\widehat\otimes\Lambda^* (\mF_2^\perp ))$ supported in $\mM_{{R}{}}\setminus \mM_{\frac{R}{2}}$, one has 
\begin{align}\label{4.2}
\left\| \left(  h\left(\frac{\rho}{R}\right)  D_{\mF\oplus\mF_1^\perp,\beta,\varepsilon}  h\left(\frac{\rho}{R}\right)  +
 \frac{  \widehat c(\sigma)}{\beta}\right)   s \right\|\geq  \frac{ { {1} }}{2\beta}\,\|s\|.
\end{align}
\end{lemma}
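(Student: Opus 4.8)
The plan is to prove both (i) and (ii) by squaring the deformed operator and using the Lichnerowicz-type formula together with the estimates from Lemma \ref{t2.1}. The point of both inequalities is that the deformation term produces, after squaring, a term of size $\delta/\beta^2$ coming from $k^F/(4\beta^2)$ together with a contribution from $\widehat c(\sigma)^2$, while all the cross terms and curvature terms are of lower order in $\beta$ once $R$ is large and $\beta,\varepsilon$ are small. First I would write, for $s$ supported in $\mM_R$,
\begin{align}\label{plan1}
\left\|\left(D_{\mF\oplus\mF_1^\perp,\beta,\varepsilon} + \tfrac{f(\rho/R)\widehat c(\sigma)}{\beta}\right)s\right\|^2
= \left\langle \left(D_{\mF\oplus\mF_1^\perp,\beta,\varepsilon} + \tfrac{f(\rho/R)\widehat c(\sigma)}{\beta}\right)^2 s, s\right\rangle,
\end{align}
and expand the square as $D_{\mF\oplus\mF_1^\perp,\beta,\varepsilon}^2 + \tfrac{1}{\beta}[D_{\mF\oplus\mF_1^\perp,\beta,\varepsilon}, f(\rho/R)\widehat c(\sigma)]_+ + \tfrac{f(\rho/R)^2}{\beta^2}\widehat c(\sigma)^2$, where $[\,,\,]_+$ is the anticommutator. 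Note $\widehat c(\sigma)^2 = |\sigma|^2 = 1$, so the last term is $f(\rho/R)^2/\beta^2$, which is nonnegative and equals $1/\beta^2$ on $\mM_R\setminus\mM_{R/2}$.

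**The main estimates.** For $D_{\mF\oplus\mF_1^\perp,\beta,\varepsilon}^2$ I would invoke the Lichnerowicz formula in the form (\ref{1.50}) (applied with $F$ replaced by $\mF\oplus\mF_1^\perp$, so that the leafwise scalar curvature appearing is $k^F\geq\delta$; note the relevant ``$\varepsilon$''-direction of \S\ref{s1.5} is here the $\mF_1^\perp$-direction), giving
\begin{align}\label{plan2}
D_{\mF\oplus\mF_1^\perp,\beta,\varepsilon}^2 = -\Delta^{\beta,\varepsilon} + \frac{k^F}{4\beta^2} + O\left(\frac{1}{\beta} + \frac{\varepsilon^2}{\beta^2}\right),
\end{align}
the $-\Delta^{\beta,\varepsilon}$ being a nonnegative operator. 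The anticommutator term is the heart of the matter: using (\ref{2.36a}) one has $[D_{\mF\oplus\mF_1^\perp,\beta,\varepsilon}, \widehat c(\sigma)]_+$ expressed through $\widehat c(\nabla^{\mF_2^\perp}\sigma)$ and Clifford products of the $f_i,h_s,e_j$ directions, plus a term $c(df(\rho/R)/R)\widehat c(\sigma)$ from differentiating the cutoff. The $\mF$-directional derivatives carry a factor $\beta^{-1}$ in (\ref{2.36}); combined with the extra $\beta^{-1}$ from the deformation coefficient this would a priori give $\beta^{-2}$, but Lemma \ref{t2.1} — specifically (\ref{2.2}) and (\ref{2.2a}) — bounds $|\nabla^{\mF_2^\perp}_X\sigma|$ by $A_1/\rho \leq 4A_1/R$ on the support of $f(\rho/R)$ (where $\rho\geq R/4$), so that term is $O(1/(\beta^2 R))$. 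The $\mF_1^\perp$ and $\mF_2^\perp$ directional pieces carry coefficients $\varepsilon$ and $1$ respectively, hence are $O(\varepsilon/\beta)$ and $O(1/\beta)$. The cutoff term $c(df(\rho/R)/R)$ is $O(1/(\beta R))$ by (\ref{2.1}). Altogether
\begin{align}\label{plan3}
\left(D_{\mF\oplus\mF_1^\perp,\beta,\varepsilon} + \tfrac{f(\rho/R)\widehat c(\sigma)}{\beta}\right)^2 \geq \frac{k^F f(\rho/R)^2}{4\beta^2} + \frac{f(\rho/R)^2}{\beta^2} + O\left(\frac{1}{\beta^2 R} + \frac{1}{\beta} + \frac{\varepsilon^2}{\beta^2}\right)
\end{align}
as operators, modulo the nonnegative $-\Delta^{\beta,\varepsilon}$. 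Wait — I must be careful that on the region $\rho/R\leq 1/4$ where $f=0$ the only surviving positive term is $k^F/(4\beta^2)\geq \delta/(4\beta^2)$, while where $f=1$ one has $k^F/(4\beta^2) + 1/\beta^2 > \delta/(4\beta^2)$; on the transition region $f\in[0,1]$ one interpolates and the bound $\delta/(4\beta^2)$ still dominates since $k^F\geq\delta$. Hence on all of $\mM_R$ the scalar part is $\geq \delta/(4\beta^2) - O(1/(\beta^2 R) + 1/\beta + \varepsilon^2/\beta^2)$; choosing $R\geq R_0$ with $R_0$ large enough to absorb the $1/(\beta^2 R)$ term into, say, $\delta/(16\beta^2)$, and then $\beta,\varepsilon$ small enough (depending on $R$) to absorb $O(1/\beta + \varepsilon^2/\beta^2)$, we get $\geq \delta/(16\beta^2)$, which gives (\ref{4.1}) after taking square roots (note $\sqrt{\delta/16} = \sqrt\delta/4$). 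For (ii), on $\mM_R\setminus\mM_{R/2}$ the cutoff $h(\rho/R)\equiv 1$ there, so $h(\rho/R)D_{\mF\oplus\mF_1^\perp,\beta,\varepsilon}h(\rho/R) + \widehat c(\sigma)/\beta$ acts on such $s$ exactly as $D_{\mF\oplus\mF_1^\perp,\beta,\varepsilon} + \widehat c(\sigma)/\beta$, and the same squaring argument — now with $f$ replaced by the constant $1$, so that $\widehat c(\sigma)^2 = 1$ contributes $1/\beta^2$ and $\nabla^{\mF_2^\perp}\sigma$ is bounded by $2A_1/R$ on $\mM_R\setminus\mM_{R/2}$ — yields a lower bound $\geq (k^F/4 + 1)/\beta^2 - (\text{small}) \geq 1/(4\beta^2)$ for $R$ large and $\beta,\varepsilon$ small, which is (\ref{4.2}) (here $\sqrt{1/4} = 1/2$).

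**Main obstacle.** The delicate point is bookkeeping the $\beta$-powers in the anticommutator term and confirming that the only genuinely $\beta^{-2}$ contributions are the manifestly nonnegative ones, namely $k^F f(\rho/R)^2/(4\beta^2)$, $f(\rho/R)^2/\beta^2$, and the part of $D^2$ giving $k^F/(4\beta^2)$ — every other $\beta^{-2}$-looking term must be killed either by a factor $1/R$ (via Lemma \ref{t2.1}) or by a factor $\varepsilon^2$. In particular one must check that the ``mixed'' Clifford terms $c_{\beta,\varepsilon}(\beta^{-1}f_i)\widehat c(\nabla^{\mF_2^\perp}_{f_i}\sigma)/\beta$, which naively scale like $\beta^{-2}$, are saved by $|\nabla^{\mF_2^\perp}_{f_i}\sigma| = O(1/\rho) = O(1/R)$ on $\supp f(\rho/R)$; this is precisely what (\ref{2.2}) was designed for, and is the reason the deformation is taken with coefficient $1/\beta$ rather than a $\beta$-independent constant $T$ (see Remark \ref{t4.0}). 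A secondary technical care is that, strictly, to pass from the pointwise operator inequality to the $L^2$-inequality one integrates against $s$ and uses $\langle -\Delta^{\beta,\varepsilon}s,s\rangle \geq 0$; since $s$ (resp. $h(\rho/R)s$) has compact support in the interior of $\mM_R$ (resp. in $\mM_R\setminus\mM_{R/4}$), there are no boundary terms, so this step is clean. Once the $\beta$-power accounting is organized, both inequalities follow by the threshold choices of $R_0$ and then of $\beta,\varepsilon$ as described above.
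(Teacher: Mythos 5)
Your proof of part (i) follows essentially the same route as the paper: expand the square, use the Lichnerowicz formula \eqref{1.50} to extract $k^\mF/(4\beta^2)$ with $k^\mF\geq\delta$, and control the cross terms by Lemma~\ref{t2.1} so that the a priori $\beta^{-2}$ contribution from $\sum_i \beta^{-1}c_{\beta,\varepsilon}(\beta^{-1}f_i)\widehat c(\nabla^{\mF_2^\perp}_{f_i}\sigma)$ is tamed by $|\nabla^{\mF_2^\perp}_{f_i}\sigma|\leq A_1/\rho=O(1/R)$ on the support of $f(\rho/R)$; this is exactly the computation \eqref{2.37}--\eqref{2.39} in the paper, including the two-stage choice of first $R$ large then $\beta,\varepsilon$ small.

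There is, however, a genuine error in your treatment of part (ii). You assert that $h(\rho/R)\equiv 1$ on $\mM_R\setminus\mM_{R/2}$ and hence that $h(\rho/R)\,D_{\mF\oplus\mF_1^\perp,\beta,\varepsilon}\,h(\rho/R)+\widehat c(\sigma)/\beta$ ``acts on such $s$ exactly as $D_{\mF\oplus\mF_1^\perp,\beta,\varepsilon}+\widehat c(\sigma)/\beta$.'' This is false: by definition $h(t)=1$ only for $t\leq\tfrac34$ and $h(t)=0$ for $t\geq\tfrac78$, whereas $\mM_R\setminus\mM_{R/2}$ corresponds to $\rho/R\in(\tfrac12,1]$, on which $h$ runs through all of $[0,1]$ and in particular vanishes near $\partial\mM_R$. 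Consequently the ``$k^F/(4\beta^2)$'' contribution to the square that you invoke is simply not present --- there is no Lichnerowicz lower bound for $(hD_{\mF\oplus\mF_1^\perp,\beta,\varepsilon}h)^2$ in the region where $h$ degenerates. What the paper actually does (cf.\ \eqref{2.44}) is to observe that, since $h$ commutes with the zeroth-order endomorphism $\widehat c(\sigma)$ and with the anticommutator $[D_{\mF\oplus\mF_1^\perp,\beta,\varepsilon},\widehat c(\sigma)]$,
\begin{align*}
\left(h D_{\mF\oplus\mF_1^\perp,\beta,\varepsilon}h+\frac{\widehat c(\sigma)}{\beta}\right)^2
=\left(h D_{\mF\oplus\mF_1^\perp,\beta,\varepsilon}h\right)^2+\frac{h^2}{\beta}\left[D_{\mF\oplus\mF_1^\perp,\beta,\varepsilon},\widehat c(\sigma)\right]+\frac{1}{\beta^2},
\end{align*}
with no $dh$ terms at all; one then discards the manifestly nonnegative $(hD_{\mF\oplus\mF_1^\perp,\beta,\varepsilon}h)^2$ and relies only on the $1/\beta^2$ term as the dominant positive contribution, the anticommutator being $O(1/(\beta^2R)+1/\beta)$ on $\mM_R\setminus\mM_{R/2}$ (where $\rho\geq R/2$) by Lemma~\ref{t2.1}. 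So your final estimate $\geq 1/(4\beta^2)$ is arithmetically the right target, but the argument you give to reach it rests on a false premise about $h$ and must be replaced by the paper's decomposition in which only $1/\beta^2$, not $k^F/(4\beta^2)$, supplies the positivity.
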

\begin{proof}

In view of Remark \ref{t1.4} and (\ref{2.36}), one has
\begin{multline}\label{2.37} 
 \left(   D_{\mF\oplus\mF_1^\perp,\beta,\varepsilon} 
  +
 \frac{ f\left(\frac{\rho}{R}\right) \widehat c(\sigma)}{\beta}\right)    ^2
=D_{\mF\oplus\mF_1^\perp,\beta,\varepsilon} ^2+
\frac{f'\left(\frac{\rho}{R}\right)}{\beta R} c_{\beta,\varepsilon}(d\rho) \widehat c(\sigma)
\\
+ \frac{  f\left(\frac{\rho}{R} \right)  }{ \beta } \left[D_{\mF\oplus\mF_1^\perp,\beta,\varepsilon} , \widehat c(\sigma) \right]
+\frac{  f\left(\frac{\rho}{R}\right) ^2}{\beta^2}
 ,
\end{multline}
where we identify $d\rho$ with the gradient of $\rho$. 

By definition, one has on $\mM\setminus s(M)$ that
\begin{align}\label{2.38a} 
 c_{\beta,\varepsilon }(d\rho)= \sum_{i=1}^q \beta^{-1}c_{\beta,\varepsilon}\left(\beta^{-1}f_i\right) {f_i}(\rho) + \sum_{s=1}^{q_1} \varepsilon\, c_{\beta,\varepsilon}\left(\varepsilon h_s\right) {h_s} (\rho)
 +\sum_{j=1}^{q_2}  c \left(e_j\right) {e_j} (\rho) .
\end{align}

By  (\ref{2.36})  and (\ref{2.36a}),  one has on $\mM\setminus s(M)$ that
\begin{multline}\label{2.38b} 
 \left[D_{\mF\oplus\mF_1^\perp,\beta,\varepsilon} , \widehat c(\sigma) \right] 
=\sum_{i=1}^q \beta^{-1}c_{\beta,\varepsilon}\left(\beta^{-1}f_i\right)\widehat c\left(\nabla^{\mF_2^\perp}_{f_i}\sigma\right) + \sum_{s=1}^{q_1} \varepsilon\, c_{\beta,\varepsilon}\left(\varepsilon h_s\right) \widehat c\left(\nabla^{\mF_2^\perp}_{h_s}\sigma\right) 
\\
 +\sum_{j=1}^{q_2}  c \left(e_j\right) \widehat c\left(\nabla^{\mF_2^\perp}_{e_j}\sigma\right) .
\end{multline}

By Lemma \ref{t2.1}, (\ref{2.38a})  and (\ref{2.38b}), we find that there exists a constant $C>0$, not depending   on 
$R,\, \beta,\, \varepsilon>0$, such that the following inequality holds on $\mM_R\setminus s(M)$,
\begin{align}\label{2.38} 
\frac{\left|c_{\beta,\varepsilon }(d\rho)\right|}{R}   + f\left(\frac{\rho}{R} \right)\left|
 \left[D_{\mF\oplus\mF_1^\perp,\beta,\varepsilon} , \widehat c(\sigma) \right] \right|\leq \frac{C}{\beta R} +O_R\left(1\right),
\end{align}
where by $O_R(\cdot)$ we mean that the estimating constant might depend on $R>0$.

On the other hand,  by (\ref{1.50}), the following formula holds on $\mM_R$, 
\begin{align}\label{2.39} 
 D_{\mF\oplus\mF_1^\perp,\beta,\varepsilon}^2=-\Delta^{\beta,\varepsilon}+\frac{k^\mF}{4\beta^2}+O_R\left(\frac{1}{\beta}+\frac{\varepsilon^2}{\beta^2}\right)
,
\end{align}
where $-\Delta^{\beta,\varepsilon}\geq 0$ is the corresponding Bochner Laplacian, and $k^\mF=\pi^*k^F\geq \delta$.

From   (\ref{2.37}), (\ref{2.38}) and (\ref{2.39}), one sees that if one first fixes a sufficiently large $R>0$ and then makes  $\beta>0$, $\varepsilon>0$ sufficiently small,  one deduces (\ref{4.1}) easily.

Now  by  (\ref{2.36}) one has  on $\mM_R\setminus s(M)$ that
 \begin{multline}\label{2.44} 
 \left(  h\left(\frac{\rho}{R}\right)  D_{\mF\oplus\mF_1^\perp,\beta,\varepsilon}  h\left(\frac{\rho}{R}\right) 
  +
 \frac{ \widehat c(\sigma)}{\beta}\right)    ^2
=\left( h\left(\frac{\rho}{R}\right)  D_{\mF\oplus\mF_1^\perp,\beta,\varepsilon}  h\left(\frac{\rho}{R}\right) \right)^2
\\
+ \frac{  h\left(\frac{\rho}{R}\right) ^2}{\beta} \left[ D_{\mF\oplus\mF_1^\perp,\beta,\varepsilon}, \widehat c(\sigma)\right]
+\frac{ 1}{\beta^2}
 . 
\end{multline}

From    (\ref{2.38}) and (\ref{2.44}),   one gets (\ref{4.2}), where ${\rm Supp}(s)\subseteq \mM_R\setminus\mM_{\frac{R}{2}}$, similarly.  
\end{proof}

Lemma \ref{t4.1}  motivates the proof of Theorem \ref{t0.2} (for the case of $\dim M=4k$) given in  Section \ref{s2.4}, where we  make use of a trick of Braverman \cite[\S 14]{Brav02} (See also \cite[\S 3]{MZ12}).  This approach reflects the topological nature of the $\widehat A$-genus and the involved indices.

\subsection{Proof of Theorem \ref{t0.2} for the case of $\dim M=4k$}\label{s2.4}

Let $\partial \mM_R$ bound another oriented  manifold $\mN _R$ so that $\widetilde \mN_R=\mM_R\cup\mN_R$ is a closed manifold (for example, one can take the double of $\mM_R$). 
Let $E$ be a Hermitian vector bundle over $\mM_R$ such that $(S_{\beta,\varepsilon}(\mF\oplus\mF_1^\perp)\widehat\otimes\Lambda^*(\mF_2^\perp))_-\oplus E$
is a trivial vector bundle over $\mM_R$. Then $(S_{\beta,\varepsilon}(\mF\oplus\mF_1^\perp)\widehat\otimes\Lambda^*(\mF_2^\perp))_+\oplus E$ is a trivial vector bundle near $\partial\mM_R$, under the identification $\widehat c(\sigma)+{\rm Id}_E$. 

By extending obviously the above trivial vector bundles to $\mN_R$, we get a ${\bf Z}_2$-graded Hermitian vector bundle $\xi=\xi_+\oplus\xi_-$ over $\widetilde\mN_R$ and an odd self-adjoint endomorphism $V=v+v^*\in\Gamma({\rm End}(\xi))$ (with $v:\Gamma(\xi_+)\rightarrow\Gamma(\xi_-)$, $v^*$ being the adjoint of $v$) such that 
\begin{align}\label{2.19a}
\xi_\pm  = \left (S_{\beta,\varepsilon}\left(\mF\oplus\mF_1^\perp\right)\widehat\otimes\Lambda^*\left(\mF_2^\perp\right)\right)_\pm\oplus E 
\end{align}
over $\mM_R$, $V$ is invertible on $\mN_R$ and 
\begin{align}\label{2.19b}
V=f\left(\frac{\rho}{R}\right)\widehat c(\sigma)+{\rm Id}_E 
\end{align}
on $\mM_R$, which is invertible on $\mM_R\setminus \mM_{\frac{R}{2}}$. 

 Recall  that  $h(\frac{\rho}{R})$   vanishes near $\partial \mM_R$. We extend it to a function on $\widetilde\mN_R$ which equals to zero on $\mN_R$, and denote the resulting function on $\widetilde\mN_R$ by $\widetilde h_R$.  
Let $\pi_{\widetilde\mN_R}:T\widetilde\mN_R\rightarrow \widetilde\mN_R$ be the projection of the tangent bundle of $\widetilde\mN_R$. 
Let $\gamma  ^{\widetilde\mN_R}\in{\rm Hom}(\pi_{\widetilde\mN_R}^*\xi_+,\pi_{\widetilde\mN_R}^*\xi_-)$ be the symbol defined by
\begin{align}\label{2.19c}
\gamma^{\widetilde\mN_R}(p,w)= \pi_{\widetilde\mN_R}^*\left(\sqrt{-1}\,\widetilde h^2_R\,c _{\beta,\varepsilon} (w)+v(p)\right),\ \ {\rm for}
\ \ p\in\widetilde\mN_R,\ \ w\in T_p\widetilde\mN_R. 
\end{align}
By (\ref{2.19b}) and (\ref{2.19c}), $\gamma  ^{\widetilde\mN_R}$ is singular only if $w=0$ and $p\in\mM_{\frac{R}{2}}$. Thus $\gamma  ^{\widetilde\mN_R}$ is an  elliptic symbol.

On the other hand,  it is clear that $\widetilde h_R D_{\mF\oplus\mF_1^\perp,\beta,\varepsilon} \widetilde h_R$ is well-defined on $\widetilde \mN_R$ if we define it to equal to zero on $\widetilde \mN_R\setminus\mM_R$.  

Let $A:L^2(\xi)\rightarrow L^2(\xi)$ be a second order positive elliptic differential operator on $\widetilde\mN_R$  preserving the ${\bf Z}_2$-grading of $\xi=\xi_+\oplus\xi_-$, such that its symbol equals to $|\eta|^2$ at $\eta\in T\widetilde\mN_R$ (to be more precise, here   $A$ also depends on the defining metric. We omit the corresponding subscript/superscript only for convenience). 
Let $P_{R,\beta,\varepsilon}:L^2(\xi)\rightarrow L^2(\xi)$ be the zeroth order   pseudodifferential operator on $\widetilde \mN_R$ defined by
\begin{align}\label{2.22}
P_{R,\beta,\varepsilon}=A^{-\frac{1}{4}} \widetilde h_R D_{\mF\oplus\mF_1^\perp,\beta,\varepsilon} \widetilde h_RA^{-\frac{1}{4} }
+\frac{V}{\beta}. 
\end{align}
Let $P_{R,\beta,\varepsilon,+}:L^2(\xi_+)\rightarrow L^2(\xi_-)$ be the obvious restriction. Then   the principal symbol of $P_{R,\beta,\varepsilon,+}$, which we denote by $\gamma(P_{R,\beta,\varepsilon,+})$, is  homotopic through elliptic symbols to 
$\gamma ^{\widetilde\mN_R}$. 
Thus $P_{R,\beta,\varepsilon,+}$ is a Fredholm operator. 
Moreover, by the Atiyah-Singer index theorem \cite{ASI} (cf. \cite[Theorem 13.8 of Chap. III]{LaMi89}),  one finds
\begin{align}\label{2.23}
{\rm ind}\left(P_{R,\beta,\varepsilon,+}\right) =  
\widehat A(M).
\end{align}

Inspired by \cite[\S 14]{Brav02} (See also \cite[\S 3]{MZ12}), for any $0\leq t\leq 1$, set 
\begin{align}\label{2.24}
P_{R,\beta,\varepsilon,+}(t) =A^{-\frac{1}{4}} \widetilde h_R D_{\mF\oplus\mF_1^\perp,\beta,\varepsilon} \widetilde h_RA^{-\frac{1}{4} }
+\frac{tv}{\beta} +A^{-\frac{1}{4}} \frac{(1-t)v}{\beta} A^{-\frac{1}{4}}  . 
\end{align}
Then   $ P_{R,\beta,\varepsilon,+}(t) $ is a smooth family of zeroth order  pseudodifferential operators such that the corresponding symbol $\gamma( P_{R,\beta,\varepsilon,+}(t) )$ is elliptic 
 for $0<t\leq 1$. 
Thus $P_{R,\beta,\varepsilon,+}(t) $ is a continuous family of Fredholm operators for $0<t\leq 1$ with $P_{R,\beta,\varepsilon,+}(1)=P_{R,\beta,\varepsilon,+} $. 

Now since $P_{R,\beta,\varepsilon,+}(t) $ is continuous on the whole $[0,1]$, in view of  (\ref{2.23}), if $P_{R,\beta,\varepsilon,+}(0) $ is Fredholm and has vanishing index, then  Theorem \ref{t0.2} follows from (\ref{2.23}). 

Thus we need only to prove the following result. 

\begin{prop}\label{t2.3}  
There exist $R,\ \beta,\ \varepsilon>0$ such that the following identity holds,
\begin{align}\label{2.25}
\dim\left(\ker\left(P_{R,\beta,\varepsilon,+}(0)  \right)\right) =\dim\left(\ker\left(P_{R,\beta,\varepsilon,+}(0) ^* \right)\right) =0. 
\end{align}
\end{prop}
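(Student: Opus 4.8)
The plan is to show that for suitable $R,\ \beta,\ \varepsilon>0$ the operator $P_{R,\beta,\varepsilon,+}(0)$ and its adjoint have trivial kernels. Recall that
\begin{align}\label{propplan1}
P_{R,\beta,\varepsilon,+}(0)=A^{-\frac14}\left(\widetilde h_R D_{\mF\oplus\mF_1^\perp,\beta,\varepsilon}\widetilde h_R+\frac{v}{\beta}\right)A^{-\frac14},
\end{align}
so, since $A^{-\frac14}$ is an invertible self-adjoint operator, the kernels of $P_{R,\beta,\varepsilon,+}(0)$ and $P_{R,\beta,\varepsilon,+}(0)^*$ are in bijection with the kernels of the restrictions of $Q_{R,\beta,\varepsilon}:=\widetilde h_R D_{\mF\oplus\mF_1^\perp,\beta,\varepsilon}\widetilde h_R+\frac{V}{\beta}$ to $L^2(\xi_+)$ and $L^2(\xi_-)$ respectively. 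Thus it suffices to prove that $Q_{R,\beta,\varepsilon}s=0$ forces $s=0$ for sections over the closed manifold $\widetilde\mN_R$. First I would split $s=s_1+s_2$ using the covering $\widetilde\mN_R=\mM_{R/2}\cup(\widetilde\mN_R\setminus\mM_{R/4})$: choose a partition of unity and note that on the region $\widetilde\mN_R\setminus\mM_R$ the operator $\widetilde h_R D\widetilde h_R$ vanishes and $V$ is invertible (by construction $V$ is invertible on $\mN_R$ and, on $\mM_R\setminus\mM_{R/2}$, equals $f(\rho/R)\widehat c(\sigma)+\mathrm{Id}_E$ which is invertible there).

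The key step is a standard localization / Agmon-type estimate. For a section $s$ supported in $\mM_{R/2}$ we have $\widetilde h_R=h(\rho/R)$ on its support and $V=\widehat c(\sigma)/\beta$-piece dominates, so Lemma \ref{t4.1}(ii) gives, after passing through the $\widetilde h_R$-truncation, a lower bound of the form $\|Q_{R,\beta,\varepsilon}s\|\geq \frac{c}{\beta}\|s\|$; for a section supported in $\mM_R\setminus\mM_{R/2}$ (or further out, where $V$ itself is invertible with norm bounded below) the invertibility of $V/\beta$ together with the boundedness of the commutator terms coming from cutting off gives a similar bound. I would glue these two estimates with an IMS-type localization formula: writing $1=\varphi_1^2+\varphi_2^2$ with $\varphi_1$ supported in $\mM_{R/2}$ and $\varphi_2$ supported away from $\mM_{R/4}$, one has
\begin{align}\label{propplan2}
\|Q_{R,\beta,\varepsilon}s\|^2\geq \sum_{j}\|\varphi_j Q_{R,\beta,\varepsilon}s\|^2 - C\|s\|^2
\geq \sum_j\|Q_{R,\beta,\varepsilon}(\varphi_j s)\|^2 - C'\left(\beta+\frac{1}{R}\right)\|s\|^2,
\end{align}
where the error terms involve $[Q_{R,\beta,\varepsilon},\varphi_j]$, hence the gradient of $\rho$ and the estimates of Lemma \ref{t2.1}; applying the two pieces of Lemma \ref{t4.1} to the summands and using $k^F\geq\delta$ yields $\|Q_{R,\beta,\varepsilon}s\|^2\geq\bigl(\frac{c_0}{\beta^2}-C'(\beta+\frac1R)\bigr)\|s\|^2$.

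Finally I would choose the parameters in the order dictated by Lemma \ref{t4.1}: first fix $R\geq R_0$ large enough that the $1/R$ error is controlled, then take $\beta>0$ (and then $\varepsilon>0$, depending on $R$ and $\beta$) small enough that $\frac{c_0}{\beta^2}$ strictly dominates the error terms $C'(\beta+\frac1R)$ and all the $O_R(1)$ and $O_R(1/\beta+\varepsilon^2/\beta^2)$ remainders appearing in (\ref{2.37})--(\ref{2.39}). With these choices $\|Q_{R,\beta,\varepsilon}s\|\geq\frac{c_1}{\beta}\|s\|>0$ for all nonzero $s$, so $\ker Q_{R,\beta,\varepsilon}=0$; applying the same argument to the formally self-adjoint operator $Q_{R,\beta,\varepsilon}$ on all of $L^2(\xi)$ (it is formally self-adjoint since $\widetilde h_R D\widetilde h_R$ and $V$ are) gives the vanishing of the cokernel as well, which is (\ref{2.25}). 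The main obstacle is the bookkeeping in the localization estimate (\ref{propplan2}): one must check that every cross term and every commutator error is either $O_R(1)$ or $O(\beta+1/R)$ and therefore genuinely beaten by the $\beta^{-2}$ gain coming from the positive leafwise scalar curvature together with the invertibility of $V$, using crucially that the deformation term is $f(\rho/R)\widehat c(\sigma)/\beta$ rather than $T\widehat c(\rho\sigma)$, so that Lemma \ref{t2.1} supplies $\beta$-independent bounds on the relevant covariant derivatives of $\sigma$.
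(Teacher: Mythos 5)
Your proposal follows essentially the same route as the paper's proof: reduce via the invertibility of $A^{-1/4}$ and the formal self-adjointness of $Q_{R,\beta,\varepsilon}=\widetilde h_R D_{\mF\oplus\mF_1^\perp,\beta,\varepsilon}\widetilde h_R+V/\beta$ to showing $\ker Q_{R,\beta,\varepsilon}=0$, use the invertibility of $V$ off $\mM_R$ to localize, split by an IMS-type partition of unity into the two regimes of Lemma \ref{t4.1}, bound the commutator errors via Lemma \ref{t2.1}, and then choose $R$ large and $\beta,\varepsilon$ small. The only step you elide is the paper's explicit bundle decomposition $A^{-1/4}s=s_1+s_2$ with $s_2\in L^2(E)$ and the observation that the $E$-component dies immediately because $V=f(\rho/R)\widehat c(\sigma)+\mathrm{Id}_E$, but this is a small algebraic point and does not affect the correctness of your argument.
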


\begin{proof}
By definition, $P_{R,\beta,\varepsilon}(0):L^2(\xi)\rightarrow L^2(\xi)$ is given by 
\begin{align}\label{2.26}
P_{R,\beta,\varepsilon}(0) =A^{-\frac{1}{4}} \widetilde h_R D_{\mF\oplus\mF_1^\perp,\beta,\varepsilon} \widetilde h_RA^{-\frac{1}{4} }+
 A^{-\frac{1}{4}} \frac{V}{\beta} A^{-\frac{1}{4}}  . 
\end{align}

By (\ref{2.14}),  $P_{R,\beta,\varepsilon}(0)$ is  formally  self-adjoint. Thus we need to show that 
\begin{align}\label{2.27}
\dim\left(\ker\left(P_{R,\beta,\varepsilon}(0)  \right)\right)  =0
\end{align}
for certain $R,\ \beta,\ \varepsilon>0$. 
Let $s\in \ker(P_{R,\beta,\varepsilon}(0) )$. By (\ref{2.26}) one has 
\begin{align}\label{2.28}
\left( \widetilde h_R D_{\mF\oplus\mF_1^\perp,\beta,\varepsilon} \widetilde h_R +
   \frac{V}{\beta}\right) A^{-\frac{1}{4}}  s=0. 
\end{align}

Since $\widetilde h_R=0$  on $\widetilde \mN_R\setminus \mM_R$, while $V$ is invertible on $\widetilde \mN_R\setminus \mM_R$, one has by (\ref{2.28}) 
\begin{align}\label{2.29}
A^{-\frac{1}{4}}  s=0\ \ \ \ \ \ \ {\rm on}\ \ \ \ \ \widetilde \mN_R\setminus \mM_R. 
\end{align}

Write on $\mM_R$ that 
\begin{align}\label{2.30}
A^{-\frac{1}{4}}  s= s_1+s_2, 
\end{align}
with $s_1\in L^2 (S_{\beta,\varepsilon}(\mF\oplus\mF_1^\perp)\widehat\otimes\Lambda^* (\mF_2^\perp))$ and $s_2\in L^2(E)$. 

By (\ref{2.19b}), (\ref{2.28}) and (\ref{2.30}),  one has
\begin{align}\label{2.31}
s_2=0 ,
\end{align}
while
\begin{align}\label{2.32} 
\left( \widetilde h_R D_{\mF\oplus\mF_1^\perp,\beta,\varepsilon} \widetilde h_R +
 \frac{ f\left(\frac{\rho}{R}\right) \widehat c(\sigma)}{\beta}\right)   s_1=0. 
\end{align}

We need to show that (\ref{2.32}) implies $s_1=0$. 

Let $\alpha:[0,1]\rightarrow [0,1]$ be a smooth function such that $\alpha(t)=0$ for $0\leq t\leq \frac{1}{2}$, while $\alpha(t)=1$ for $\frac{2}{3}\leq t\leq 1$. 

Following \cite[pp. 115]{BL91}, 
let $\alpha_1$, $\alpha_2$ be the smooth functions on $\mM_R$ defined by
\begin{align}\label{2.33} 
\alpha_1=\frac{1- \alpha\left(\frac{\rho}{R}\right)}{\left(  \alpha\left(\frac{\rho}{R}\right)^2+\left(1- \alpha\left(\frac{\rho}{R}\right)\right)^2\right)^{\frac{1}{2}} } ,\ \ \ \alpha_2=\frac{ \alpha\left(\frac{\rho}{R}\right)}{\left(  \alpha\left(\frac{\rho}{R}\right)^2+\left(1- \alpha\left(\frac{\rho}{R}\right)\right)^2\right)^{\frac{1}{2}} } .
\end{align}
Then $\alpha_1^2+\alpha_2^2=1$   on $\mM_R$. Clearly,
  $\alpha_1\widetilde{h}_R=\alpha_1$, $\alpha_2f(\frac{\rho}{R})=\alpha_2$.  Thus, one has
\begin{multline}\label{2.34} 
\left\|\left( \widetilde h_R D_{\mF\oplus\mF_1^\perp,\beta,\varepsilon} \widetilde h_R +
 \frac{ f\left(\frac{\rho}{R}\right) \widehat c(\sigma)}{\beta}\right)   s_1\right\|^2 
= \left\|\alpha_1\left(   D_{\mF\oplus\mF_1^\perp,\beta,\varepsilon} 
  +
 \frac{ f\left(\frac{\rho}{R}\right) \widehat c(\sigma)}{\beta}\right)   s_1\right\|^2 
\\
+\left\|\alpha_2\left( \widetilde h_R D_{\mF\oplus\mF_1^\perp,\beta,\varepsilon} \widetilde h_R +
 \frac{  \widehat c(\sigma)}{\beta}\right)   s_1\right\|^2  ,
\end{multline}
from which  one gets
\begin{multline}\label{2.35} 
\sqrt{2}\, \left\|\left( \widetilde h_R D_{\mF\oplus\mF_1^\perp,\beta,\varepsilon} \widetilde h_R +
 \frac{ f\left(\frac{\rho}{R}\right) \widehat c(\sigma)}{\beta}\right)   s_1\right\|
\geq  \left\|\alpha_1\left(   D_{\mF\oplus\mF_1^\perp,\beta,\varepsilon} 
  +
 \frac{ f\left(\frac{\rho}{R}\right) \widehat c(\sigma)}{\beta}\right)   s_1\right\| 
\\
+\left\|\alpha_2\left( \widetilde h_R D_{\mF\oplus\mF_1^\perp,\beta,\varepsilon} \widetilde h_R +
 \frac{  \widehat c(\sigma)}{\beta}\right)   s_1\right\| 
\geq
 \left\|\left(   D_{\mF\oplus\mF_1^\perp,\beta,\varepsilon} 
  +
 \frac{ f\left(\frac{\rho}{R}\right) \widehat c(\sigma)}{\beta}\right)  \left(\alpha_1 s_1\right)\right\| 
\\
+\left\|\left( \widetilde h_R D_{\mF\oplus\mF_1^\perp,\beta,\varepsilon} \widetilde h_R +
 \frac{  \widehat c(\sigma)}{\beta}\right)  \left(\alpha_2 s_1\right)\right\| 
-\left\|  c_{\beta,\varepsilon}\left( d\alpha_1\right)s_1 \right\| 
-\left\|  c_{\beta,\varepsilon}\left( d\alpha_2\right)s_1 \right\| 
 ,
\end{multline}
where for each $i\in\{1,\,2\}$, we identify $d\alpha_i$   with the gradient of $\alpha_i$.

By Lemma \ref{t2.1},   (\ref{2.38a})  and  (\ref{2.33}),  there is   $C_1>0$,   not depending    on 
$R,\, \beta,\, \varepsilon>0$, such that
\begin{align}\label{2.41} 
\left|  c_{\beta,\varepsilon}\left( d\alpha_1\right) \right| +\left|  c_{\beta,\varepsilon}\left( d\alpha_2\right) \right| \leq \frac{C_1}{\beta R}+O_R(1) .
\end{align}

From  Lemma \ref{t4.1},    (\ref{2.35}) and (\ref{2.41}), 
one finds that there exist $   R,\ \beta,\ \varepsilon>0$   
such that
\begin{align}\label{2.47} 
\left\|\left( \widetilde h_R D_{\mF\oplus\mF_1^\perp,\beta,\varepsilon} \widetilde h_R +
 \frac{ f\left(\frac{\rho}{R}\right) \widehat c(\sigma)}{\beta}\right)   s_1\right\|
\geq  \frac{ \left\|s_1\right\|}{ \sqrt{ \beta} }  . 
\end{align}

From (\ref{2.28})-(\ref{2.32}),  (\ref{2.47}) and the invertibility of $A^{-\frac{1}{4}}$, one sees that for suitable $R,\ \beta,\ \varepsilon>0$, (\ref{2.27}) holds. This completes the proof of Proposition \ref{t2.3}, 
which implies   Theorem \ref{t0.2} for the case of $\dim M=4k$, when $\mF_2^\perp$ is orientable and of even rank.  
\end{proof}

If ${\rm rk}(\mF^\perp_2)$ is not even, we can consider $M\times M\times M\times M$  to make it even. If $ \mF_2^\perp$ is not orientable, then we can consider the double covering of $M$ with respect to $w_1(\mF_2^\perp)$, the first Stiefel-Whitney class of $\mF_2^\perp$, and consider the pull-back of $\mF_2^\perp$ on the double covering.
The proof of Theorem \ref{t0.2} for the case of $\dim M=4k$ is thus completed.

\begin{rem}\label{t44}
One may also  use $\frac{\rho}{R}$ instead of $f(\frac{\rho}{R} )$    in the above proof. 
\end{rem}

\subsection{The case of the mod $2$ index}\label{s2.5}

In this subsection, we consider the cases of $\dim M=8k+i$, $i=1,\,2$.  Here we deal with the case of $\dim M=8k+1$, where one considers {real}   operators as in \cite{ASV},  in detail.   
By multiplying $M$ by a Bott manifold of dimension $8$, which is a compact spin manifold $B^8$ such that $\widehat A(B^8)=1$,  we may well assume that $q_1>1$.  Then    
$\partial\mM_R$  
 is connected.

Let $f_1,\,\cdots,\,f_{q+q_1}$   be an oriented orthonormal basis of $(\mF\oplus \mF_1^\perp,{\beta^2g^\mF\oplus\frac{g^{\mF_1^\perp}}{\varepsilon^2}})$. 
Set
\begin{align}\label{2.47a}
\tau _{\beta,\varepsilon}= c_{\beta,\varepsilon}\left(f_1\right)\cdots c_{\beta,\varepsilon}\left(f_{q+q_1}\right).
\end{align}
Let $\widehat\tau$ be the ${\bf Z}_2$-grading operator for $
\Lambda^*(\mF_2^\perp)= \Lambda^{\rm even}(\mF_2^\perp)\oplus 
 \Lambda^{ {\rm odd}}(\mF_2^\perp)$.

  Inspired by  \cite[\S 3]{ASV} and \cite[(3.1)]{BZ93} (compare with \cite{Z93} which deals with  the case of $\dim M=8k+2$), we modify the sub-Dirac operator in  (\ref{2.21})   
by
\begin{align}\label{2.48}
 \widehat \tau\,\tau_{\beta,\varepsilon}  D_{\mF\oplus\mF_1^\perp,\beta,\varepsilon}
:\Gamma\left(S_{\beta,\varepsilon}(\mF\oplus\mF_1^\perp) \otimes\Lambda^*\left(\mF_2^\perp\right)\right)
\longrightarrow
\Gamma\left(S_{\beta,\varepsilon}(\mF\oplus\mF_1^\perp) \otimes\Lambda^*\left(\mF_2^\perp\right)\right),
\end{align}
which is formally skew-adjoint (here by dimension reason there is no ${\bf Z}_2$-grading of the real spinor bundle $S_{\beta,\varepsilon}(\mF\oplus\mF_1^\perp)$). 
We also   modify $V=v+v^*$ in (\ref{2.19b}) by 
\begin{align}\label{2.49}
\widehat V=\widehat v-\widehat v^*
\end{align}
such that one has, on $\mM_R$,  the following formula for $\widehat v$ acting between real vector bundles,
\begin{multline}\label{2.491}
\widehat  v=f\left(\frac{\rho}{R}\right) \widehat\tau\widehat c(\sigma)+{\rm Id}_E
: \Gamma\left(S_{\beta,\varepsilon}(\mF\oplus\mF_1^\perp) \otimes\Lambda^{\rm even}\left(\mF_2^\perp\right)\oplus E\right)
\\
\longrightarrow 
 \Gamma\left(S_{\beta,\varepsilon}(\mF\oplus\mF_1^\perp) \otimes\Lambda^{\rm odd}\left(\mF_2^\perp\right)\oplus E\right)
.
\end{multline}

We then modify the operator $P_{R,\beta,\varepsilon}$ in (\ref{2.22}) by 
\begin{align}\label{2.50}
\widehat P_{R,\beta,\varepsilon}= A^{-\frac{1}{4}} \widetilde h_R \tau_{\beta,\varepsilon}  \widehat \tau\, D_{\mF\oplus\mF_1^\perp,\beta,\varepsilon}
 \widetilde h_RA^{-\frac{1}{4} }
+\frac{\widehat V}{\beta},
\end{align}
which is clearly formally skew-adjoint. 
By direct computation, one has
\begin{align}\label{2.51}
\left(  \widehat\tau\widehat c(\sigma) \right)^* =\widehat c(\sigma) \widehat \tau  =- \widehat\tau\widehat c(\sigma) 
\end{align}
and that for any $X\in T\mM$, 
\begin{align}\label{2.52}
 \widehat \tau\,  \tau c(X)\widehat\tau\widehat c(\sigma) + \widehat\tau\widehat c(\sigma) \widehat\tau\,\tau c(X)=\tau c(X) \widehat c(\sigma)-   \widehat c(\sigma) \tau  c(X)=0.
\end{align}

From (\ref{2.50})-(\ref{2.52}), one sees that $(\widehat P_{R,\beta,\varepsilon})^2$ has an elliptic symbol. Thus $\widehat P_{R,\beta,\varepsilon}$ is a zeroth order real skew-adjoint  elliptic pseudodifferential operator, and  thus admits a mod 2 index in the sense of   \cite{ASV}. Moreover,   by the mod $2$ index theorem in \cite{ASV} (cf. \cite{LaMi89}), one has
\begin{align}\label{2.53a}
\alpha(M)= \dim\left(\ker\left(\widehat P_{R,\beta,\varepsilon}\right)\right)\  {\rm mod} \ 2.
\end{align}

Now by proceeding as in Section \ref{s2.4}, one sees that there are $R,\,\beta,\,\varepsilon>0$ such that
\begin{align}\label{2.53}
\dim\left(\ker\left(\widehat P_{R,\beta,\varepsilon}\right)\right)= 0.
\end{align}

From (\ref{2.53a}) and  (\ref{2.53}), one gets $\alpha (M)=0$.

\subsection{Proof of  the Connes vanishing theorem  and more}\label{s2.6}

Without loss of generality, we may and we will assume that all $\mF=\pi^*F$, $\mF^\perp_1$ and $\mF_2^\perp$ are oriented and of even rank. The main concern here is that we only assume $F$ is spin, not $TM$. Thus, here $\mF=\pi^*F$ is spin and carries a fixed spin structure. 

Instead of the sub-Dirac operator considered in (\ref{2.21}), 
we now consider the sub-Dirac operator   constructed as in (\ref{1.43}),
\begin{multline}\label{2.54}
D^{\mF,\phi(\mF_1^\perp)}_{\beta,\varepsilon}
:\Gamma\left(S(\mF)\widehat \otimes\Lambda^*\left(\mF_1^\perp\right) \widehat\otimes\Lambda^*
\left(\mF_2^\perp\right)\otimes\phi\left(\mF_1^\perp\right)\right)
\\
\longrightarrow 
\Gamma\left(S(\mF)\widehat \otimes\Lambda^*\left(\mF_1^\perp\right)\widehat\otimes\Lambda^*
\left(\mF_2^\perp\right)\otimes\phi\left(\mF_1^\perp\right)\right).
\end{multline}

Now we can proceed as in Sections \ref{s2.3} and \ref{s2.4}, by replacing the sub-Dirac operator in (\ref{2.21}) by $D^{\mF,\phi(\mF_1^\perp)}_{\beta,\varepsilon}$ above.

In particular, by the Atiyah-Singer index theorem \cite{ASI}, the right hand side of the formula corresponding to  (\ref{2.23}) is now
\begin{align}\label{2.55z}
 2^{\frac{q_1}{2}}
\left\langle\widehat A(F)\widehat L(TM/F){\rm ch}(\phi(TM/F)),[M]\right\rangle.
\end{align}

In summary,   if  $k^F$ is positive over $M$, then we get
\begin{align}\label{2.55}
\left\langle\widehat A(F)\widehat L(TM/F){\rm ch}(\phi(TM/F)),[M]\right\rangle=0.
\end{align}

Now as any rational Pontrjagin class of $TM/F$ can be expressed as a rational linear combination of classes of form $\widehat L(TM/F){\rm ch}(\phi(TM/F))$, one gets from (\ref{2.55}) that for any Pontrjagin class $p(TM/F)$ of $TM/F$, one has 
\begin{align}\label{2.55a}
  \left\langle\widehat A(F) p(TM/F) ,[M]\right\rangle=0,
\end{align}
which has   been proved in \cite[Corollary 8.3]{Co86}. 
In particular, one has
\begin{align}\label{2.56}
\widehat A(M)=  \left\langle\widehat A(TM)  ,[M]\right\rangle=\left\langle\widehat A(F)\widehat A(TM/F) ,[M]\right\rangle=0,
\end{align}
which completes
the proof of Theorem \ref{t0.1}.

\begin{rem} \label{t3.1} 
If one modifies the sub-Dirac operator in (\ref{2.21}) by twisting an integral power of $ \mF_1^\perp$, then one sees that (\ref{2.55a}) also holds under the condition of Theorem \ref{t0.2}. This generalizes \cite[Theorem 3.1]{LZ01}. 
\end{rem}

 By further modifying the sub-Dirac operators involved above, one   gets the following generalization of  Theorems \ref{t0.2} and \ref{t0.1} (compare with \cite[Theorem 3.2]{LZ01}).

\begin{thm} \label{t3.2} 
Under the assumptions of either Theorem \ref{t0.2} or \ref{t0.1}, if $TM/F$ is also oriented, then for any Pontrjagin class $p(TM/F)$ of $TM/F$, one has for any integer $k\geq 0$ that
\begin{align}\label{2.57}
  \left\langle\widehat A(F)p(TM/F) e(TM/F)^k ,[M]\right\rangle= 0.
\end{align}
In particular, 
\begin{align}\label{2.58}
  \left\langle\widehat A(F)e(TM/F)  ,[M]\right\rangle= 0.
\end{align}
\end{thm}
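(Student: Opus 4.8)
The plan is to reduce (\ref{2.57}) to its single genuinely new case $k=1$, and then to prove that case by running the localization argument of Sections \ref{s2.3}--\ref{s2.6} for a sub-Dirac operator that differs from the one used in Section \ref{s2.6} only in the choice of ${\bf Z}_2$-grading on its $\Lambda^*(\mF_1^\perp)$-factor. If ${\rm rk}(TM/F)$ is odd then $e(TM/F)$ is rationally trivial and (\ref{2.57}) is vacuous for $k\geq 1$, while for $k=0$ it is (\ref{2.55a}) (under the hypothesis of Theorem \ref{t0.1}) or the statement of Remark \ref{t3.1} (under the hypothesis of Theorem \ref{t0.2}); so assume ${\rm rk}(TM/F)=2m$ is even. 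Since $e(TM/F)^2$ equals the top Pontrjagin class $p_m(TM/F)$, for even $k=2j$ the class $e(TM/F)^{k}p(TM/F)=p_m(TM/F)^{j}p(TM/F)$ is a Pontrjagin class and (\ref{2.57}) again follows from (\ref{2.55a})/Remark \ref{t3.1}; for odd $k=2j+1$ one has $e(TM/F)^{k}p(TM/F)=e(TM/F)\bigl(p_m(TM/F)^{j}p(TM/F)\bigr)$. Hence Theorem \ref{t3.2} reduces to the assertion that $\langle\widehat A(F)\,e(TM/F)\,p(TM/F),[M]\rangle=0$ for every Pontrjagin class $p(TM/F)$, of which (\ref{2.58}) is the case $p=1$; and, exactly as in Section \ref{s2.6}, since the classes ${\rm ch}(\phi(TM/F))$ (equivalently $\widehat L(TM/F){\rm ch}(\phi(TM/F))$) span the rational Pontrjagin ring of $TM/F$, it is enough to prove
\[
\bigl\langle\widehat A(F)\,e(TM/F)\,{\rm ch}(\phi(TM/F)),[M]\bigr\rangle=0
\]
for every integral polynomial $\phi$.

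The modification is the following. In (\ref{2.54}) the Clifford module $\Lambda^*(\mF_1^\perp)$ carries, through (\ref{1.39}), the chirality grading $\tau(\mF_1^\perp,g^{\mF_1^\perp})$ of (\ref{1.35}), and this is exactly what produces the factor $\widehat L(TM/F)$ in (\ref{2.55z}). Instead I would equip $\Lambda^*(\mF_1^\perp)$ with the even/odd grading $\widetilde\tau(\mF_1^\perp)$ of (\ref{1.351}), keeping the even/odd grading $\widetilde\tau(\mF_2^\perp)$ on the vertical factor, the canonical grading of $S(\mF)$, and the free twist $\phi(\mF_1^\perp)$. This alters no operator: the Clifford actions, the connection $\nabla^{\beta,\varepsilon}$, hence the deformed sub-Dirac operator (\ref{2.151x}), its square (\ref{2.37}), the commutator identity (\ref{2.36a}), the Lichnerowicz formula (\ref{1.47})/(\ref{1.49}) and its consequence (\ref{1.50}) (i.e.\ (\ref{2.39})) with $k^\mF=\pi^*k^F\geq\delta$ are all literally unchanged; only the splitting into $(\cdot)_\pm$ is different. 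Since $c(h)$ and $\widehat c(h)$ for $h\in\mF_1^\perp$, and $\widehat c(\sigma)$ for $\sigma\in\mF_2^\perp$, are all odd for the relevant even/odd gradings, the deformed operator still exchanges its $\pm$-components, and Lemma \ref{t2.1} and Lemma \ref{t4.1} — which involve only $\rho$, $\sigma$, $\nabla^{\mF_2^\perp}$ and the bound $k^\mF\geq\delta$ — carry over verbatim.

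Consequently the constructions of Sections \ref{s2.3}--\ref{s2.4} apply word for word to this regraded operator: the associated zeroth-order operator (the analogue of $P_{R,\beta,\varepsilon}$) is Fredholm, the homotopy (\ref{2.24}) stays elliptic for $0<t\leq 1$, and the proof of Proposition \ref{t2.3}, using Lemma \ref{t4.1} as before, produces $R,\beta,\varepsilon>0$ for which its index vanishes. The topological computation of that index by the Atiyah--Singer theorem is identical to the one giving (\ref{2.55z}) in Section \ref{s2.6}: the $\widehat c(\sigma)$-deformation localizes everything to $s(M)$, consuming the vertical factor $\Lambda^*(\mF_2^\perp)$, and over $s(M)$ one has $s^*\mF=F$ and $s^*\mF_1^\perp\cong TM/F$; the only difference is that, because $\Lambda^*(TM/F)$ now carries the even/odd grading, it contributes the Euler class $e(TM/F)$ rather than $\widehat L(TM/F)$, so the index equals (up to a nonzero constant) $\langle\widehat A(F)\,e(TM/F)\,{\rm ch}(\phi(TM/F)),[M]\rangle$. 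Setting this to zero for all $\phi$ gives the displayed identity, hence (\ref{2.57}) and (\ref{2.58}).

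The remaining points are handled exactly as before. If ${\rm rk}(TM/F)$ or ${\rm rk}(\mF_2^\perp)$ is odd, or $\mF_2^\perp$ is non-orientable, one first passes to a suitable product of copies of $M$ or to the double covering determined by $w_1(\mF_2^\perp)$, as at the end of Section \ref{s2.4}. Under the hypothesis of Theorem \ref{t0.2}, where only $TM$ (not $F$) is assumed spin, one makes the same regrading to the operator of Section \ref{s2.4} built from $S(\mF\oplus\mF_1^\perp)$, after the same product/covering reductions arrange that $\mF=\pi^*F$ is itself spin (equivalently, after inserting the twisting bundle $S(\mF_1^\perp)^*$ of Remark \ref{t1.4}); and in the dimensions $8k+1$ and $8k+2$ one combines this with the skew-adjoint modifications of Section \ref{s2.5}. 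I expect the main obstacle to lie in the index bookkeeping of the third paragraph: one must verify that after the $\widehat c(\sigma)$-localization it is exactly $e(TM/F)$ — with no spurious contribution from the curvature of $\mF_2^\perp$ or from the surviving $\Lambda^*$-factors — that remains, and that the regrading leaves the positive term $k^\mF/(4\beta^2)$ in (\ref{1.50}), which is the sole source of the invertibility in Lemma \ref{t4.1}, entirely intact.
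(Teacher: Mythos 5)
Your core idea is sound and in part clearly what the paper has in mind: reduce (\ref{2.57}) to $k\in\{0,1\}$ via $e(TM/F)^2=p_m(TM/F)$ and then replace the characteristic-class factor $\widehat L(TM/F)$ in (\ref{2.55z}) by $e(TM/F)$ through a different choice of $\mathbf{Z}_2$-grading, in analogy with Remark~\ref{t1.7}. For the Theorem~\ref{t0.1} hypothesis the regrading you propose — equip $\Lambda^*(\mF_1^\perp)$ with $\widetilde\tau(\mF_1^\perp)$ of (\ref{1.351}) rather than $\tau(\mF_1^\perp,g^{\mF_1^\perp})$ — works exactly as you say: $\widehat c(\sigma)$ remains odd (its oddness depends only on the $\mF_2^\perp$-grading), the Lichnerowicz formula and Lemma~\ref{t4.1} are unchanged, and after localization the graded Chern character of $S(\mF_1^\perp)^*$ contributes $e(TM/F)/\widehat A(TM/F)$ in place of $2^{q_1/2}\widehat L(TM/F)/\widehat A(TM/F)$, yielding $\langle\widehat A(F)\,e(TM/F)\,{\rm ch}(\phi(TM/F)),[M]\rangle=0$.

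The gap is in the Theorem~\ref{t0.2} case. You claim one may \emph{``arrange that $\mF=\pi^*F$ is itself spin''} by passing to products of copies of $M$ or to a double cover; this is false. The obstruction is $w_2(F)\in H^2(M;\mathbf{Z}_2)$, and neither multiplying $M$ by a spin manifold nor passing to finite covers kills a nonzero $w_2(F)$ in general (for simply connected $M$ no connected cover is available at all). The alternative phrasing ``inserting the twisting bundle $S(\mF_1^\perp)^*$'' fails for the same reason: $S(\mF_1^\perp)^*$ is only defined when $\mF_1^\perp$ is spin, which under the hypothesis of Theorem~\ref{t0.2} is equivalent to $F$ being spin. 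The fix is to keep the sub-Dirac operator on $S(\mF\oplus\mF_1^\perp)\widehat\otimes\Lambda^*(\mF_2^\perp)$ (which only needs $TM$ spin) and twist it by the \emph{globally defined} auxiliary bundle $\Lambda^*(\mF_1^\perp)\otimes\phi(\mF_1^\perp)$, where $\Lambda^*(\mF_1^\perp)$ here is a pure twisting bundle (no Clifford action) carrying the chirality grading $\tau(\mF_1^\perp,g^{\mF_1^\perp})$ of (\ref{1.33})--(\ref{1.35}). This extra grading commutes with the Clifford action, so the operator remains odd for the product grading, and the entire analysis of Sections~\ref{s2.3}--\ref{s2.4} (including Lemmas \ref{t2.1}, \ref{t4.1} and Proposition~\ref{t2.3}) goes through unchanged. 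The topological index then contains the factor ${\rm ch}\bigl(\Lambda^*_+(\mF_1^\perp)-\Lambda^*_-(\mF_1^\perp)\bigr)=2^{q_1/2}\,e(\mF_1^\perp)\,\widehat L(\mF_1^\perp)/\widehat A(\mF_1^\perp)^2$; combined with $\widehat A(M)=\widehat A(F)\widehat A(TM/F)$ this produces, after localization, $\langle\widehat A(F)\,e(TM/F)\,\bigl(\widehat L/\widehat A\bigr)(TM/F)\,{\rm ch}(\phi(TM/F)),[M]\rangle=0$, and since $\widehat L/\widehat A=\prod_j\cosh(x_j/2)$ is an invertible Pontrjagin series while ${\rm ch}(\phi)$ spans the rational Pontrjagin ring, (\ref{2.57}) for $k=1$ follows. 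Finally, the appeal to Section~\ref{s2.5} is not needed: the numbers in (\ref{2.57})--(\ref{2.58}) are rational classes of even degree (as $q_1$ must be even for $e(TM/F)$ to be rationally nontrivial), hence vanish trivially when $\dim M$ is odd, and the $\mathbf{Z}$-valued index argument of Section~\ref{s2.4} covers all even $\dim M$.
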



Under the assumption of Theorem \ref{t3.2}, if one assumes that $\dim M=6$ and ${\rm rk}(F)=4$, then by (\ref{2.57}) one gets
 \begin{align}\label{2.59}
\left\langle  e(TM/F)^3,[M]\right\rangle= 0.
\end{align}

From (\ref{2.59}), one obtains the following   partial complement to a classical result of Bott \cite[Corollary 1.7]{Bo70} which states that there is no smooth codimension two foliation on the complex projective space ${\bf C}P^{2n+1}$ with $n\geq 2$. 

\begin{cor}\label{t3.4} There is no smooth codimension two foliation of positive leafwise scalar curvature on ${\bf C}P^3$. 
\end{cor}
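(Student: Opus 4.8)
The plan is to derive a contradiction from (\ref{2.59}) using the cohomology ring of ${\bf C}P^3$, elementary obstruction theory, and the Poincar\'e--Hopf theorem. So suppose, for the sake of contradiction, that $F\subset T{\bf C}P^3$ is an integrable subbundle with ${\rm rk}(F)=4$ carrying a metric $g^F$ with $k^F>0$. First I would verify that the hypotheses needed to invoke (\ref{2.59}) are in force: ${\bf C}P^3$ is a closed spin manifold, since $c_1(T{\bf C}P^3)=4x$ is even (here $x\in H^2({\bf C}P^3;{\bf Z})$ is the positive generator), so the assumptions of Theorem \ref{t0.2} hold; moreover $TM/F$ is automatically oriented because $H^1({\bf C}P^3;{\bf Z}_2)=0$, hence $w_1(TM/F)=0$. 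Thus Theorem \ref{t3.2} applies in the form (\ref{2.59}), yielding $\langle e(TM/F)^3,[{\bf C}P^3]\rangle=0$.

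Next I would convert this into a statement about the bundle $TM/F$. Since $H^2({\bf C}P^3;{\bf Z})={\bf Z}\cdot x$ and $\langle x^3,[{\bf C}P^3]\rangle=1$, writing $e(TM/F)=m\,x$ with $m\in{\bf Z}$ gives $m^3=0$, hence $e(TM/F)=0$. As $TM/F$ is an oriented real rank-$2$ bundle, the obstructions to the existence of a nowhere-zero section --- equivalently, a section of the circle bundle $S(TM/F)\to{\bf C}P^3$ --- lie in $H^{k+1}({\bf C}P^3;\pi_k(S^1))$; the only potentially nonzero such group is $H^2({\bf C}P^3;\pi_1(S^1))=H^2({\bf C}P^3;{\bf Z})$, and the corresponding primary obstruction is precisely $e(TM/F)=0$. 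Therefore $TM/F$ admits a nowhere-vanishing section $s$. Using the splitting $T{\bf C}P^3=F\oplus TM/F$, the section $s$ then yields a nowhere-vanishing vector field on ${\bf C}P^3$.

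This is impossible, since $\chi({\bf C}P^3)=4\neq 0$, so by the Poincar\'e--Hopf theorem $T{\bf C}P^3$ has no nowhere-zero section. This contradiction completes the proof of Corollary \ref{t3.4}.

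The argument is short once (\ref{2.59}) is available, so I expect the only points that require care are the preliminary verifications that make Theorem \ref{t3.2} applicable --- namely that ${\bf C}P^3$ is spin and that $TM/F$ is orientable --- together with the degree bookkeeping implicit in (\ref{2.59}) (on a $6$-manifold, $\widehat A(F)=1+(\text{terms of degree}\geq 4)$, so $\widehat A(F)e(TM/F)^3$ reduces to $e(TM/F)^3$ in top degree). The remaining step, passing from $e(TM/F)=0$ to a nowhere-zero section, is standard; alternatively one may note that oriented real rank-$2$ bundles over any space are classified by $H^2(-;{\bf Z})$ via $BSO(2)=K({\bf Z},2)$, so $e(TM/F)=0$ forces $TM/F$ to be trivial, again producing a nowhere-zero vector field on ${\bf C}P^3$.
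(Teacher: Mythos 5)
Your proof is correct, and it fills in details that the paper leaves entirely implicit: the paper simply states that Corollary~\ref{t3.4} follows from~(\ref{2.59}) without writing out the deduction. Your verification of the hypotheses (spinness of ${\bf C}P^3$ from $c_1 = 4x$, orientability of $TM/F$ from $H^1({\bf C}P^3;{\bf Z}_2)=0$, and the degree bookkeeping reducing $\widehat A(F)e(TM/F)^3$ to $e(TM/F)^3$ in top degree) is exactly what is needed before invoking~(\ref{2.59}).

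There is a slightly more direct route to the contradiction that avoids both obstruction theory and the explicit appeal to Poincar\'e--Hopf for vector fields: since $F$ is also orientable (again by $H^1({\bf C}P^3;{\bf Z}_2)=0$), one can use the multiplicativity of Euler classes for the splitting $T{\bf C}P^3 = F\oplus TM/F$ to write $e(F)\,e(TM/F) = e(T{\bf C}P^3) = \chi({\bf C}P^3)\,x^3 = 4x^3 \neq 0$. In particular $e(TM/F)=mx$ with $m\neq 0$, hence $\langle e(TM/F)^3,[{\bf C}P^3]\rangle = m^3 \neq 0$, directly contradicting~(\ref{2.59}). This is substantively the same argument as yours --- both ultimately rest on $\chi({\bf C}P^3)\neq 0$ --- but it reverses the order of deduction, getting the nonvanishing of $e(TM/F)^3$ from $\chi\neq 0$ rather than getting a nowhere-zero vector field from the vanishing of $e(TM/F)$. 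Either form is a perfectly acceptable completion of the paper's omitted step.
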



$\ $

\noindent{\bf Acknowledgements.} The author  is indebted  to Kefeng
Liu for sharing his ideas in the joint work \cite{LZ01} and for
many related discussions.  The author is also grateful to Huitao
Feng, Xiaonan Ma and Yong Wang  for  many helpful suggestions. Last but not least, the author thanks the referees for critical reading and helpful comments and suggestions. This work was
partially supported by MOEC and NNSFC.

\def\cprime{$'$} \def\cprime{$'$}
\providecommand{\bysame}{\leavevmode\hbox to3em{\hrulefill}\thinspace}
\providecommand{\MR}{\relax\ifhmode\unskip\space\fi MR }
 
\providecommand{\MRhref}[2]{%
  \href{http://www.ams.org/mathscinet-getitem?mr=#1}{#2}
}
\providecommand{\href}[2]{#2}


\begin{thebibliography}{99}




\bibitem{AH} M.~F.~Atiyah and F. Hirzebruch, {\it Spin-manifolds and
group actions}. in {\it Essays on Topology and Related Topics},
Memoires d\'edi\'es \`a Georges de Rham (ed. A. Haefliger and R.
Narasimhan), Springer-Verlag, New York-Berlin (1970), 18-28.


\comment{

\bibitem{APS} M.~F.~Atiyah, V.~K.~Patodi and
I.~M.~Singer, {\it Spectral asymmetry and Riemannian geometry I}.
{Proc. Camb. Philos. Soc.} {\bf 77} (1975), 43-69.

}

\bibitem{ASI} M.~F.~Atiyah  and I.~M.~Singer,
{\it The index  of elliptic operators I}. Ann. of Math. 87 (1968),
484-530.

\bibitem{ASV} M.~F.~Atiyah  and I.~M.~Singer, {\it The index  of elliptic operators V}. Ann. of Math. 93 (1971),
139-149.

\bibitem{BeGeVe}
N.~Berline, E.~Getzler and M.~Vergne, \emph{Heat Kernels and {D}irac
 Operators}. Grundl. Math. Wiss. Band 298, Springer-Verlag, Berlin, 1992.



\comment{\bibitem{BoW93} B.~Boo{\ss}-Bavnbek and K.~Wojciechowski,
\emph{Elliptic boundary problems for {D}irac operators},
Mathematics: Theory \& Applications, Birkh{\"a}user Boston
  Inc., Boston, MA, 1993.

  \bibitem{BC89} J.-M. Bismut and J. Cheeger, {\it $\eta$-invariants
  and their adiabatic limits.} J. Amer. Math. Soc. 2 (1989),
  33-70.

}

\bibitem{BL91}
J.-M. Bismut and G.~Lebeau, \emph{Complex immersions and {Q}uillen metrics}.
  Inst. Hautes {\'E}tudes Sci. Publ. Math. (1991), no.~74, ii+298
  pp.

 
\bibitem{BZ93}  J.-M. Bismut and W. Zhang, {\it Real embeddings and eta invariant.} Math. Ann. 295 (1993), 661-684.

\bibitem{Bo70}
R. Bott, {\it On a topological obstruction to integrability}. in
{\it Global Analysis}. Proc. Symp. Pure Math. vol. 16, (1970),
127-131.

 \bibitem{Brav02} M.~Braverman, \emph{Index theorem for
equivariant {D}irac operators on
  non-compact manifolds}. $K$-Theory  {27} (2002), 
  61--101.

\bibitem{CCL} S. S. Chern, W. H. Chen and K. S. Lam, {\it Lectures on Differential Geometry.}  Series on University Mathematics  - Vol. 1, World Scientific, 1999. 

\bibitem{Co86}
A. Connes, \emph{Cyclic cohomology and the transverse fundamental
class of a foliation}. in {\it Geometric Methods in Operator
Algebras.} H. Araki eds., pp. 52-144, Pitman Res. Notes in Math.
Series, vol. 123, 1986.

\bibitem{CoS84}
A. Connes and G. Skandalis, {\it The longitudinal index theorem for
foliations}. Publ. Res. Inst. Math. Sci. Kyoto, 20 (1984),
1139-1183.

\comment{

\bibitem{DZ00} X. Dai and W. Zhang, {\it Real embeddings and the Atiyah-Patodi-Singer index
theorem for Dirac operators.} Asian J. Math. 4 (2000), 775-794.

}


\comment{\bibitem{Gilkey93}
 P.~Gilkey,  \emph{On the index of geometrical operators for Riemannian
manifolds with boundary}. Adv. Math. \textbf{102} (1993), no. 2,
129--183.}

\bibitem{Gr96} M. Gromov, {\it Positive curvature, macroscopic dimension, spectral gaps and higher
signatures}. in {\it Functional Analysis on the Eve of the 21st
Century}, Gindikin, Simon (ed.) et al.     Progress in Math. 132
(1996), 1-213, Birkh\"auser, Basel.

\bibitem{GL80} M. Gromov and H. B. Lawson, {\it Spin and scalar
curvature in the presence of a fundamntal group I}. Ann. of Math.
111 (1980), 209-230.

\bibitem{GL80a} M. Gromov and H. B. Lawson, {\it The classification of simply-connected manifolds of
positive scalar curvature}. Ann. of Math. 111 (1980), 423-434.

\bibitem{He} S. Helgason, {\it Differential Geometry and Symmetric
Spaces}. Academic Press, 1962.


\bibitem{HR} N. Higson and J. Roe, {\it Lectures on Noncommutative
Geometry} (2000 Clay Mathematics Institute Instructional
Symposium). Slides available at
http://www.personal.psu.edu/ndh2/math/Slides.html.   

 \comment{\bibitem{HS}
M. Hilsum and G. Skandalis, {\it Morphisms $K$-orient\'es
d'espaces de feuilles et fonctorialit\'e en th\'eorie de
Kasparov.} Ann. Scient. ENS 20 (1987), 325-390.}

 \bibitem{Hi74} N. J.  Hitchin,     {\it Harmonic spinors}.
Adv. in Math. 14 (1974), 1-55.

\bibitem{LaMi89}
H.~B. Lawson and M.-L. Michelsohn, \emph{Spin Geometry}.   Princeton Univ. Press, Princeton, NJ, 1989.

\bibitem{L63}
A. Lichnerowicz, {\it Spineurs harmoniques.} C. R. Acad. Sci. Paris,
S\'erie A, 257 (1963), 7-9.





\bibitem{LMZ01}   K. Liu, X. Ma and W. Zhang, {\it On elliptic genera and foliations}. Math. Res. Lett. 8 (2001), 361-376.

 


\bibitem{LW09}  K. Liu and Y. Wang, {\it Adiabatic limits, vanishing theorems and the noncommutative
residue}. Science in China Series A: Mathematics    52 (2009),
2699-2713.



\bibitem{LZ01}  K. Liu and W. Zhang, {\it Adiabatic limits and foliations}. Contemp. Math. 279 (2001), 195-208.

\bibitem{Lu71} G. Lusztig, {\it Novikov's higher signature and
families of elliptic operators}. J. Diff. Geom. 7 (1971), 229-256.

\bibitem{MZ12} X. Ma and W. Zhang, 
{\it Transversal index and $L^2$-index for manifolds with boundary}. in 
{\it Metric and Differential Geometry, The Jeff Cheeger Anniversary Volume}, pp. 299-315. Eds. X. Dai and X. Rong. Progress in Mathematics, vol. 297. Birkh\"auser Boston, Inc., Boston, MA. 2012. 

 


\bibitem{SY79} R. Schoen and S. T. Yau, {\it On the structure of
manifolds with positive scalar curvature}. Manuscripta Math. 28
(1979), 159-183.

\bibitem{S92} S. Stolz, {\it Simply connected manifolds of positive
scalar curvature.} Ann. of Math. 136 (1992), 511-540.
\bibitem{T}  W. P. Thurston, {\it Existence of codimension-one foliations}. Ann. of Math. 104 (1976), 249-268.
 

 

\bibitem{W83} E. Winkelnkemper, {\it The graph of a foliation}. Ann.
Global Anal. Geom. 1 (1983), 51-75.




\bibitem{Z93} W. Zhang,
{\it A proof of the mod 2 index theorem of Atiyah and Singer.} C. R. Acad. Sci. Paris, S\'erie I, 316 (1993), 277-280.

\bibitem{Z96} W. Zhang, {\it Sub-signature operator and its local index theorem}. Chinese Sci. Bull. 41 (1996), 294-295. (in Chinese)

\bibitem{Z01} W. Zhang, {\it Lectures on Chern-Weil Theory and Witten Deformations}. Nankai Tracts in Mathematics vol. 4. World
Scientific, Singapore, 2001.

\bibitem{Z04} W. Zhang, {\it Sub-signature operators, $\eta$-invariants and a Riemann-Roch theorem for flat vector bundles}. Chin. Ann. Math. 25B (2004), 7-36.

 
\bibitem{MS88}  R. J. Zimmer, {\it Positive scalar curvature along the leaves}. Appendix C in 
 {\it Global Analysis
on Foliated Spaces}.  By  C. C. Moore and C. Schochet.  MSRI Publ. Vol. 9. Springer-Verlag, 1988.

\end{thebibliography}
\end{document}